\numberwithin{equation}{section}
\renewcommand{\theo}{%
\oldtheo\hypertarget{\***@currentHref\endcsname}{}}}
 \def \leq {\leqslant}
\def \geq {\geqslant}
\def\ind#1{\lower5pt\hbox{$\scriptstyle #1$}}
\def \d {\mathrm{d}}
\def \R{\mathbb R}
\def\S{\mathbb S}
\def\N{\mathbb N}
\def\H{\mathcal H}
\def \vb {v_\star}
\def\d{\mathrm{d}}
\def\W {\mathbb{W}}
\def\Q{\mathcal{Q}}
\newtheorem{theo}{Theorem}[section]
\newtheorem{prop}{Proposition}[section]
\newtheorem{lem}{Lemma}[section]
\newtheorem{nb}{Remark}[section]
\def \leq {\leqslant}
\def \geq {\geqslant}
\newcommand{\verti}[1]{{\left\vert\kern-0.25ex\left\vert\kern-0.25ex\left\vert #1 
    \right\vert\kern-0.25ex\right\vert\kern-0.25ex\right\vert}}
\def \cR  {\mathcal{R}}
\numberwithin{equation}{section}
\begin{document}

\title{Uniform estimates on the Fisher information for solutions to Boltzmann and Landau equations}

\author{Ricardo J. {\sc Alonso}}

\address{Departamento de Matem\'{a}tica, PUC-Rio, Rua Marqu\^{e}s de S\~ao Vicente 225, Rio de Janeiro, CEP 22451-900, Brazil.} \email{ralonso@mat.puc-rio.br}

\author{V\'{e}ronique {\sc Bagland}}

\address{Universit\'{e} Clermont Auvergne, LMBP, UMR 6620 - CNRS,  Campus des C\'ezeaux, 3, place Vasarely, TSA 60026, CS 60026, F-63178 Aubi\`ere Cedex,
 France.}\email{Veronique.Bagland@math.univ-bpclermont.fr}

 \author{Bertrand {\sc Lods}}

 \address{Universit\`{a} degli
Studi di Torino \& Collegio Carlo Alberto, Department of Economics and Statistics, Corso Unione Sovietica, 218/bis, 10134 Torino, Italy.}\email{bertrand.lods@unito.it}

\maketitle

\begin{abstract} In this note we prove that, under some minimal regularity assumptions on the initial datum, solutions to the spatially homogenous Boltzmann and Landau equations for hard potentials uniformly propagate the Fisher information. The proof of such a result is based upon some explicit pointwise lower bound on solutions to Boltzmann equation and strong diffusion properties for the Landau equation. We include an application of this result related to emergence and propagation of exponential tails for the solution's gradient.  These results complement estimates provided in \cite{VLandau,villa, DVLandau, TVLandau}.\\

\noindent
\textbf{Keywords.} Boltzmann equation, Landau equation, Fisher information, propagation of regularity.\\

\noindent
\textbf{MSC.} 35Q20, 82C05, 82C22, 82C40.\\

\end{abstract}

%\tableofcontents

\section{Introduction}\noindent
The Fisher information functional was introduced in \cite{Fisher}
\begin{equation}\label{fisher}
\mathcal{I}(f):=4\int_{\R^{d}}\Big|\nabla \sqrt{f(v)}\Big|^{2}\d v
\end{equation}
as a tool in statistics and information theory. It revealed itself a very powerful tool to control regularity and rate of convergence for solutions to several partial differential equations. In particular, in the study of Fokker-Planck equation, the control of the Fisher information along the Orstein-Uhlenbeck semigroup is the key point for the exponential rate of convergence to equilibrium \cite{CT} in relative entropy terms. Variants of such an approach can be applied to deal with more general parabolic problems \cite{Mon}. For these kind of problems, the Fisher information turns out to play the role of a Lyapunov functional. 

Such techniques have also been applied in the context of general collisional kinetic equation. In particular, for the Boltzmann equation with Maxwell molecules, exploiting commutations between the Boltzmann collision operator and the Orstein-Uhlenbeck semigroup, the Fisher information serves as a Lyapunov functional for the study of the long time relaxation \cite{Toscani, CGT}.  In \cite{CC1, CC2, villa}, the Fisher information was applied for general collision kernels in relation to the entropy production bounds for the Boltzmann equation.  Later in \cite{vill}, ground breaking work related to the Cercignani's conjecture was made using the Fisher information and the ideas preceding such work. \\

The aim of the present contribution is to further investigate the properties of Fisher information along solutions to two important kinetic equations: the Boltzmann equation for hard potentials, under cut-off assumption, and the Landau equation for hard potentials. More specifically, we show here that, along solutions to Boltzmann or Landau equations for hard potentials, the Fisher information will remain uniformly bounded 
\begin{equation}\label{eq:Unif}
\sup_{t\geq0}\mathcal{I}(f(t))\leq C(f_0)<\infty\end{equation} under minimal assumption on the initial datum.  For the Boltzmann equation, this improves, under less restrictive conditions, the local in time estimate obtained in \cite{villa} which reads
\begin{equation*}
\mathcal{I}(f(t))\leq e^{c\,t}\big(2I(f_0) + c\,(1+t^{3})\big)\,,\quad \text{for some explicit }\, c>0\,.
\end{equation*}  
Notice that such a bound \eqref{eq:Unif} generalizes to hard potentials model the estimates given in \cite{CGT} relative to propagation of smoothness. For solutions to the Landau equation, it has been proved that, in the case of Maxwellian molecules, the Fisher information is nondecreasing \cite{VLandau1} as well.

As an application of the uniform propagation of the Fisher information, one can deduce that, for any $t_{0} >0$, 
\begin{equation*}
\sup_{t\geq t_0>0}\int_{ \R^{d} }\big| \nabla f(t,v) \big| e^{c\,|v|^{\gamma}} \d v \leq C(f_0,t_{0})<\infty\,,\quad \text{for some explicit }\, c>0\,,
\end{equation*}  
in a relatively simple manner (relatively to \cite{alogam} for example). The techniques to prove the bound \eqref{eq:Unif} differ completely for the study of Boltzmann and Landau equations. For the Boltzmann equation, we exploit the \textit{appearance of pointwise exponential lower bounds} for solutions obtained in \cite{pulwen} whereas, for the Landau equation, we use the instantaneous regularizing effect to control, for time $t \geq t_{0} >0$ the Fisher information by Sobolev regularity bounds while, for small time $0 < t < t_{0}$, the Fisher information is controlled thanks to \emph{new energy estimates} for solutions to the Landau equation. 

\subsection{Notations}

Let us introduce some useful notations for function spaces. For any  $p \geq1$ and $q \geq0$, we define the space $L^{p}_{q}(\R^d)$ through the norm
$$\|f\|_{L^{p}_{q}}:=\left(\int_{\R^{d}}|f(v)|^{p}\langle v\rangle^{pq}\d v\right)^{1/p},$$
i.e. $L^{p}_{q}(\R^d)=\{f\::\R^{d} \to \R\;;\,\|f\|_{L^{p}_{q}} < \infty\}$ where, for $v \in \R^{d}$, $\langle v\rangle=\sqrt{1+|v|^{2}}$. We also define,  for $k \in \N$,
$$\W^{k,p}_{q}(\R^d)=\left\{f \in L^{p}_{q}(\R^d)\;;\;\partial_{v}^{\beta}f \in L^{p}_{q}(\R^d) \:\forall |\beta| \leq k\right\}$$
with the usual norm,
$$\|f\|_{\W^{k,p}_{q}}^{p}=\sum_{|\beta| \leq k}\|\partial_{v}^{\beta}f\|_{L^{p}_{q}}^{p},$$
where, for any multi-index $\beta=(\beta_{1},\ldots,\beta_{d}) \in \N^{d}$, $|\beta|=\sum_{i=1}^{d}\beta_{i}$ and $\partial^{\beta}_{v}=\partial_{v_{1}}^{\beta_{1}}\ldots\partial_{v_{d}}^{\beta_{d}}.$
We set $H^k_q(\R^d)=\W^{k,2}_q(\R^d)$ and also define $L^1_{\log}(\R^d)$ as 
$$L^1_{\log}(\R^d)=\left\{ f\in L^1(\R^d)\;; \;\int_{\R^d}|f(v)| \, |\log(|f(v)|)|\,\d v <\infty\;\right\}.$$

\subsection{The Boltzmann equation} 

  Let us now enter into the details by considering the solution $f(t,v)$ to the Boltzmann equation
\begin{equation}\label{eq:BE} 
\partial_{t}f(t,v)=\Q(f,f)(t,v)\,,\qquad v\in\mathbb{R}^{d}\,.
\end{equation}
We consider kernels satisfying $\|b\|_{L^{1}(\mathbb{S}^{d-1})}<\infty$, thus, it is possible to write the collision operator in gain and loss operators
\begin{equation*}
\Q(f,g)=\Q^{+}(f,g)-g\,\cR(f)\,,
\end{equation*}
where the collision operator is given by
\begin{align*}
\Q^{+}(f,g)(v)&:=\int_{\S^{d-1}\times \R^{d}}b(\cos\theta)|v-v_{\star}|^{\gamma}\,f(v'_{\star})\,g(v')\d v_{\star}\d \sigma\,,\\
\cR(f)(v)&:=\int_{\S^{d-1}\times \R^{d}}b(\cos\theta)|v-v_{\star}|^{\gamma}\,f(v_{\star})\d v_{\star}\d \sigma = \|b\|_{L^{1}(\mathbb{S}^{d-1})}\big(f\ast |u|^{\gamma}\big)(v)\,.
\end{align*}
We will consider hard potentials $\gamma\in(0,1]$.  Also, for technical simplicity, we restrict ourself to $d\geq3$. 
 \begin{theo}{\textit{\textbf{(Uniform propagation of the Fisher information)}}}\phantomsection\label{main} Let $b\in L^{2}(\mathbb{S}^{d-1})$ be the angular scattering kernel, $d\geq3$ and $\gamma\in(0,1]$.  Assume also that the initial datum $f_{0}\geq0$ satisfies 
\begin{equation*}
f_{0} \in L^{1}_{\eta}(\R^{d}) \cap L^{2}_{\mu}(\R^{d}) \cap H^{\frac{(5-d)^{+}}{2}}_{\nu}(\R^{d}), 
\end{equation*}
for some ${ \nu > 3 + \gamma + \frac{d}{2} }$, ${ \mu \geq \nu + 1 + \frac{\gamma}{2}}$, ${ \eta \geq \mu + d }$ and 
$$\int_{\R^d} f_0(v)\, v\,\d v=0,\qquad\qquad \mathcal{I}(f_{0}) < \infty.$$
Then, the unique solution $f(t)\geq0$ of \eqref{eq:BE} satisfies
\begin{equation*}
\sup_{t\geq0}\,\mathcal{I}(f(t)) \leq C\,,
\end{equation*}
for some positive constant $C$ depending on $\mathcal{I}(f_{0})$ and the $L^{1}_{\eta}\cap L^{2}_{\mu} \cap H^{\frac{(5-d)^{+}}{2}}_{\nu}$-norm of $f_{0}.$
\end{theo}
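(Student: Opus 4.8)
The plan is to derive a differential inequality for the Fisher information $\mathcal I(f(t))$ along the flow of the Boltzmann equation and to close it using an \emph{a priori} pointwise lower bound on $f$. The starting point is the identity, valid along smooth solutions, of the form
\begin{equation*}
\frac{\d}{\d t}\mathcal I(f(t)) = \mathcal J(f(t)),
\end{equation*}
where $\mathcal J$ is the bilinear functional arising from differentiating $\mathcal I$ and using $\partial_t f=\Q(f,f)$. Following the computations in \cite{villa}, one splits the gain and loss contributions, $\mathcal J(f)=\mathcal J^+(f)-\mathcal J^-(f)$; the loss part, governed by $\cR(f)=\|b\|_{\L}\,(f\ast|u|^\gamma)$, produces a term that, after integration by parts, behaves like $-c\int_{\R^d}|\nabla\sqrt f|^2\,(f\ast|u|^\gamma)\,\d v$ plus lower-order commutator terms involving $\nabla(f\ast|u|^\gamma)$. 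The key structural point is that this loss term is \emph{coercive}: because hard potentials give $f\ast|u|^\gamma\geq c_0\langle v\rangle^\gamma$ once $f$ has bounded mass and energy (a standard lower bound), the dissipation dominates by a factor $\langle v\rangle^\gamma$ and in particular controls $\mathcal I(f)$ itself. The gain term $\mathcal J^+$ is the dangerous one and must be absorbed.

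First I would collect, as standing \emph{a priori} estimates from the Cauchy theory for cutoff hard potentials, the propagation of: mass, momentum ($=0$) and energy; the moments $\|f(t)\|_{L^1_\eta}$; the $L^2_\mu$ norm; and the Sobolev bound $\|f(t)\|_{H^{(5-d)^+/2}_\nu}$ — all uniform in $t\geq0$ under the hypotheses on $f_0$. These are exactly the regularity levels appearing in the theorem, and they are what makes the gain operator $\Q^+$ amenable to Sobolev-type and Cauchy–Schwarz estimates (using $b\in L^2(\S^{d-1})$, which enters via the regularizing properties of $\Q^+$). Second, I would invoke the \emph{appearance of pointwise exponential lower bounds} from \cite{pulwen}: for every $t_0>0$ there exist $a_1,a_2>0$ with $f(t,v)\geq a_1 e^{-a_2|v|^2}$ for all $t\geq t_0$; combined with propagation of a Gaussian upper bound on $f$ (again standard under these hypotheses), one controls $|\nabla\sqrt f|=\tfrac{|\nabla f|}{2\sqrt f}$ pointwise in terms of $|\nabla f|$ and a Gaussian weight, which is the mechanism by which the gain term $\mathcal J^+(f)$ — which naturally comes out in terms of $\nabla f$ rather than $\nabla\sqrt f$ — is estimated by $C\big(1+\mathcal I(f)\big)^{1-\delta}$ or by $C+\tfrac12\mathcal J^-(f)$-type quantities, i.e.\ strictly sublinearly relative to the coercive loss.

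Putting these together, the differential inequality takes the schematic form
\begin{equation*}
\frac{\d}{\d t}\mathcal I(f(t)) \leq -c_0\,\mathcal I(f(t)) + C, \qquad t\geq t_0,
\end{equation*}
with $c_0,C>0$ depending only on the listed norms of $f_0$, which by Gronwall yields $\sup_{t\geq t_0}\mathcal I(f(t))\leq \max\{\mathcal I(f(t_0)),\,C/c_0\}$. To cover the initial layer $0\le t\le t_0$ — where the lower bound of \cite{pulwen} is not yet available — I would use instead the short-time estimate of \cite{villa}, $\mathcal I(f(t))\leq e^{ct}(2\mathcal I(f_0)+c(1+t^3))$, which is finite precisely because $\mathcal I(f_0)<\infty$; this also controls $\mathcal I(f(t_0))$, closing the bound uniformly on $[0,\infty)$. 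A routine regularization/approximation argument (smoothing $f_0$, deriving the estimate for the approximants with constants depending only on the controlled norms, and passing to the limit) justifies the formal differentiation of $\mathcal I$.

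The main obstacle is the control of the gain contribution $\mathcal J^+(f)$: one must show it is genuinely lower order with respect to the coercive loss, and this is where the interplay between the pointwise lower bound (to pass from $\nabla\sqrt f$ to $\nabla f$), the $L^2$/Sobolev regularity of $f$ (to handle $\Q^+$ via its smoothing estimates, using $b\in L^2$), and the weight hierarchy $\nu<\mu<\eta$ (to absorb the polynomial loss of weights in each estimate) all have to fit together. Getting the exponents and weights to match — so that the gain is bounded by a constant plus a small fraction of the $\langle v\rangle^\gamma$-weighted dissipation — is the delicate part of the argument.
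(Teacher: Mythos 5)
Your overall architecture (coercivity from $\cR(f)\geq\kappa_0\langle v\rangle^{\gamma}$, lower bounds from \cite{pulwen}, smoothing estimates for $\Q^{+}$ via $b\in L^{2}(\S^{d-1})$, and a differential inequality $\frac{\d}{\d t}\mathcal{I}+c_0\mathcal{I}\leq C$) matches the paper's strategy, but there is a genuine gap in the mechanism by which you propose to control the gain contribution. You claim to pass from $\nabla\sqrt f$ to $\nabla f$ by invoking a \emph{pointwise Gaussian upper bound} on $f$, asserting that its propagation is ``standard under these hypotheses.'' It is not: the theorem assumes only polynomially weighted $L^{1}$, $L^{2}$ and Sobolev control of $f_{0}$, which propagates polynomial and stretched-exponential ($e^{c|v|^{\gamma}}$) moments but no pointwise Maxwellian bound; such bounds require Gaussian pointwise control of $f_{0}$ itself, absent here. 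Moreover, even granting both Gaussian bounds, the exchange $\nabla\sqrt f\leftrightarrow\nabla f$ introduces Gaussian weight factors that cannot be matched against the \emph{polynomial} weights supplied by the Sobolev estimates for $\Q^{+}$, so the absorption step you rely on does not close.

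The paper controls the gain by a different, more economical device. After computing $\frac{\d}{\d t}\mathcal{I}(f)$ (Lemma \ref{lemma1}), the gain enters through the term $\int \log f\,\Delta_{v}\Q^{+}(f,f)\,\d v$, together with a manifestly nonpositive term that is simply dropped. The lower bound of \cite{pulwen} is used in the form $|\log f(t,v)|\leq C_{\varepsilon}\big(1+\log^{+}(1/t)\big)\langle v\rangle^{2+\varepsilon}+f(t,v)$ (Theorem \ref{propo:lower}): this turns the awkward factor $\log f$ into a polynomial weight plus $f$ itself, and the resulting pairing with $\Delta_{v}\Q^{+}(f,f)$ is closed using the $H^{2}$-smoothing estimate for $\Q^{+}$ (Theorem \ref{regularite}) and the uniform $L^{1}_{\eta}\cap L^{2}_{\mu}\cap H^{s}_{\nu}$ bounds. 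No pointwise Gaussian estimate, upper or lower, is needed beyond Theorem \ref{propo:lower}. A second divergence: the paper does not split $[0,\infty)$ into an initial layer and $t\geq t_{0}$, nor does it invoke the local-in-time estimate of \cite{villa}. The constant on the right-hand side of the differential inequality is $C(f_{0})\big(1+\log^{+}(1/t)\big)$, and the key observation is that this singularity is \emph{integrable} at $t=0$; a single integration of $\frac{\d}{\d t}\mathcal{I}+\kappa_{0}\mathcal{I}\leq C(f_{0})\big(1+\log^{+}(1/t)\big)$ from $t=0$ then yields the uniform bound directly. Your two-regime strategy is not wrong in principle, but it is unnecessary and, more importantly, it does not repair the gap above.
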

\begin{nb}
For $d \geq 5,$ the result holds for $f_{0} \in L^{1}_{\eta}(\R^{d}) \cap L^{2}_{\mu}(\R^{d})$ for any $\mu\geq \nu + 1 + \gamma$ and ${ \eta \geq \mu + d }$.  Of course, $\mathcal{I}(f_0)$ must be finite and we must have $\int_{\R^d} f_0(v)\, v\,\d v=0$.
\end{nb}
\begin{nb}
If the reader is willing to accept more regularity in the initial data, say $f_0\in H^{2}_{\nu}(\mathbb{R}^{d})$ for some $\nu > \frac{d}{2}$, then Theorem \ref{main} remains valid for $b\in L^{1}(\mathbb{S}^{d-1})$ using the propagation of regularity given in \cite{alogam} and the control of the Fisher information using the $H^{2}_{\nu}(\mathbb{R}^{d})$ norm, see \cite[Lemma 1]{TVLandau}.
\end{nb}
\subsection{The Landau equation} As mentioned earlier,  we also investigate the case of solutions to the homogeneous Landau equation. Recall that such an equation reads
\begin{equation}\label{Le0}
\partial_{t}f = \Q_{L}(f,f)\,, \qquad v\in\mathbb{R}^{d}\,.
\end{equation}
The collision operator is defined as
\begin{equation}\label{Lce-1}
\Q_{L}(f,f)(v)=\nabla\cdot\int_{\R^{d}} A(v-v_\star)\Big(f(v_{\star})\nabla f(v) - f(v)\nabla f(v_{\star})\Big) \d v_{\star}\,
\end{equation}
where the matrix $A(z)=(A_{ij}(z))_{i,j=1,\ldots,d}$ is given by
\begin{equation*}
A_{ij}(z)=\left(\delta_{ij}-\frac{z_{i}z_{j}}{|z|^{2}}\right)\Phi(z), \qquad \Phi(z):=|z|^{2+\gamma}\,.
\end{equation*}
We concentrate the study in the hard potential case $\gamma\in(0,1]$.  We refer to \cite{DVLandau} for a methodical study of the Landau equation in this setting.  The Landau equation can be written in the form of a nonlinear parabolic equation:
\begin{equation}\label{Le-2}
\partial_{t}f(t,v) - \nabla\cdot\big(a(v)\nabla f(t,v)  - b(v)\,f(t,v)\big) = 0\,,
\end{equation}
where the matrix $a(v)$ and the vector $b(v)$ are given by
\begin{equation*}
a:=A * f\,,\qquad b:=\nabla\cdot A* f\,.
\end{equation*}
The minimal conditions that will be required on the initial datum $f_{0}$ are finite mass, energy and entropy
\begin{equation*}
m_0: = \int_{\R^{d}} f_0(v)\d v < +\infty \,,\quad E_0: = \int_{\R^{d}} |v|^{2}f_{0}(v)\d v < +\infty \,,\quad H_0:=\int_{\R^{d}} f_0(v)\log f_0(v)\d v < +\infty\,.  
\end{equation*}
For technical reasons, to assure conservation of energy, a moment higher than 2 is assumed as well.  In this situation, \cite[Proposition 4]{DVLandau} asserts that the equation is uniformly elliptic, that is,
\begin{equation*}%\label{eq:elliptic}
a(v) \xi \cdot \xi \geq a_0\,\langle v \rangle^{\gamma}\, |\xi|^{2}\,,\qquad \forall\; v \in \R^{d}, \; \xi \in \R^{d}
\end{equation*}
for some positive constant $a_0:=a_0(m_0,E_0,H_0).$ Under these assumptions, the Cauchy theory, including infinite regularization and moment propagation, has been developed in \cite{DVLandau,DVLandau2}.   As in the Boltzmann case, the Fisher information have been used for the analysis of convergence towards equilibrium, see for instance \cite{DVLandau2, TVLandau, VLandau}, and also for analysis of regularity, see \cite{DLandau}.  The idea is to establish an inequality of the form
\begin{equation*}
\int_{\R^{d}} |\nabla \sqrt{f}|^{2} \d v \leq C\big(\mathcal{D}(f)+1\big),
\end{equation*} 
with constant $C$ depending only on $m_0,E_0,H_0$, which are the physical conserved quantities, and where $\mathcal{D}(f)$ denotes the entropy production associated to $\Q_{L}$, i.e.
$$\mathcal{D}(f)=-\int_{\R^{d}}\Q_{L}(f,f)\log f\d v.$$ 
Since, along solutions to the Landau equation $f(t)=f(t,v)$ it holds that
\begin{equation*}
0\leq \int^{t}_0 \mathcal{D}(f(s))\d s\leq C_{\mathcal{D}}(m_0,E_0,H_0,t)\,,
\end{equation*}
such inequality leads to estimate on the \textit{time integrated} Fisher information.  Then, one uses Sobolev inequality to obtain control on the entropy or a higher norm.\\

\noindent For the Fisher information itself, at least for the hard potential case, the following result follows.
\begin{theo}\label{main2} 
Assume that the initial datum $f_0\geq0$ has finite mass $m_{0}$, energy $E_{0}$ and entropy $H_{0}$ and satisfies in addition
\begin{equation}\label{hme2}
\int_{\R^d}\langle v \rangle^{2} f_0(v)\log f_0(v) \d v <+\infty\,,\qquad \int_{\R^d}\langle v \rangle^{2 + \gamma +\epsilon} f_0(v)\d v < +\infty\,,
\end{equation}
for some $\epsilon>0$.  Assume moreover that $\mathcal{I}(f_{0}) < \infty.$ Then, there exists a weak solution $f(t)=f(t,v)$ to \eqref{Le0} with initial datum $f_0$ satisfying
\begin{equation*}
\sup_{t\geq0}\mathcal{I}(f(t)) \leq C^{0}_{F}\,,
\end{equation*}
where the constant $C^{0}_{F}$ depends on $m_0, E_0, H_{0}$, the quantities in \eqref{hme2}, and the initial Fisher information.
\end{theo}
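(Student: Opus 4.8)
\noindent\textbf{Proof strategy for Theorem~\ref{main2}.}
The plan is to split the time axis at some fixed $t_0>0$ (one may take $t_0=1$): on $[t_0,\infty)$ the Fisher information will be controlled by the uniform‑in‑time Sobolev regularity that hard potentials enjoy, while on $(0,t_0]$ it will be propagated from $\mathcal{I}(f_0)$ by a differential inequality obtained by differentiating $\mathcal{I}$ along the flow. All manipulations will first be carried out on a smooth, rapidly decaying positive solution issued from a regularised datum $f_0^n$, and the bound will be transferred to the limiting weak solution at the very end by lower semicontinuity. Note first that \eqref{hme2} provides a moment of order $2+\gamma+\epsilon>2$, hence conservation of energy and the uniform ellipticity $a(v)\xi\cdot\xi\ge a_0\langle v\rangle^\gamma|\xi|^2$ recalled above, so the Cauchy and regularity theory of \cite{DVLandau,DVLandau2} applies.

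\noindent\textbf{Long times.} For $t\ge t_0$ the solution is $C^\infty$ and rapidly decaying, and the appearance‑of‑regularity estimates for hard potentials give $\sup_{t\ge t_0}\|f(t)\|_{H^2_\nu(\R^d)}=:M_{t_0}<\infty$ for any $\nu>d/2$, with $M_{t_0}$ depending only on $m_0,E_0,H_0$ and the quantities in \eqref{hme2} (not on the regularity of $f_0$). The functional inequality bounding the Fisher information by a weighted $H^2$ norm (\cite[Lemma 1]{TVLandau}) then yields
\begin{equation*}
\sup_{t\ge t_0}\mathcal{I}(f(t))\le \Phi\big(M_{t_0}\big)<\infty .
\end{equation*}

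\noindent\textbf{Short times: the energy estimate.} Writing the equation in the parabolic form \eqref{Le-2}, $\partial_t f=\nabla\cdot(a\nabla f-bf)$ with $a=A*f$, $b=\nabla\cdot A*f$, and using $\mathcal{I}(f)=4\int_{\R^d}|\nabla\sqrt f|^2\,\d v$, one differentiates along the smooth positive solution and integrates by parts twice to reach an identity of the schematic form $\frac{\d}{\d t}\mathcal{I}(f(t))=-\mathcal{G}(f(t))+\mathcal{R}(f(t))$, where $\mathcal{G}(f)\ge0$ is the top‑order dissipation built from the diffusion matrix $a$ — it pairs $a$ with a quadratic expression in the second derivatives of $\log f$ and is coercive because $a\ge a_0\langle v\rangle^\gamma$ — and $\mathcal{R}(f)$ gathers the remaining terms, involving $\nabla a=(\nabla A)*f$, $D^2a=(D^2A)*f$, $b$ and $\nabla b$. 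Since $A,\nabla A,D^2A$ grow at most like $\langle z\rangle^{2+\gamma},\langle z\rangle^{1+\gamma},\langle z\rangle^{\gamma}$ and the moment of order $2+\gamma+\epsilon$ of $f(t)$ stays bounded on $[0,t_0]$ by \eqref{hme2}, these coefficients are bounded by $C\langle v\rangle^{2+\gamma}$, etc.; estimating $\mathcal{R}$ by Cauchy--Schwarz and Young's inequality one absorbs the genuinely top‑order contributions into $\mathcal{G}(f(t))$ and dominates the rest by $C_1\mathcal{I}(f(t))+C_2$, the lower‑order leftovers being controlled by the mass, the energy and the weighted entropy $\int_{\R^d}\langle v\rangle^2 f|\log f|\,\d v$ (bounded on $[0,t_0]$ by the first condition in \eqref{hme2} and propagation of weighted entropy). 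This gives $\frac{\d}{\d t}\mathcal{I}(f(t))\le C_1\mathcal{I}(f(t))+C_2$ on $(0,t_0]$ with $C_1,C_2$ depending only on $m_0,E_0,H_0$ and the quantities in \eqref{hme2}, whence, by Grönwall and $\mathcal{I}(f(0))=\mathcal{I}(f_0^n)$, $\mathcal{I}(f(t))\le e^{C_1t_0}\big(\mathcal{I}(f_0^n)+C_2t_0\big)$ for $t\in(0,t_0]$.

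\noindent\textbf{Conclusion and main difficulty.} Combining the two régimes, the regularised solutions satisfy $\sup_{t\ge0}\mathcal{I}(f^n(t))\le\Phi(M_{t_0})+e^{C_1t_0}(\mathcal{I}(f_0^n)+C_2t_0)$, a bound uniform in $n$ once $f_0^n\to f_0$ is chosen with $\mathcal{I}(f_0^n)\to\mathcal{I}(f_0)$ and with the quantities in \eqref{hme2} controlled uniformly. By the stability of weak solutions to \eqref{Le0} (\cite{DVLandau}) a subsequence converges to a weak solution $f$ with datum $f_0$; since $f\mapsto\int_{\R^d}\frac{|\nabla f|^2}{f}\,\d v$ is convex, hence weakly lower semicontinuous, $\mathcal{I}(f(t))\le\liminf_n\mathcal{I}(f^n(t))\le C^0_F$ for a.e.\ $t$, hence for all $t\ge0$ after choosing the good representative, with $C^0_F$ depending only on the advertised quantities. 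The delicate point is the short‑time estimate: performing the double integration by parts, isolating the sign‑definite dissipation $\mathcal{G}$ via the structural relation $A(z)z=0$, and then dominating every remainder — above all the top‑order ones involving $D^2a$ and $\nabla b$ — by $\mathcal{G}(f(t))+C(1+\mathcal{I}(f(t)))$ using only the moment of order $2+\gamma+\epsilon$ and the weighted‑entropy bound from \eqref{hme2}; a secondary, more routine, layer is to make these computations rigorous on the approximating sequence and to confirm that the long‑time constant is independent of the regularity of $f_0$, which rests on the estimates of \cite{DVLandau2}.
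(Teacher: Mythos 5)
Your long‑time argument (Sobolev bound from \cite{DVLandau} plus the inequality $\mathcal{I}(f)\lesssim\|f\|_{H^{2}_{(d+1)/2}}$ from \cite[Lemma~1]{TVLandau}) is exactly the paper's Lemma~\ref{lem:large}, and the idea of obtaining an energy estimate for $g_{i}=\partial_{v_i}\sqrt f$ by double integration by parts and then absorbing the top‑order remainder into the dissipation is also the paper's. The genuine gap is in how you close the short‑time estimate. After the Young–Cauchy--Schwarz absorption of the top‑order terms into the dissipation $\mathcal{G}$, the leftover is
\begin{equation*}
\frac{3}{2a_0}\int_{\R^{3}}\langle v\rangle^{-\gamma}\big|(\partial_{v_i}a)\,\nabla\sqrt f\big|^{2}\d v
\;\lesssim\;\int_{\R^{3}}\langle v\rangle^{\gamma+2}\big|\nabla\sqrt f\big|^{2}\d v,
\end{equation*}
because $|\partial_{v_i}a(v)|\lesssim\langle v\rangle^{\gamma+1}$. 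This is a \emph{weighted} Fisher‑type quantity, and it is \emph{not} bounded by $C_{1}\mathcal{I}(f)+C_{2}$: the weight $\langle v\rangle^{\gamma+2}$ is unbounded, and neither the moment of order $2+\gamma+\epsilon$ nor the weighted entropy in \eqref{hme2} gives, by itself, a pointwise‑in‑time comparison with $\mathcal{I}(f)$. Hence the Gr\"onwall inequality $\frac{\d}{\d t}\mathcal{I}\le C_{1}\mathcal{I}+C_{2}$ that your argument relies on simply does not follow from the computation, and the short‑time step collapses.

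The paper's resolution is precisely the extra ingredient you are missing. It does not attempt a Gr\"onwall closure in $\mathcal{I}$ alone; instead it first proves (Proposition~\ref{p1}) a \emph{time‑integrated} weighted Fisher bound
\begin{equation*}
\int_{0}^{t}\!\!\int_{\R^{3}}\langle v\rangle^{\gamma+2}\big|\nabla\sqrt{f(s,v)}\big|^{2}\d v\,\d s\le C(1+t),
\end{equation*}
obtained by multiplying the equation by $\langle v\rangle^{2}\log f$ and balancing the weighted entropy against the weighted entropy production; this is where the hypotheses \eqref{hme2} (finite weighted entropy and finite $2+\gamma+\epsilon$ moment) are actually used, together with the logarithmic‑Sobolev control of $\int\langle v\rangle^{k}f|\log f|$ (Lemma~\ref{lem:LlogL}). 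Then the differential inequality
\begin{equation*}
\frac{\d}{\d t}\mathcal{I}(f(t))\;\le\;A_0\int_{\R^{3}}\langle v\rangle^{\gamma+2}\big|\nabla\sqrt{f(t,v)}\big|^{2}\d v+C_{1}
\end{equation*}
is simply \emph{integrated in time} over $[0,1]$, and the right‑hand side is controlled by Proposition~\ref{p1}, not by Gr\"onwall. To repair your proof you must replace the Gr\"onwall step with this time‑integration argument and supply the proof of the time‑integrated weighted Fisher bound; absent that, the short‑time estimate does not close.
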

\begin{nb}
If we also assume that $f_0\in L^2_s(\R^3)$ with $s> (5\gamma+15)/2$ then there exists a unique weak solution to \eqref{Le0} with initial datum $f_0$ (see \cite[Theorem7]{DVLandau}. Consequently, Theorem \ref{main2} is valid for any weak solution to \eqref{Le0} with initial datum $f_0$.
\end{nb}
The rest of the document is divided in three sections, Section 2 is devoted to the proof of Theorem \ref{main} and Section 3 is concerned with the proof of Theorem \ref{main2}.  The final section is an Appendix where the reader will find helpful facts about Boltzman (Appendix A.) and Landau (Appendix B.) equations that will be needed along the arguments. 
\section{Proof of Theorem \ref{main}}
In order to prove Theorem \ref{main}, we consider in all this section a solution $f(t)=f(t,v)$ to the Boltzmann equation \eqref{eq:BE} that conserves mass, momentum, and energy. One has first the following lemma.
\begin{lem}\label{lemma1}
The Fisher information of $f(t,\cdot)$ satisfies
\begin{align}\label{fish}
\begin{split}
\dfrac{\d }{\d t}\mathcal{I}(f(t)) & = -2\int_{\R^{d}}\log f(t,v)\,\,\Delta_{v}\Q^{+}(f,f)(t,v)\d v - 4\int_{\R^{d}}\left|\nabla \sqrt{f(t,v)}\right|^{2}\cR(f)(t,v)\d v\\
& - 2\int_{\R^{d}} \nabla  f(t,v)  \cdot \nabla \cR(f)(t,v)\, \d v-\int_{\R^{d}}\left|\nabla \log{f(t,v)}\right|^{2}\Q^{+}(f,f)(t,v)\d v\,.
\end{split}
\end{align}
\end{lem}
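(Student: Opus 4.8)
The plan is to differentiate the Fisher information written as $\mathcal{I}(f)=4\int_{\R^{d}}\big|\nabla\sqrt{f}\big|^{2}\,\d v$, using $\partial_{t}\sqrt{f}=\partial_{t}f/(2\sqrt{f})$ together with $\partial_{t}f=\Q^{+}(f,f)-f\,\cR(f)$ from \eqref{eq:BE}. Differentiating under the integral sign and integrating by parts once in $v$,
\[
\frac{\d}{\d t}\mathcal{I}(f(t))=8\int_{\R^{d}}\nabla\sqrt{f}\cdot\nabla\!\Big(\frac{\partial_{t}f}{2\sqrt{f}}\Big)\d v=-4\int_{\R^{d}}\frac{\Delta\sqrt{f}}{\sqrt{f}}\,\partial_{t}f\,\d v .
\]
I would then use the elementary pointwise identities $4\,\Delta\sqrt{f}/\sqrt{f}=2\,\Delta\log f+|\nabla\log f|^{2}$ and $f\cdot\Delta\sqrt{f}/\sqrt{f}=\sqrt{f}\,\Delta\sqrt{f}$, valid wherever $f>0$, and split the right-hand side into a gain part and a loss part according to $\partial_{t}f=\Q^{+}(f,f)-f\,\cR(f)$.

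For the gain part, write
\[
-4\int_{\R^{d}}\frac{\Delta\sqrt{f}}{\sqrt{f}}\,\Q^{+}(f,f)\,\d v=-2\int_{\R^{d}} (\Delta\log f)\,\Q^{+}(f,f)\,\d v-\int_{\R^{d}}|\nabla\log f|^{2}\,\Q^{+}(f,f)\,\d v ;
\]
integrating by parts twice in the first integral, transferring both derivatives onto $\Q^{+}(f,f)$, turns it into $-2\int\log f\,\Delta_{v}\Q^{+}(f,f)\,\d v$, which together with the second integral gives the first and last terms of \eqref{fish}. For the loss part,
\[
4\int_{\R^{d}}\frac{\Delta\sqrt{f}}{\sqrt{f}}\,f\,\cR(f)\,\d v=4\int_{\R^{d}}\sqrt{f}\,\Delta\sqrt{f}\,\cR(f)\,\d v ;
\]
integrating by parts once and using $\sqrt{f}\,\nabla\sqrt{f}=\tfrac12\nabla f$ converts the right-hand side into $-4\int|\nabla\sqrt{f}|^{2}\cR(f)\,\d v-2\int\nabla f\cdot\nabla\cR(f)\,\d v$, which are the two middle terms of \eqref{fish}. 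Summing the four contributions yields the claimed identity.

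The only substantive work is the rigorous justification of these formal steps: differentiation under the integral sign, the integrations by parts with no boundary contribution at infinity, and the behaviour near the zero set of $f$. I would control this by invoking the propagation of weighted $L^{1}$, $L^{2}$ and Sobolev norms available for $f(t,\cdot)$ under the hypotheses of Theorem~\ref{main}, so that $f$, $\nabla f$, $\cR(f)$ and $\nabla\cR(f)$ decay and, crucially, $\Q^{+}(f,f)(t,\cdot)$ lies in a weighted Sobolev space (making $\Delta_{v}\Q^{+}(f,f)$ meaningful after the double integration by parts); and, if needed, by first carrying out the computation with $f$ replaced by $f+\delta$ and letting $\delta\downarrow0$, using $\mathcal{I}(f)<\infty$ and the pointwise exponential lower bound of \cite{pulwen} to handle $\log f$ and $|\nabla\log f|^{2}\Q^{+}(f,f)$. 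This measure-theoretic bookkeeping, rather than the algebra, is the main obstacle.
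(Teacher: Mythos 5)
Your proposal is correct and is, at bottom, the same computation as the paper's: differentiate $\mathcal{I}(f)$, substitute the gain/loss split of $\partial_t f$, and use pointwise algebraic identities among $\sqrt{f}$, $\log f$, $|\nabla\log f|^2$ together with integrations by parts. The only difference is presentational: you work with $\nabla\sqrt{f}$ and $\Delta\sqrt{f}$ in vector form and use the identity $4\Delta\sqrt{f}/\sqrt{f}=2\Delta\log f+|\nabla\log f|^2$ up front, whereas the paper does the computation componentwise with $g_i=\partial_{v_i}\sqrt{f}$, computes $\partial_t g_i$, multiplies by $g_i$, integrates, and sums over $i$. Both yield the same four terms, and both leave the rigorous justification (differentiation under the integral, vanishing boundary terms, behaviour near $\{f=0\}$) at the level of remarks, as you have done.
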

\begin{proof} One first notices that $g_{i}(t,v):=\partial_{v_i}\sqrt{f(t,v)}$ satisfies
\begin{multline*}
\partial_{t} g_{i}(t,v)=\partial_{v_i}\left(\frac{1}{2\sqrt{f(t,v)}}\Q(f,f)(t,v)\right)=-\frac{1}{2f(t,v)}g_{i}(t,v)\,\Q(f,f)(t,v)+\frac{1}{2\sqrt{f(t,v)}}\partial_{v_i}\Q(f,f)(t,v)\\
=-\frac{1}{2f(t,v)}g_{i}(t,v)\,\Q^{+}(f,f)(t,v)+\frac{1}{2}g_{i}(t,v)\cR(f)(t,v)\\
+\frac{1}{2\sqrt{f(t,v)}}\partial_{v_i}\Q^{+}(f,f)(t,v)-\frac{1}{2\sqrt{f(t,v)}}\partial_{v_i}\left(f(t,v)\cR(f)(t,v)\right).
\end{multline*}
Multiplying by $g_{i}(t,v)$ and integrating over $\R^{d}$ we get
\begin{multline*}
\dfrac{1}{2}\dfrac{\d}{\d t}\left\|g_{i}(t)\right\|_{L^2}^{2}=-\frac{1}{2}\int_{\R^{d}}\frac{g_{i}^{2}(t,v)}{f(t,v)}\,\Q^{+}(f,f)(t,v)\d v+\frac{1}{2}\int_{\R^{d}}g_{i}^{2}(t,v)\cR(f)(t,v)\d v\\
+\frac{1}{2}\int_{\R^{d}}\frac{g_{i}(t,v)}{\sqrt{f(t,v)}}\partial_{v_i}\Q^{+}(f,f)(t,v)\d v-\frac{1}{2}\int_{\R^{d}}\frac{g_{i}(t,v)}{\sqrt{f(t,v)}}\partial_{v_i}\left(f(t,v)\cR(f)(t,v)\right)\d v.
\end{multline*}
Noticing that
\begin{equation*}
\frac{g_{i}^{2}(t,v)}{f(t,v)}=\left(\frac{\partial_{v_i}f(t,v)}{2f(t,v)}\right)^{2}=\frac{1}{4}\left(\partial_{v_i}\log f(t,v)\right)^{2}\,,
\end{equation*}
and
\begin{align*}
\frac{g_{i}(t,v)}{\sqrt{f(t,v)}}\partial_{v_i}\left(f(t,v)\cR(f)(t,v)\right)&=\frac{g_{i}(t,v)\partial_{v_i}f(t,v)}{\sqrt{f(t,v)}}\cR(f)(t,v)+g_{i}(t,v)\sqrt{f(t,v)}\partial_{v_i}\cR(f)(t,v)\\
&=\frac{(\partial_{v_i}f(t,v))^{2}}{2f(t,v)}\cR(f)(t,v) +\frac{1}{2}\partial_{v_i}f(t,v)\partial_{v_i}\cR(f)(t,v)\,,
\end{align*}
we get that
\begin{align*}
\dfrac{1}{2}\dfrac{\d}{\d t}\left\|g_{i}(t)\right\|_{L^2}^{2}&= - \frac{1}{8}\int_{\R^{d}}\left(\partial_{v_i}\log f(t,v)\right)^{2}\,\Q^{+}(f,f)(t,v)\d v+\frac{1}{2}\int_{\R^{d}}g_{i}^{2}(t,v)\cR(f)(t,v)\d v\\
&\hspace{-0.3cm}+\frac{1}{4}\int_{\R^{d}} \partial_{v_i}\log f(t,v)\,\partial_{v_i}\Q^{+}(f,f)(t,v)\d v-\frac{1}{4}\int_{\R^{d}}\frac{(\partial_{v_i}f(t,v))^{2}}{f(t,v)}\cR(f)(t,v) \d v\\
&\hspace{0.3cm} - \frac{1}{4}\int_{\R^{d}}\partial_{v_i}f(t,v)\partial_{v_i}\cR(f)(t,v)\d v\,.
\end{align*}
Using an integration by part in the third integral, and since $\dfrac{(\partial_{v_i}f(t,v))^{2}}{4f(t,v)}=g_{i}^{2}(t,v)$, this results easily in
\begin{multline*}
\dfrac{\d}{\d t}\left\|g_{i}(t)\right\|_{L^2}^{2}=-\frac{1}{4}\int_{\R^{d}}\left(\partial_{v_i}\log f(t,v)\right)^{2}\,\Q^{+}(f,f)(t,v)\d v-\int_{\R^{d}}g_{i}^{2}(t,v)\cR(f)(t,v)\d v\\
-\frac{1}{2}\int_{\R^{d}} \log f(t,v)\,\partial^{2}_{ii}\Q^{+}(f,f)(t,v)\d v
-\frac{1}{2}\int_{\R^{d}}\partial_{v_i}f(t,v)\partial_{v_i}\cR(f)(t,v)\d v
\end{multline*}
which yields the desired result after adding in $i=1,2,\cdots d$.
\end{proof}
All terms in \eqref{fish} are relatively easy to estimate with exception, perhaps, of the term involving $\Delta Q^{+}(f,f)$.
\begin{lem}\label{lemma2} 
Let $f(t)\geq0$ be a sufficiently smooth solution of the Boltzmann equation.  Then, for any $\varepsilon>0$ 
\begin{equation*}
\int_{\R^{d}}\big|\log f(t,v)\big|\,\big|\Delta_{v}\Q^{+}(f,f)(t,v)\big|\d v \leq C(\varepsilon,d)\Big(c_{\varepsilon}(t) + \|f(t)\|_{L^{2}}\Big)\Big(\|f(t)\|^{2}_{H^{s}_{\eta_{1}}} + \|f(t)\|^{2}_{L^{1}_{\eta_{2}}}\Big)\,,
\end{equation*}
where $c_{\varepsilon}(t):=C_{\varepsilon}\big(1+\log^{+}(1/t)\big)$ for some universal constant $C_{\varepsilon} >0$, and
\begin{equation*}
{\eta_{1}:=\frac{6+2\gamma+d+3\,\varepsilon}{2}, \qquad \eta_{2}:=\frac{4+2\gamma+ d + 3\,\varepsilon}{2}}, \qquad s=\frac{(5-d)^{+}}{2}\leq 1\,.
\end{equation*}
\end{lem}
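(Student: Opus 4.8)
The plan is to pair $|\log f|$ with $|\Delta_{v}\Q^{+}(f,f)|$ by a weighted Cauchy--Schwarz splitting, bounding $|\log f|$ pointwise from above by $f$ plus a quadratic weight plus a purely time--dependent term, and bounding $\Q^{+}(f,f)$ in a weighted $H^{2}$ space via the smoothing properties of the gain operator. It is the degeneracy as $t\to 0$ of the available lower bound on $f$ that will produce the factor $c_{\varepsilon}(t)$.

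\medskip
\noindent\emph{Step 1 (pointwise bound on $|\log f|$).} Write $|\log f|=\log^{+}f+\log^{+}(1/f)$. From $\log^{+}x\le x$ for $x\ge0$ one gets $\log^{+}f(t,v)\le f(t,v)$. For the negative part we invoke the appearance of a Maxwellian lower bound for solutions of the Boltzmann equation with cut-off hard potentials established in \cite{pulwen} (recalled in Appendix~A): there exist $\kappa_{0},\kappa_{1},\kappa_{2}>0$, depending only on the conserved quantities and on fixed norms of $f_{0}$, such that
\begin{equation*}
f(t,v)\ \ge\ \kappa_{0}\,\min(1,t)^{\kappa_{1}}\,e^{-\kappa_{2}|v|^{2}},\qquad t>0,\ v\in\R^{d}.
\end{equation*}
Hence $\log^{+}(1/f(t,v))\le\kappa_{2}|v|^{2}+\log^{+}(1/\kappa_{0})+\kappa_{1}\log^{+}(1/t)$, so that, for a suitable $C_{\varepsilon}>0$,
\begin{equation*}
\big|\log f(t,v)\big|\ \le\ f(t,v)+C\,\langle v\rangle^{2}+c_{\varepsilon}(t),\qquad c_{\varepsilon}(t):=C_{\varepsilon}\big(1+\log^{+}(1/t)\big).
\end{equation*}

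\medskip
\noindent\emph{Step 2 (reduction to a weighted Sobolev norm of $\Q^{+}$).} Insert this bound. The $f$-contribution is estimated by Cauchy--Schwarz against $f$, giving $\|f(t)\|_{L^{2}}\,\|\Delta_{v}\Q^{+}(f,f)(t)\|_{L^{2}}$; the $c_{\varepsilon}(t)$- and $\langle v\rangle^{2}$-contributions are estimated by Cauchy--Schwarz against $\langle\cdot\rangle^{-d/2-\varepsilon}\in L^{2}(\R^{d})$ — using that $c_{\varepsilon}(t)$ is bounded below, so it absorbs the $\langle v\rangle^{2}$ part — giving $C(\varepsilon,d)\,c_{\varepsilon}(t)\,\|\langle\cdot\rangle^{\ell}\Delta_{v}\Q^{+}(f,f)(t)\|_{L^{2}}$ with $\ell:=2+\tfrac{d}{2}+\varepsilon$. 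Since $\|\langle\cdot\rangle^{\ell}\Delta_{v}g\|_{L^{2}}\le C\|g\|_{H^{2}_{\ell}}$ (each $\|\langle\cdot\rangle^{\ell}\partial^{\beta}g\|_{L^{2}}$ is literally a term of the weighted $H^{2}$ norm), we arrive at
\begin{equation*}
\int_{\R^{d}}\big|\log f\big|\,\big|\Delta_{v}\Q^{+}(f,f)\big|\,\d v\ \le\ C(\varepsilon,d)\,\big(c_{\varepsilon}(t)+\|f(t)\|_{L^{2}}\big)\,\big\|\Q^{+}(f,f)(t)\big\|_{H^{2}_{\ell}}.
\end{equation*}

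\medskip
\noindent\emph{Step 3 (regularizing estimate and conclusion).} It remains to bound $\|\Q^{+}(f,f)\|_{H^{2}_{\ell}}$. Since $b\in L^{2}(\S^{d-1})$, the gain operator gains $\tfrac{d-1}{2}$ derivatives, and in weighted form there is an estimate of the type
\begin{equation*}
\big\|\Q^{+}(g,h)\big\|_{H^{\sigma+\frac{d-1}{2}}_{\ell}}\ \le\ C\,\|g\|_{L^{1}_{\ell+\gamma+\varepsilon/2}}\,\|h\|_{H^{\sigma}_{\ell+\gamma+1+\varepsilon/2}},\qquad \sigma\ge0,
\end{equation*}
obtained by the Bouchut--Desvillettes and Lions regularization argument (recalled in Appendix~A), the weight losses accounting for the kinetic factor $|v-v_{\star}|^{\gamma}$ and for the Sobolev embeddings used in the proof. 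Taking $\sigma=s=\tfrac{(5-d)^{+}}{2}$, one has $\sigma+\tfrac{d-1}{2}\ge2$, hence $H^{\sigma+\frac{d-1}{2}}_{\ell}\hookrightarrow H^{2}_{\ell}$; applying the estimate with $g=h=f$ and Young's inequality $2ab\le a^{2}+b^{2}$ gives
\begin{equation*}
\big\|\Q^{+}(f,f)\big\|_{H^{2}_{\ell}}\ \le\ C\,\|f\|_{L^{1}_{\eta_{2}}}\,\|f\|_{H^{s}_{\eta_{1}}}\ \le\ \tfrac{C}{2}\Big(\|f\|_{H^{s}_{\eta_{1}}}^{2}+\|f\|_{L^{1}_{\eta_{2}}}^{2}\Big),
\end{equation*}
with $\eta_{1}=\ell+\gamma+1+\tfrac{\varepsilon}{2}=\tfrac{6+2\gamma+d+3\varepsilon}{2}$ and $\eta_{2}=\ell+\gamma+\tfrac{\varepsilon}{2}=\tfrac{4+2\gamma+d+3\varepsilon}{2}$. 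Combined with Step~2, this is the asserted bound (for $d\ge5$ one may take $\sigma=0$, i.e. $s=0$, so that no regularity on $f$ beyond $L^{2}$ enters, consistently with the first Remark). The main obstacle is Step~3: establishing the weighted regularizing estimate for $\Q^{+}$ with only $b\in L^{2}(\S^{d-1})$, with the sharp gain $\tfrac{d-1}{2}$ of derivatives and with the precise polynomial weights; Steps~1--2 are then routine.
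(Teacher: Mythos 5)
Your proof is correct and takes essentially the same approach as the paper's: pointwise-bound $|\log f|$ via the Pulvirenti--Wennberg lower bound, split the integral by Cauchy--Schwarz against $f$ and a polynomial weight, and invoke the weighted gain-of-regularity estimate for $\Q^{+}$ at level $s+\frac{d-1}{2}\geq 2$. The paper packages the weight bookkeeping slightly differently (it quotes the log bound as $|\log f|\leq c_{\varepsilon}(t)\langle v\rangle^{2+\varepsilon}+f$, passes the polynomial-weight piece through the interpolation $\|h\|_{L^{1}_{s}}\leq C_{\tau}\|h\|_{L^{2}_{s+\tau}}$ rather than a direct Cauchy--Schwarz with $\langle\cdot\rangle^{-d/2-\varepsilon}$, and cites Theorem~\ref{regularite} in its sum-of-products form instead of your product form plus Young), but the three key ingredients and the resulting exponents $\eta_{1},\eta_{2},s$ coincide.
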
 
\begin{proof}
Using Theorem \ref{propo:lower}, we get that
\begin{equation*}
\int_{\R^{d}}\big|\log f(t,v)\big|\,\big|\Delta_{v}\Q^{+}(f,f)(t,v)\big|\d v \leq \int_{\R^{d}}\Big(c_{\varepsilon}(t)\langle v \rangle^{2+\varepsilon} + f(t,v)\Big)\,\big|\Delta_{v}\Q^{+}(f,f)(t,v) \big|\d v\,.
\end{equation*}
Thus, 
\begin{equation*}
\int_{\R^{d}}\big|\log f(t,v)\big|\,\big|\Delta_{v}\Q^{+}(f,f)(t,v)\big|\d v \leq c_{\varepsilon}(t)\,\|\Q^{+}(f(t),f(t))\|_{\W^{2,1}_{2+\varepsilon}} + \|f(t)\|_{L^{2}}\|\Q^{+}(f(t),f(t))\|_{H^{2}}.\end{equation*}
Using the interpolation
\begin{equation*}
\|h\|_{L^{1}_{s}} \leq C_{\tau}(d)\|h\|_{L^{2}_{s+\tau}} \qquad \forall\, \tau > d/2, \quad s \in \R
\end{equation*}
for constant $C_{\tau}(d)=\|\langle \cdot\rangle^{-\tau}\|_{L^{2}}$, we get that for $\tau=\frac{d+\varepsilon}{2}$,
\begin{equation*}
\|\Q^{+}(f(t),f(t))\|_{\W^{2,1}_{2+\varepsilon}} \leq C_{\frac{d+\varepsilon}{2}}(d)\|\Q^{+}(f(t),f(t))\|_{H^{2}_{2+\frac{3\varepsilon+d}{2}}}\,.
\end{equation*}
This results in 
\begin{equation*}
\int_{\R^{d}}\big|\log f(t,v)\big|\,\big|\Delta_{v}\Q^{+}(f,f)(t,v)\big|\d v \leq C_{\frac{d+\varepsilon}{2}}(d)\Big(c_{\varepsilon}(t) + \|f(t)\|_{L^{2}}\Big)\|\Q^{+}(f(t),f(t))\|_{H^{2}_{2+\frac{3\varepsilon+d}{2}}}\,.
\end{equation*}
 Now, using Theorem \ref{regularite} we can estimate the last term and get
\begin{equation}\label{log} 
\int_{\R^{d}}\big|\log f(t,v)\big|\,\big|\Delta_{v}\Q^{+}(f,f)(t,v)\big|\d v \leq C(\varepsilon,d)\Big(c_{\varepsilon}(t) + \|f(t)\|_{L^{2}}\Big)\Big(\|f(t)\|^{2}_{H^{s}_{\eta_{1}}} + \|f(t)\|^{2}_{L^{1}_{\eta_{2}}}\Big)\,
\end{equation}
with $\eta_{1},\, \eta_{2}$ and $s$ as defined in the statement of the lemma.
\end{proof}
\begin{proof}[Proof of Theorem \ref{main}] We start with \eqref{fish} and neglect the nonpositive last term in the right side.  It follows that
\begin{align*}
\dfrac{\d }{\d t}&\mathcal{I}(f(t)) \leq -2\int_{\R^{d}}\log f(t,v)\,\,\Delta_{v}\Q^{+}(f,f)(t,v)\d v \\
& - 4\int_{\R^{d}}\left|\nabla \sqrt{f(t,v)}\right|^{2}\cR(f)(t,v)\d v
- 2\int_{\R^{d}} \nabla  f(t,v)  \cdot \nabla \cR(f)(t,v)\, \d v\,.
\end{align*}
Additionally, thanks to \eqref{eq:low}, one has $\mathcal{R}(f)(v)\geq \kappa_0 \langle v \rangle^{\gamma}$.  And due to integration by parts and \eqref{eq:laplace}
\begin{align*}
-2\int_{\R^{d}} \nabla  f(t,v)  \cdot \nabla & \cR(f)(t,v)\, \d v = 2\int_{\R^{d}}f(t,v)\,\Delta_{v} \cR(f)(t,v)\d v \\
&\leq C_{d,\gamma}\|b\|_{L^1(\S^{d-1})}\|f\|_{L^{1}}\Big(\|f\|_{L^{1}} + \|f\|_{H^{\frac{(4-d)^{+}}{2}}}\Big)\,.
\end{align*}
Therefore,
\begin{align*}
\dfrac{\d }{\d t}\mathcal{I}(f(t)) + \kappa_{0}\,\mathcal{I}(f(t)) &\leq 2\int_{\R^{d}}\big|\log f(t,v)\big|\,\big|\Delta_{v}\Q^{+}(f,f)(t,v)\big|\d v + 2\int_{\R^{d}}f(t,v)\,\Delta_{v} \cR(f)(t,v)\d v\\
&\leq C(\varepsilon,d,b)\Big(c_{\varepsilon}(t) + \|f(t)\|_{L^{2}}+ \|f(t)\|_{L^{1}} \Big)\Big(\|f(t)\|^{2}_{H^{s}_{\eta_{1}}} + \|f(t)\|^{2}_{L^{1}_{\eta_{2}}} + 1 \Big)\,,
\end{align*}
where we used, in addition to previous estimates, Lemma \ref{lemma2} for the second inequality.  Here $\eta_{1},\, \eta_{2}$, and $s$ are those defined in such lemma.\\

Under our assumptions on $f_{0}$ and for a suitable choice of $\varepsilon >0$ small enough, the $L^{1}_{\eta_{2}}$ and $H^{1}_{\eta_{1}}$ norms of $f(t)$ are uniformly bounded, see Theorems \ref{propo:mom} and \ref{propo:h1}.  Thus, we obtain that, for such choice of $\varepsilon >0$, it holds
\begin{equation*}
\dfrac{\d }{\d t}\mathcal{I}(f(t)) +\kappa_{0}\,\mathcal{I}(f(t))  \leq C(f_0)(1+\log^{+}(1/t)), \qquad t >0\,.
\end{equation*}
Using that the mapping $t\mapsto 1+\log^{+}(1/t)$ is integrable at $t=0$, a direct integration of this differential inequality implies that $\sup_{t\geq0}\mathcal{I}(f(t)) \leq \mathcal{I}(f_0) +C(f_0)<\infty$. This proves the result.
\end{proof}
A consequence of this result is the exponentially weighted generation/propagation of the solution's gradient.  Indeed, one knows thanks to \cite{cmpde} that $\|f(t)e^{c\min\{1,t\}|v|^{\gamma}}\|_{L^{1}}\leq C(f_0)$ for some sufficiently small $c>0$ and constant $C(f_0)$ depending only on mass and energy.  Then,
\begin{align*}
\int_{\mathbb{R}^{d}}\big| \nabla f(t,v) \big|  e^{\frac{c}{2}\min\{1,t\}|v|^{\gamma}} \text{d}v &= 2\int_{\mathbb{R}^{d}}\big| \nabla \sqrt{f} \big|  \sqrt{f}\,e^{\frac{c}{2}\min\{1,t\}|v|^{\gamma}} \text{d}v \\
&\leq \mathcal{I}(f(t))^{\frac{1}{2}} \big\|f(t)e^{c\min\{1,t\}|v|^{\gamma}}\big\|^{\frac{1}{2}}_{L^{1}}\leq C(f_0)\,.
\end{align*}

\section{Proof of Theorem \ref{main2}}

In this section, we prove the uniform in time estimate on the Fisher information for solution to the Landau equation.  The strong diffusion properties of Landau make the Fisher information more suited to this equation than to Boltzmann.

\medskip

\noindent We assume in all this section that $f(t)=f(t,v)$ is a solution to \eqref{Lce-1} with initial datum $f_{0}(v)$ with mass $m_0$, energy $E_0$.  We also assume that $f_0$ has finite entropy $H_0$. We shall exploit the parabolic form of the Landau equation that we recall here again 
\begin{equation}\label{e1}
\partial_{t}f - \nabla\cdot(a\nabla f) + \nabla\cdot(b f)=0\,,
\end{equation}
for $a:=a(v)$ symmetric positive definite matrix and $b:=b(v)$ vector. Recall that, according to \eqref{eq:elliptic}, the matrix $a=a(t,v)$ is uniformly elliptic, i.e.
\begin{equation*}
a(t,v)\xi \cdot \xi \geq a_{0}\langle v\rangle^{\gamma}|\xi|^{2}, \qquad \forall\; v \in \R^{3},\:\xi \in \R^{3},\;t \geq 0\,.
\end{equation*} 
Multiplying the equation by $\log f$ and integrating
\begin{equation*}
\frac{\text{d}}{\text{d}t}\int_{\R^3}f\log f \d v+ \int_{\R^3}a\nabla f \cdot \frac{\nabla f}{f} \d v+ \int_{\R^3}(\nabla\cdot b)\,f\d v=0 \,.
\end{equation*}
We recall, see \eqref{eq:bc}, that
\begin{equation*}
\big|(\nabla\cdot b)(v)\big|\leq B(m_0,E_0)\langle v \rangle^{\gamma}\,, 
\end{equation*}
and, using \eqref{eq:elliptic}
\begin{equation*}
\int_{\R^3}a\nabla f \cdot \frac{\nabla f}{f} \d v \geq a_{0}\int_{\R^3}\langle v \rangle^{\gamma}\,\nabla f \cdot \frac{\nabla f}{f} \d v = 4 a_0\int_{\R^3}\langle v \rangle^{\gamma}\, \big|\nabla\sqrt{f}\big|^{2}\,\d v. 
\end{equation*}
As a consequence,
\begin{equation*}
\frac{\text{d}}{\text{d}t}\int_{\R^3}f\log f \d v +  4 a_0\int_{\R^3}\langle v \rangle^{\gamma}\,\big|\nabla\sqrt{f}\big|^{2}  \d v\leq \tilde{B}(m_0,E_0)\,.
\end{equation*}
Integrating in time
\begin{equation*}
4 a_0\int^{t}_{0}\d s\int_{\R^3}\langle v \rangle^{\gamma}\,\big|\nabla\sqrt{f(s,v)}\big|^{2}\d v \leq  \int_{\R^3}f_0\log f_0 \d v- \int_{\R^3}f(t,v)\log f(t,v)\d v + t\,\tilde{B}(m_0,E_0)\,,\quad t>0\,.
\end{equation*}
Since
\begin{equation*}
\sup_{t\geq0}\Big|\int_{\R^3}f(t,v)\log f(t,v)\d v\Big|\leq H(m_0,E_0,H_0)\,,
\end{equation*}
we just proved the first part of the following proposition.
\begin{prop}\label{p1}
For a solution $f(t)=f(t,v)$ to the Landau equation one has
\begin{equation}\label{eq:fishnoweight}
4\int^{t}_{0}\d s\int_{\R^3}\langle v \rangle^{\gamma}\,\big|\nabla\sqrt{f(s,v)}\big|^{2}\d v \leq C(m_0,E_0,H_{0})\big(1+t\big),\qquad t>0\,.
\end{equation} 
Moreover, given $k>0$ and $\epsilon>0$, if we assume the initial datum $f_0$ to be such that
\begin{equation}\label{hme}
\int_{\R^3}\langle v \rangle^{k} f_0(v)\log f_0(v) \d v <+\infty\,,\qquad \int_{\R^3}\langle v \rangle^{k + \gamma+\epsilon} f_0(v)\d v < +\infty\,,
\end{equation}
then
\begin{equation*}
4\int^{t}_{0}\d s\int_{\R^3}\langle v \rangle^{k+\gamma}\,\big|\nabla\sqrt{f(s,v)}\big|^{2}\d v \leq C_{k}(m_0,E_0,H_{0})\big(1+t\big),\qquad t>0\,,
\end{equation*} 
for some positive constant $C_{k}$ depending on the mass $m_{0}$, the energy $E_{0}$, the entropy $H_{0}$ and the quantities \eqref{hme}.
\end{prop}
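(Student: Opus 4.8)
The plan is to reproduce the computation that gave \eqref{eq:fishnoweight}, but this time testing the parabolic form \eqref{e1} against $\langle v\rangle^{k}\log f$ rather than against $\log f$. Writing $w:=\langle v\rangle^{k}$ and using $\nabla(w\log f)=w\,\nabla f/f+\log f\,\nabla w$ together with the symmetry of $a$, integrations by parts give
\begin{multline*}
\frac{\d}{\d t}\int_{\R^3}w\,(f\log f-f)\,\d v+\int_{\R^3}w\,\frac{(a\nabla f)\cdot\nabla f}{f}\,\d v\\
=-\int_{\R^3}\log f\,(a\nabla w)\cdot\nabla f\,\d v+\int_{\R^3}\big(w\,b\cdot\nabla f+f\log f\,(b\cdot\nabla w)\big)\,\d v\,.
\end{multline*}
By the ellipticity \eqref{eq:elliptic} the second term on the left is at least $4a_{0}\int_{\R^3}\langle v\rangle^{k+\gamma}|\nabla\sqrt f|^{2}\,\d v$, which is precisely the quantity we want to estimate, so the remaining task is to show that the right-hand side is, up to a time-integrable error, harmless.

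The key manipulation is for the cross term: writing $\log f\,\nabla f=\nabla(f\log f-f)$ and integrating by parts once more gives
\begin{equation*}
-\int_{\R^3}\log f\,(a\nabla w)\cdot\nabla f\,\d v=\int_{\R^3}\big(\nabla\cdot(a\nabla w)\big)\,(f\log f-f)\,\d v\,.
\end{equation*}
This step is essential: a direct pointwise estimate of $\log f\,(a\nabla w)\cdot\nabla f$ would cost the two extra velocity weights carried by $a$ and thus require a moment of order $k+2+\gamma$, which is not available from \eqref{hme}; after this integration by parts one only needs $|\nabla\cdot(a\nabla w)|\le C_{k}\langle v\rangle^{k+\gamma}$, which follows from $\nabla\cdot a=b$, the bound \eqref{eq:bc} on $\nabla\cdot b$, the standard upper bounds $|b(v)|\le C\langle v\rangle^{1+\gamma}$ and $|a(v)|\le C\langle v\rangle^{2+\gamma}$ (see Appendix~B; the moments they require are supplied by \eqref{hme}), together with $|\nabla w|\le C_{k}\langle v\rangle^{k-1}$ and $|D^{2}w|\le C_{k}\langle v\rangle^{k-2}$. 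The $b$-terms are handled in the same way, integrating by parts $\int_{\R^3}w\,b\cdot\nabla f\,\d v=-\int_{\R^3}f\,\nabla\cdot(wb)\,\d v$. Collecting all of this, I expect to reach
\begin{equation*}
\frac{\d}{\d t}\int_{\R^3}w\,(f\log f-f)\,\d v+4a_{0}\int_{\R^3}\langle v\rangle^{k+\gamma}\big|\nabla\sqrt f\big|^{2}\,\d v\le C_{k}\int_{\R^3}\langle v\rangle^{k+\gamma}\big(|f\log f|+f\big)\,\d v\,.
\end{equation*}

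It then remains to bound the right-hand side by $C_{k}\big(1+\mathcal{I}(f(t))\big)$ and integrate. The moment $\int_{\R^3}\langle v\rangle^{k+\gamma}f\,\d v$ stays bounded uniformly in time by the moment propagation of Appendix~B under \eqref{hme}. For $\int_{\R^3}\langle v\rangle^{k+\gamma}|f\log f|\,\d v$ I split $|f\log f|=f\log_{+}f+f\log_{+}(1/f)$: the negative part is controlled uniformly in $t$ via $s\log_{+}(1/s)\le C_{\delta}s^{1-\delta}$ and H\"older's inequality, needing only a moment of order slightly above $k+\gamma$; for the positive part, $s\log_{+}s\le C_{\delta}s^{1+\delta}$ and a H\"older interpolation give $\int_{\R^3}\langle v\rangle^{k+\gamma}f^{1+\delta}\,\d v\le\big(\int_{\R^3}\langle v\rangle^{(k+\gamma)/(1-\delta/2)}f\,\d v\big)^{1-\delta/2}\,\|f\|_{L^{3}(\R^3)}^{3\delta/2}$, and since $\mathcal{I}(f)<\infty$ gives, through the Sobolev embedding applied to $\sqrt f$, the bound $\|f\|_{L^{3}(\R^3)}\le C\,\mathcal{I}(f)$, choosing $\delta$ small enough (so that $3\delta/2\le1$ and $(k+\gamma)/(1-\delta/2)\le k+\gamma+\epsilon$) yields $\int_{\R^3}\langle v\rangle^{k+\gamma}f\log_{+}f\,\d v\le C_{\delta}\big(1+\mathcal{I}(f)\big)$. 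Integrating the differential inequality on $[0,t]$, the initial term $\int_{\R^3}w\,(f_{0}\log f_{0}-f_{0})\,\d v$ is finite by \eqref{hme}, the contribution $-\int_{\R^3}w\,(f(t)\log f(t)-f(t))\,\d v$ coming from the time-derivative term is bounded above uniformly in $t$ (using $s-s\log s\le s+C_{\delta}s^{1-\delta}$ and moments), and by \eqref{eq:fishnoweight}
\begin{equation*}
C_{k}\int_{0}^{t}\big(1+\mathcal{I}(f(s))\big)\,\d s\le C_{k}\Big(t+4\int_{0}^{t}\!\int_{\R^3}\langle v\rangle^{\gamma}\big|\nabla\sqrt{f(s,v)}\big|^{2}\,\d v\,\d s\Big)\le C_{k}(1+t)\,,
\end{equation*}
which closes the estimate. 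The main obstacle is precisely this term involving $\log f$: one must avoid spending additional velocity weights — hence the integration by parts moving the derivative onto $a\nabla w$ — and one must close the $L\log L$-type control of the weighted entropy using the already established integrated Fisher information bound \eqref{eq:fishnoweight} and the $L^{3}$ bound it implies, rather than relying on instantaneous regularization, which would not be uniform as $t\to0$. As usual, the formal computations would be justified by a regularization of the Landau equation in the spirit of \cite{DVLandau}.
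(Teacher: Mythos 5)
Your computation is correct and leads to the same differential inequality as the paper, but the crucial step — controlling $\int_{\R^3}\langle v\rangle^{k+\gamma}f|\log f|\,\d v$ — is handled by a genuinely different device. The paper invokes the weighted $L\log L$ estimate of Lemma~\ref{lem:LlogL} (specifically \eqref{eq:HHp}, built on the Euclidean logarithmic Sobolev inequality) to \emph{absorb} the entropy term into the Dirichlet form: choosing $\delta$ small enough, one gets $\int\langle v\rangle^{k+\gamma}f|\log f|\,\d v\le\delta\int\langle v\rangle^{k+\gamma}|\nabla\sqrt f|^2\,\d v + C_k$, yielding a closed Gr\"onwall-type inequality for the weighted entropy that does not reference the unweighted bound \eqref{eq:fishnoweight} at all. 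You instead bootstrap: you bound $\int\langle v\rangle^{k+\gamma}f\log_+f\,\d v$ by H\"older interpolation against $\|f\|_{L^3(\R^3)}$, use the homogeneous Sobolev embedding $\dot H^1(\R^3)\hookrightarrow L^6(\R^3)$ to get $\|f\|_{L^3}=\|\sqrt f\|_{L^6}^2\lesssim\mathcal I(f)$, and then control $\int_0^t\mathcal I(f(s))\,\d s$ by the first part \eqref{eq:fishnoweight}. Both are valid. Your route is arguably more elementary (no log-Sobolev needed) and makes transparent why the unweighted estimate is proved first, but it is dimension-specific (it relies on $d=3$ through the exponent in the Sobolev embedding) and requires \eqref{eq:fishnoweight} as an input; the paper's log-Sobolev argument is dimension-independent and closes the weighted inequality directly without the bootstrap. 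Note also that in handling $-\int\langle v\rangle^k(f\log f-f)\,\d v$ at time $t$ and the initial term, both proofs ultimately rely on the same ingredient — controlling the negative part $f\log_+(1/f)$ by moments slightly above $k$ — and you make this explicit, which is good. Finally, your observation that integrating by parts on $\log f\,\nabla f=\nabla(f\log f-f)$ is essential to avoid the extra two velocity weights carried by $a$ is exactly the paper's identity $-k\int(f\log f-f)\,\nabla\cdot\big(a\langle v\rangle^{k-2}v\big)\,\d v$, just written with $\nabla w$ rather than its explicit form.
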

\begin{proof} We already proved \eqref{eq:fishnoweight}, it remains to prove the weighted Fisher information statement.  For this, we multiply \eqref{e1} by $\langle \cdot \rangle^{\gamma}\log f(t,\cdot)$ and, integrating over $\R^{3}$ we obtain
\begin{multline*}
\dfrac{\d}{\d t}\int_{\R^{3}}\langle v\rangle^{k}f(t,v)\log f(t,v)\d v=\dfrac{\d}{\d t}\int_{\R^{3}}f(t,v)\langle v\rangle^{k}\d v - \int_{\R^{3}}\langle v\rangle^{k}\nabla \cdot (b(v)f(t,v)) \log f(t,v)\d v\\
+\int_{\R^{3}} \langle v \rangle^{k}\nabla \cdot \left(a(v)\nabla f(t,v)\right)\log f(t,v)\d v.\end{multline*} Note that integrations by parts lead to  
\begin{align*}
\int_{\R^3}- \nabla\cdot (a \nabla f)\,\langle v \rangle^{k}\log f \d v &= \int_{\R^3}\langle v \rangle^{k}a\nabla f \cdot \frac{\nabla f}{f}\d v - k\int_{\R^3}(f\log f - f) \nabla\cdot \big(a \langle v \rangle^{k-2}v\big)\d v\\
&\geq 4a_0\int_{\R^3}\langle v \rangle^{k+\gamma}\big| \nabla\sqrt{f} \big|^{2}\d v - A_0\, k\, \int_{\R^3}\langle v \rangle^{k+\gamma}\big(f |\log f| + f \big)\d v\,.
\end{align*}
The latter inequality follows by using \eqref{eq:elliptic} and the fact that
\begin{equation*}
\big| \nabla\cdot \big(a \langle v \rangle^{k-2}v\big) \big| \leq A_0\langle v \rangle^{k+\gamma}\,.
\end{equation*}
Similarly,
\begin{align*}
\bigg|\int_{\R^3}\nabla\cdot(b f)\,\langle v \rangle^{k}\log f \, \d v \,\bigg| &= \bigg| \int_{\R^3}\nabla\cdot\big(\langle v \rangle^{k}b\big)\,f \,\d v  - k\int_{\R^3}(f\log f) \, b\cdot \langle v \rangle^{k-2}v \, \d v \,\bigg|\\
&\leq B_0\int_{\R^3}\langle v \rangle^{k+\gamma}f\,  \d v + B_0\,k\,\int_{\R^3}\langle v \rangle^{k+\gamma} f |\log f| \,\d v .
\end{align*}
We control the integral with $f | \log f |$ using Lemma \ref{lem:LlogL} with $\delta >0$ small enough.  It follows that
\begin{equation}\label{eq:LlogL}
\frac{\d}{\d t}\int_{\R^3}\langle v \rangle^{k}f(t,v)\,\log f(t,v)\d v + 2a_0\|\langle v \rangle^{\frac{k+\gamma}{2}}\nabla \sqrt{f(t,v)}\|^{2}_{2} \leq\dfrac{\d}{\d t}\int_{\R^3}\langle v \rangle^{k} f(t,v)\d v + \tilde{C}_{k}\,.
\end{equation}
for some positive constant $\tilde{C}_{k}$ depending only on $\sup_{t\geq 0}\|f(t)\|_{L^{1}_{k+\gamma+\varepsilon}}$ for some arbitrary $\varepsilon >0$. Integrating between $0$ and $t$ the previous equation, we get 
\begin{multline*}
\int_{\R^3}\langle v \rangle^{k}f(t,v)\,\log f(t,v)\d v + 2a_0 \int_0^t \|\langle v \rangle^{\frac{k+\gamma}{2}}\nabla \sqrt{f(s,v)}\|^{2}_{2}\,\d s\\ \leq \int_{\R^3}\langle v \rangle^{k}f_0(v)\,\log f_0(v)\d v + \int_{\R^3}\langle v \rangle^{k} f(t,v)\d v + \tilde{C}_{k} t \,.
\end{multline*}
The first integral in the left-hand side has no sign but it can be handled thanks to \eqref{eq:Hp}. The result follows from here using propagation of the moment $k+\gamma+\varepsilon$.
\end{proof}
One notices that, for solutions of the Landau equation for hard potentials, the Fisher information emerges as soon as $t>0$. This result immediately follows from the following lemma.
\begin{lem}\label{lem:large} Let $f(t)$ be the weak solution to \eqref{Le0} with initial datum $f_{0}$ given by \cite[Theorem 5]{DVLandau}. For any $t_{0} >0$, there is $C_{t_{0}} >0$ depending only on $m_{0},E_{0}$ and $H_{0}$ such that
$$\sup_{t\geq t_{0}}\mathcal{I}(f(t)) \leq C_{t_{0}}.$$
\end{lem}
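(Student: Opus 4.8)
The plan is to bound the Fisher information $\mathcal{I}(f(t))$ by a weighted Sobolev norm of $f(t)$, and then to exploit the instantaneous smoothing of the Landau flow for hard potentials, which for every $t_{0}>0$ makes such norms uniformly bounded on $[t_{0},\infty)$ in terms of $m_{0},E_{0},H_{0}$ only. In other words, Lemma \ref{lem:large} will be deduced from a purely static functional inequality together with the regularization theory of \cite{DVLandau,DVLandau2}.

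First I would record an elementary functional inequality: for any nonnegative $g\in C^{2}(\R^{d})$ and any $v$ with $g(v)>0$,
\begin{equation*}
\frac{|\nabla g(v)|^{2}}{g(v)}\leq 2\,\|\nabla^{2}g\|_{L^{\infty}(B_{1}(v))}+4\,g(v)\,.
\end{equation*}
This follows from a one–dimensional Taylor argument along the direction of steepest descent $e:=-\nabla g(v)/|\nabla g(v)|$: with $M:=\|\nabla^{2}g\|_{L^{\infty}(B_{1}(v))}$, nonnegativity of $s\mapsto g(v+se)$ on $[0,1]$ forces $\tfrac{M}{2}s^{2}-|\nabla g(v)|\,s+g(v)\geq 0$ there, so if $|\nabla g(v)|\leq M$ the vertex of this quadratic lies in $[0,1]$ and the discriminant condition gives $|\nabla g(v)|^{2}\leq 2Mg(v)$, while if $|\nabla g(v)|>M$ evaluation at $s=1$ yields $|\nabla g(v)|\leq 2g(v)$. (This is in the spirit of \cite[Lemma 1]{TVLandau}.) Integrating in $v$, using the Sobolev embedding $H^{s}(B_{1}(v))\hookrightarrow W^{2,\infty}(B_{1}(v))$ for $s>2+\tfrac{d}{2}$ together with $\|g\|_{H^{s}(B_{1}(v))}\leq C\,\langle v\rangle^{-m}\,\|g\|_{H^{s}_{m}}$ and the integrability of $\langle v\rangle^{-m}$ for $m>d$, one obtains
\begin{equation*}
\mathcal{I}(g)=\int_{\R^{d}}\frac{|\nabla g|^{2}}{g}\,\d v\leq C_{d,s,m}\Big(\|g\|_{L^{1}}+\|g\|_{H^{s}_{m}}\Big),\qquad s>2+\tfrac{d}{2},\ m>d\,.
\end{equation*}

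Next I would apply this with $g=f(t)$ for $t\geq t_{0}$. The weak solution constructed in \cite[Theorem 5]{DVLandau} is $C^{\infty}$ for all positive times and, in the hard–potential regime, instantaneously acquires every polynomial moment; the quantitative regularization and moment estimates of \cite{DVLandau,DVLandau2} therefore give, for each $t_{0}>0$ and each $s,m\geq 0$, a bound $\sup_{t\geq t_{0}}\|f(t)\|_{H^{s}_{m}}\leq C(t_{0},s,m,m_{0},E_{0},H_{0})$. Combining this (with, say, $s=4$ and $m=4$ in dimension $d=3$) with $\|f(t)\|_{L^{1}}=m_{0}$ and the functional inequality above yields $\sup_{t\geq t_{0}}\mathcal{I}(f(t))\leq C_{t_{0}}$ with $C_{t_{0}}$ depending only on $m_{0},E_{0},H_{0}$ and $t_{0}$, which is exactly the claim.

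The crux is the functional inequality relating $\mathcal{I}$ to a weighted Sobolev norm together with the quality of the smoothing bounds. On the analytic side one only needs to check that the pointwise Taylor argument is legitimate, which it is because the regularization makes $f(t)\in C^{\infty}(\R^{d})$ with rapid decay for every $t>0$; one may also invoke \cite[Lemma 1]{TVLandau} directly to trade the exponent $s>2+\tfrac d2$ for the milder $H^{2}_{\nu}$ with $\nu>\tfrac d2$, which is immaterial here since smoothing controls all such norms anyway. The genuinely delicate point is to quote the regularization and moment estimates of \cite{DVLandau,DVLandau2} with constants depending on the conserved quantities $m_{0},E_{0}$ and on $H_{0}$ rather than on finer features of $f_{0}$ — this is precisely where the hard–potential structure, through the immediate appearance of moments and of regularity, is used.
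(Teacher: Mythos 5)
Your proposal is correct and follows essentially the same route as the paper: bound $\mathcal{I}(f(t))$ by a weighted Sobolev norm of $f(t)$ (the paper simply cites \cite[Lemma 1]{TVLandau} for $\mathcal{I}(f)\leq C\|f\|_{H^{2}_{(d+1)/2}}$, whereas you reprove a slightly cruder version via a Taylor argument before noting the citation also works), and then invoke the instantaneous smoothing of \cite[Theorem 5]{DVLandau} (Lemma \ref{lem:soboL}) to bound that Sobolev norm uniformly on $[t_{0},\infty)$ in terms of $m_{0},E_{0},H_{0},t_{0}$. The extra regularity your self-contained inequality requires ($s>2+\tfrac{d}{2}$ rather than $H^{2}$) is, as you note, immaterial because the Landau smoothing controls all such norms.
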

\begin{proof} The result is a direct consequence of the following link between the Fisher entropy and weighted Sobolev norm, see \cite[Lemma 1]{TVLandau} and \cite[Theorem 5]{DVLandau}: there is $C >0$ such that
$$\mathcal{I}(f) \leq C\,\|f\|_{H^{2}_{\frac{d+1}{2}}} \qquad \forall\, f \in H^{2}_{\frac{d+1}{2}}.$$
We conclude then with 
Lemma \ref{lem:soboL}.\end{proof}
With this result at hand, it remains to study the question about the behaviour of the Fisher information at $t=0$.  To this end, we prove the following lemma.
\begin{lem} Let $f=f(t,v)$ be a solution to \eqref{e1} with initial datum $f_{0}$ with mass $m_{0},$ energy $E_{0}$ and entropy $H_{0}$ satisfying \eqref{hme2}.
Introduce for $i=1,\ldots,d$
\begin{equation*}
g_{i}=\partial_{v_{i}}\sqrt{f}\,,\quad a^{i}=\partial_{v_{i}}a\,,\quad b^{i}=\partial_{v_{i}}b\,,\quad g:=\nabla\sqrt{f}\,.
\end{equation*}
Then, there exist $A_{0}$ and $C_{1}$ depending only on $m_{0},E_{0},H_{0}$ and the quantities \eqref{hme2} such that
\begin{multline}\label{e2}
\frac{\d}{\d t}\int_{\R^3}|g_{i}(t,v)|^{2} \d v+ a_0\int_{\R^3}\langle v \rangle^{\gamma}\Big| \nabla g_{i}(t,v) - \frac{g_{i}(t,v)}{\sqrt{f(t,v)}}\,g(t,v) \Big|^{2}\d v\\
\leq A_0\int_{\R^3}\langle v \rangle^{\gamma+2}\big|\nabla \sqrt{f(t,v)}\big |^{2}\d v + C_{1}\,.
\end{multline}
\end{lem}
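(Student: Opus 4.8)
The plan is to derive the differential inequality \eqref{e2} directly from the parabolic form \eqref{e1} of the Landau equation by writing down the evolution equation for $g_i = \partial_{v_i}\sqrt{f}$, testing against $g_i$, and carefully tracking the diffusion term so that it produces the ``good'' square $\langle v\rangle^\gamma |\nabla g_i - (g_i/\sqrt f)\,g|^2$. First I would compute $\partial_t g_i$. Since $\partial_t \sqrt f = \tfrac{1}{2\sqrt f}\partial_t f = \tfrac{1}{2\sqrt f}\big(\nabla\cdot(a\nabla f) - \nabla\cdot(bf)\big)$, differentiating in $v_i$ gives
\begin{equation*}
\partial_t g_i = \partial_{v_i}\!\left(\frac{1}{2\sqrt f}\nabla\cdot(a\nabla f)\right) - \partial_{v_i}\!\left(\frac{1}{2\sqrt f}\nabla\cdot(bf)\right).
\end{equation*}
The key algebraic observation is that $\nabla f = 2\sqrt f\, g$, so $a\nabla f = 2\sqrt f\, a g$ and one can rewrite $\tfrac{1}{2\sqrt f}\nabla\cdot(a\nabla f)$ in terms of $g$, $\nabla g$, and $a,a^i$; the point of the chosen square in \eqref{e2} is that the principal (second-order) part of $\partial_t g_i$ is essentially $\nabla\cdot\big(a(\nabla g_i - \tfrac{g_i}{\sqrt f}g)\big)$ up to lower-order terms. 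I would isolate that divergence structure so that testing against $g_i$ and integrating by parts yields $-\int a(\nabla g_i - \tfrac{g_i}{\sqrt f}g)\cdot\nabla g_i$, and then symmetrize/complete the square to obtain $-\int a(\nabla g_i - \tfrac{g_i}{\sqrt f}g)\cdot(\nabla g_i - \tfrac{g_i}{\sqrt f}g)$ plus a remainder, and finally bound below using ellipticity \eqref{eq:elliptic} by $a_0\int \langle v\rangle^\gamma|\nabla g_i - \tfrac{g_i}{\sqrt f}g|^2$.

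Next I would collect all the remainder terms and show they are dominated by $A_0\int \langle v\rangle^{\gamma+2}|\nabla\sqrt f|^2\,\d v + C_1$. These remainders are of two types: (i) terms involving one derivative of the coefficients, i.e. $a^i = \partial_{v_i}a$ and $b^i = \partial_{v_i}b$, paired with $g$ and $\nabla g$ or with $g_i\,g$; and (ii) terms involving the zeroth-order coefficient combinations $\nabla\cdot a$, $b$, $\nabla\cdot b$ paired with quadratic expressions in $g$. For these I would use the pointwise bounds on the Landau coefficients recalled in the excerpt: $|\nabla\cdot b|\le B\langle v\rangle^\gamma$, $|b|\lesssim \langle v\rangle^{\gamma+1}$, $|\nabla a|\lesssim\langle v\rangle^{\gamma+1}$, and $|a|\lesssim\langle v\rangle^{\gamma+2}$ (these follow from the structure $a = A*f$, $b=\nabla\cdot A*f$ with $A\sim |z|^{\gamma+2}$ and the finite mass/energy, exactly as used in the proof of Proposition \ref{p1} via \eqref{eq:bc} and the analogue for $\nabla\cdot(a\langle v\rangle^{k-2}v)$). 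Any genuinely quadratic-in-$g$, first-order-in-$\nabla g$ cross term such as $\int \langle v\rangle^{\gamma+1}|g|\,|\nabla g_i|$ would be absorbed by Young's inequality: a small constant times $\int\langle v\rangle^\gamma|\nabla g_i|^2$ (reabsorbed into the good square, after noting $|\nabla g_i|^2 \le 2|\nabla g_i - \tfrac{g_i}{\sqrt f}g|^2 + 2|g_i/\sqrt f|^2|g|^2$ and controlling $|g_i/\sqrt f|^2|g|^2 = \tfrac14|\partial_{v_i}\log f|^2|g|^2$ — here one must be slightly careful, see below) plus a large constant times $\int\langle v\rangle^{\gamma+2}|g|^2\,\d v$, which is the right-hand side of \eqref{e2}.

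The main obstacle I anticipate is precisely the appearance of the singular factor $1/\sqrt f$ (equivalently $\nabla\log f$) in the intermediate expressions. The miracle that makes the statement clean is that $1/\sqrt f$ occurs only inside the combination $\nabla g_i - \tfrac{g_i}{\sqrt f}g$, so at the level of \eqref{e2} there is no leftover $1/f$; but to reach that point one must check that all the $1/\sqrt f$ factors produced by differentiating $\tfrac{1}{2\sqrt f}$ and by the chain rule actually recombine into that single good square and do not leave, say, an uncontrolled $\int |\nabla\log f|^2 |g|^2\langle v\rangle^{\gamma}$ term outside it. I would handle this by doing the computation with $g_i$ throughout (rather than $f$), using systematically $\nabla f = 2\sqrt f\, g$, $\Delta\sqrt f = \nabla\cdot g$, and grouping terms before integrating by parts; a cleaner route is to note that $\nabla g_i - \tfrac{g_i}{\sqrt f}g = \sqrt f\,\nabla\big(g_i/\sqrt f\big) = \tfrac12\sqrt f\,\nabla\partial_{v_i}\log f$, which is manifestly non-singular when $f$ is smooth and positive, and to recognize the diffusion term as $a_0\int\langle v\rangle^\gamma f\,|\nabla\partial_{v_i}\log f|^2$ up to lower order. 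The remaining rigor points — justifying the integrations by parts and the pointwise lower bounds on $f$ that legitimize writing $\log f$ and $1/\sqrt f$ — are dealt with at the level of the regularized solutions from \cite[Theorem 5]{DVLandau}, so I would state the computation for smooth positive solutions and pass to the limit, exactly as in Lemma \ref{lemma1} for the Boltzmann case.
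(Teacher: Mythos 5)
Your plan follows the same route as the paper: differentiate the parabolic form \eqref{e1}, write the evolution of $g_i=\partial_{v_i}\sqrt f$, perform an $L^2$ energy estimate, invoke ellipticity \eqref{eq:elliptic}, and control the coefficient terms via $|a^i|\lesssim\langle v\rangle^{\gamma+1}$, $|b^i|\lesssim\langle v\rangle^{\gamma}$ together with Young's inequality. You also correctly identify the structural identity $\nabla g_i-\tfrac{g_i}{\sqrt f}g=\sqrt f\,\nabla(g_i/\sqrt f)=\tfrac12\sqrt f\,\nabla\partial_{v_i}\log f$, and you correctly flag that the whole lemma hinges on the singular $1/\sqrt f$ factors recombining into that one good square.

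However, the heuristic you propose to realize this — read off the principal part as $\nabla\cdot\big(a(\nabla g_i-\tfrac{g_i}{\sqrt f}g)\big)$, integrate by parts to get $-\int a(\nabla g_i-\tfrac{g_i}{\sqrt f}g)\cdot\nabla g_i$, then ``complete the square'' — does not close. Completing the square from that expression produces the leftover cross term
\begin{equation*}
-\int_{\R^3}\frac{g_i}{\sqrt f}\,g\cdot a\Big(\nabla g_i-\frac{g_i}{\sqrt f}\,g\Big)\,\d v\,,
\end{equation*}
in which $1/\sqrt f$ sits \emph{outside} the good combination. Attempting to absorb it by Young's inequality leaves $\int\langle v\rangle^{-\gamma}\tfrac{g_i^2}{f}\,|ag|^2\,\d v\lesssim\int\langle v\rangle^{\gamma+4}|\partial_{v_i}\log f|^2\,|\nabla\sqrt f|^2\,\d v$, which is not controlled by $A_0\int\langle v\rangle^{\gamma+2}|\nabla\sqrt f|^2+C_1$ — precisely the obstruction you say one ``must be slightly careful'' about. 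The paper resolves this not by isolating a divergence form of the principal part, but by expanding $-\partial_{v_i}\big(\tfrac{1}{\sqrt f}\nabla\cdot(a\nabla f)\big)$ in full, testing against $g_i$, integrating by parts, and then observing the exact algebraic identity
\begin{equation*}
a\nabla g_i\cdot\nabla g_i+\frac{g_i^2}{f}\,g\cdot ag-\frac{2g_i}{\sqrt f}\,\nabla g_i\cdot ag=\Big|\sqrt a\Big(\nabla g_i-\frac{g_i}{\sqrt f}\,g\Big)\Big|^2\,,
\end{equation*}
so no such singular cross term ever appears: the only cross terms remaining are $2\int a^i g\cdot(\nabla g_i-\tfrac{g_i}{\sqrt f}g)$ and $-\int b^i\sqrt f\cdot(\nabla g_i-\tfrac{g_i}{\sqrt f}g)$, both of which are benign because $a^i g$ and $b^i\sqrt f$ carry no $1/\sqrt f$. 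So while your overall strategy and toolbox are correct, the specific path you sketch to the good square is the one place the argument would actually fail, and the exact cancellation — the heart of the lemma — is left unverified rather than demonstrated.
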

\begin{proof}
With the notations of the lemma and recalling that $a=a(t,v)$ is symmetric, one can compute
\begin{align*}
-\partial_{v_{i}}\Big(\frac{1}{\sqrt{f}}\nabla\cdot(a\nabla f)\Big) 
& = - 2 \,\partial_{v_{i}}\Big(\frac{1}{\sqrt{f}}\nabla\cdot(a g  \sqrt{f})\Big) \\
&= - 2\,\nabla\cdot(a\nabla g_i) + 2\,\frac{g_{i}}{f}\, g \cdot ag -\frac{4}{\sqrt{f}}\,\nabla g_{i} \cdot ag - 2\nabla\cdot(a^{i}g) - \frac{2}{\sqrt{f}}\,g\cdot a^{i}g\,. 
\end{align*}
We also have
\begin{equation*}
\partial_{v_{i}}\Big(\frac{1}{\sqrt{f}} \nabla\cdot (b f) \Big) = \nabla\cdot (b\, g_{i}) + b\cdot\nabla g_{i} + \nabla\cdot(b^{i} \sqrt{f}) + b^{i}\cdot g\,.
\end{equation*}
As a consequence, after some integration by parts, the Dirichlet terms are computed as
\begin{align*}
\int_{\R^3}-\partial_{v_{i}}\Big(\frac{1}{\sqrt{f}}\nabla\cdot(a\nabla f)\Big) g_{i}\d v &= 2\int_{\R^3}\Big(a\nabla g_{i}\cdot \nabla g_{i} + \frac{g^{2}_{i}}{f}\,g\cdot ag - \frac{2g_{i}}{\sqrt{f}}\nabla g_{i}\cdot ag\Big)\d v\\
&\hspace{2cm}+2\int_{\R^3}\Big(a^{i}g\cdot\nabla g_{i} - \frac{g_{i}}{\sqrt{f}}\,g\cdot a^{i}g\Big)\d v\\
&= 2\int_{\R^3}\Big| \sqrt{a}\Big( \nabla g_{i} - \frac{g_{i}}{\sqrt{f}}\,g \Big) \Big|^{2}\d v + 2\int_{\R^3}a^{i}g \cdot \Big( \nabla g_{i} - \frac{g_{i}}{\sqrt{f}}\,g \Big)\d v\,.
\end{align*} 
Here $\sqrt{a}=\sqrt{a(t,v)}$ is the unique positive definite symmetric square root of $a(t,v)$.  In addition,
\begin{align*}
\int_{\R^3}\partial_{v_{i}}\Big(\frac{1}{\sqrt{f}} \nabla\cdot (b f) \Big)\,g_{i} \d v=  - \int_{\R^3} b^{i} \sqrt{f} \cdot \Big( \nabla g_{i} - \frac{g_{i}}{\sqrt{f}}\, g \Big)\d v\,.
\end{align*}
Consequently, we can find an energy estimate for $g_{i}$.  Indeed, multiplying the Landau equation \eqref{e1} by $1/\sqrt{f}$, differentiating in $v_{i}$, multiplying by $g_i$ and integrating in velocity, it follows that
\begin{multline*}
\frac{\text{d}}{\text{d}t}\int_{\R^3}|g_{i}(t,v)|^{2}\d v + 2\int_{\R^3}\Big| \sqrt{a(t,v)}\Big( \nabla g_{i}(t,v) - \frac{g_{i}(t,v)}{\sqrt{f(t,v)}}\,g(t,v) \Big) \Big|^{2}\d v \\
+ 2\int_{\R^3}a^{i}(t,v)g(t,v) \cdot \Big( \nabla g_{i}(t,v) - \frac{g_{i}(t,v)}{\sqrt{f(t,v)}}\,g(t,v) \Big)\d v\\
 - \int_{\R^3} \sqrt{f(t,v)}\, b^{i}(t,v)   \cdot \Big( \nabla g_{i}(t,v) - \frac{g_{i}(t,v)}{\sqrt{f(t,v)}}\, g(t,v) \Big)\d v =0 \,.
\end{multline*}
We proceed estimating each term, starting for the absorption term
\begin{align*}
\int_{\R^3}\Big| \sqrt{a}\Big( \nabla g_{i} - \frac{g_{i}}{\sqrt{f}}\,g \Big) \Big|^{2}\d v &= \int_{\R^3}a \Big( \nabla g_{i} - \frac{g_{i}}{\sqrt{f}}\,g \Big)\cdot \Big( \nabla g_{i} - \frac{g_{i}}{\sqrt{f}}\,g \Big)\d v\\
&\geq a_0\int_{\R^3}\langle v \rangle^{\gamma}\Big| \nabla g_{i} - \frac{g_{i}}{\sqrt{f}}\,g \Big|^{2}\d v\,.
\end{align*}
For the latter two terms we use Young's inequality $2|ab|\leq \epsilon a^{2} + \epsilon^{-1}b^{2}$ with $\epsilon = 2\,a_0/3$ to obtain
\begin{align*}
\frac{\d}{\d t}\int_{\R^3}|g_{i}|^{2}\d v + a_0\int_{\R^3}\langle v \rangle^{\gamma}\Big| \nabla g_{i} - \frac{g_{i}}{\sqrt{f}}\,g \Big|^{2}\d v\leq \tfrac{3}{2a_0}\int_{\R^3}\langle v \rangle^{-\gamma}\big|a^{i}\, g\big |^{2}\d v + \tfrac{3}{4a_0}\int_{\R^3}\langle v \rangle^{-\gamma}\big| b^{i} \sqrt{f} \big|^{2}\d v\,.
\end{align*}
We recall that $|b^{i}|\leq B(m_0,E_0)\langle v \rangle^{\gamma}$, therefore,
\begin{equation*}
\int_{\R^3}\langle v \rangle^{-\gamma}\big| b^{i} \sqrt{f} \big|^{2}\d v \leq B(m_0,E_0)^{2}\int_{\R^3}\langle v \rangle^{\gamma}\,f \d v\leq C_{1}(m_0,E_0)\,.
\end{equation*}
Also, $|a^{i}|\leq A(m_0,E_0)\langle v \rangle^{\gamma+1}$.  As a consequence,
\begin{equation*}
\int_{\R^3}\langle v \rangle^{-\gamma}\big|a^{i}\, g\big |^{2}\d v \leq A(m_0,E_0)\int_{\R^3}\langle v \rangle^{\gamma+2}|g|^{2}\d v = A\int_{\R^3}\langle v \rangle^{\gamma+2}\big| \nabla \sqrt{f} \big|^{2}\d v\,.
\end{equation*}
This gives the result.\end{proof}

\begin{proof}[Proof of Theorem \ref{main2}]
For short time, say $t\in [0,1]$, integrate \eqref{e2} in time and use Proposition \ref{p1} with $k=2$.  Then, we can invoke Lemma \ref{lem:large} with $t_{0}=1$ to estimate $\mathcal{I}(f(t))$ for $t \geq 1.$\end{proof}
\subsection{Exponential moments for the Landau equation}
In \cite[Section 3]{DVLandau} emergence and propagation of polynomial moments have been obtained  for the Landau equation and, more recently \cite[Section 3.2]{CLandau} develops the propagation of exponential moments for soft potentials.  The starting point is the weak formulation for the equation 
\begin{equation}\label{eme-1}
\frac{\d}{\d t}\int_{\mathbb{R}^{3}} f(t,v)\varphi(v)\,\d v = 2\sum_{j}\int_{\mathbb{R}^{3}}f(t,v)\,b_{j}\,\partial_{v_j}\varphi(v) \,\d  v + \sum_{i,j}\int_{\mathbb{R}^{3}}f(t,v)\,a_{ij}\partial^2_{v_iv_j}\varphi(v)\,\d v.
\end{equation}
Exponential moments can be easily studied in a similar fashion by choosing $\varphi(v) = e^{\lambda\langle v \rangle^{s}}$ with positive parameters $\lambda,\,s$ to be determined.  We note that, for such a choice,
\begin{equation*}
\partial_{v_j}\varphi(v) = \lambda s\,e^{\lambda\langle v \rangle^{s}}\langle v \rangle^{s-2} v_{j}\,,\qquad \partial^2_{v_iv_j}\varphi(v) = \lambda s\,e^{\lambda\langle v \rangle^{s}}\Big((s-2)\langle v \rangle^{s-4}v_{i}v_{j} + \langle v \rangle^{s-2}\delta_{ij} + \lambda s \langle v \rangle^{2(s-2)}v_{i}v_{j}\Big)\,.
\end{equation*}
Thus, resuming the computations given in \cite[pg. 201]{DVLandau} one gets
\begin{align*}
\frac{\d}{\d t}\int_{\mathbb{R}^{3}}f(t,v)e^{\lambda \langle v \rangle^{s}}\d v &= \lambda s\int_{\mathbb{R}^{3}}\int_{\mathbb{R}^{3}}f(t,v)\,f(t,v_{\star})|v-v_{\star}|^{\gamma}e^{\lambda \langle v \rangle^{s}}\langle v \rangle^{s-2}\\
& \times \Big( - 2|v|^{2} + 2|v_{\star}|^{2} + \big(|v|^{2}|v_{\star}|^{2} - (v\cdot v_{\star})^{2}\big)\big( (s-2)\langle v \rangle^{-2} + \lambda s\langle v \rangle^{s-2}\Big)\d v\,\d v_{\star}\,.
\end{align*}
At this point, we choose $0<s<2$ and thanks to the Young inequality 
$\lambda s\langle v \rangle^{s}\langle v_{\star}\rangle^{2} \leq \frac{s}{2}\langle v \rangle +\frac{2}{2-s} (\lambda s)^{\frac{2}{2-s}} \langle v_{\star}\rangle^{\frac{4}{2-s}}$,
we have
\begin{align*}
-2|v|^{2} + &2|v_{\star}|^{2} +  \big(|v|^{2}|v_{\star}|^{2} - (v\cdot v_{\star})^{2}\big)\big( (s-2)\langle v \rangle^{-2} + \lambda s\langle v \rangle^{s-2}\big) \leq   -2|v|^{2} + 2|v_{\star}|^{2} + \lambda s\langle v \rangle^{s}|v_{\star}|^{2}\\
&\leq -\frac{4-s}{2}\langle v \rangle^{2} + 2\langle v_{\star} \rangle^{2} + \frac{(2-s)}{2}(\lambda s)^{\frac{2}{2-s}}\langle v_{\star} \rangle^{\frac{4}{2-s}}\leq  - \langle v \rangle^{2} + 2\langle v_{\star} \rangle^{2} + C_{s}\lambda ^{\frac{2}{2-s}}\langle v_{\star} \rangle^{\frac{4}{2-s}}\,.
\end{align*}
Thus, using Lemma \ref{lem:momentL}, we get
\begin{multline}\label{peme-2}
\frac{\d}{\d t}\int_{\mathbb{R}^{3}}f(t,v)e^{\lambda\langle v \rangle^{s}}\d v \leq \lambda s\int_{\mathbb{R}^{3}}f(t,v)\,e^{\lambda \langle v \rangle^{s}}\langle v \rangle^{s+\gamma}\Big( - c + C\langle v \rangle^{-2}\Big)\d v\\
\leq \lambda s\int_{\mathbb{R}^{3}}f(t,v)\,e^{\lambda \langle v \rangle^{s}}\langle v \rangle^{s+\gamma}\Big( - \frac{c}{2} + \,C\,\text{1}_{\{|v|\leq r\}}\Big)\d v
\end{multline}
where $c>0$ depends on $m_0, E_0$.  Meanwhile,
\begin{equation*}\label{pemC}
C=2\,\sup_{t\geq 0}\|f(t)\|_{L^{1}_{2+\gamma}} + C_{s}\lambda ^{\frac{2}{2-s}}\sup_{t\geq 0}\| f(t) \|_{ L^{1}_{ \frac{4}{2-s} + \gamma} }\,,\quad\text{and}\quad r:=r(C,c,\gamma)\,.
\end{equation*}
This proves a propagation result for exponential moments.
\begin{prop}\label{plandau} Fix $s\in(0,\gamma]$ and assume that $f_0$ belongs to $L^{1}_{2+\gamma}(\R^{3})$.  Then, for the solution $f(t,v)$ of the Landau equation with initial datum $f_{0}$ given by \cite[Theorem 5]{DVLandau} there exists some $\beta:=\beta_{s,\gamma}\geq1$ such that
\begin{equation*}
\sup_{t\geq0}\int_{\mathbb{R}^{3}} f(t,v)e^{\min\{1,t^{\beta}\}\langle v \rangle^{s}}  \d v \leq C(f_0)\, \qquad \hfill \text{{(Emergence of tails)}}.
\end{equation*}
Fix $s\in(0,2)\,$, $\lambda >0\,$, and assume that $\int_{\mathbb{R}^{3}} f_0\, e^{\lambda \langle v \rangle^{s}}\d v<\infty$.  Then, for the solution $f(t,v)$ of the Landau equation with initial datum $f_{0}$ given by \cite[Theorem 5]{DVLandau} it follows that
\begin{equation*}
\sup_{t\geq 0}\int_{\mathbb{R}^{3}}f(t,v)e^{\lambda\langle v \rangle^{s}} \d v \leq C_{\lambda ,s}(f_0)\,\: \qquad \hfill \text{(Propagation of tails)}.
\end{equation*}
\end{prop}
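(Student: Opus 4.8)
The plan is to obtain both statements from the differential inequality \eqref{peme-2}, which we treat as given, by an elementary comparison argument for an ordinary differential inequality: the parameter $\lambda$ is kept constant for the propagation statement and is replaced by a suitable increasing function of time vanishing at $t=0$ for the emergence statement. Two technical points are settled once and for all. First, the constants $C$ and $r$ in \eqref{peme-2} involve polynomial moments of $f$; these are controlled by the moment estimates of \cite[Section 3]{DVLandau}: they are propagated on $[0,\infty)$ whenever finite initially, they appear instantaneously for hard potentials, and on $[t_{0},\infty)$ they are uniformly bounded for every $t_{0}>0$, degenerating at worst like a negative power of $t$ as $t\to0^{+}$. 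Second, \eqref{peme-2} is the rigorous outcome of inserting the truncated test functions $\varphi_{n}(v)=e^{\lambda(\langle v\rangle^{s}\wedge n)}$ into the weak formulation \eqref{eme-1} and letting $n\to\infty$ by monotone convergence; we use it freely in this form.

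\textit{Propagation.} Fix $s\in(0,2)$ and $\lambda>0$, and assume $\int_{\R^{3}}f_{0}\,e^{\lambda\langle v\rangle^{s}}\d v<\infty$. Then $f_{0}$ has finite moments of every order, hence $\sup_{t\geq0}\|f(t)\|_{L^{1}_{m}}<\infty$ for all $m$ and $C,r$ in \eqref{peme-2} are genuine constants. Writing $E(t):=\int_{\R^{3}}f(t,v)\,e^{\lambda\langle v\rangle^{s}}\d v$ and splitting the right-hand side of \eqref{peme-2} at $|v|=r$, we use $\langle v\rangle^{s+\gamma}\geq\langle r\rangle^{s+\gamma}$ on $\{|v|>r\}$ together with the mass bound $\int_{|v|\leq r}f\,e^{\lambda\langle v\rangle^{s}}\d v\leq e^{\lambda\langle r\rangle^{s}}m_{0}$ to arrive at
\begin{equation*}
\frac{\d}{\d t}E(t)\leq-\kappa\,E(t)+D,\qquad\kappa:=\tfrac{c}{2}\,\lambda\,s\,\langle r\rangle^{s+\gamma}>0,
\end{equation*}
with $D>0$ depending only on $\lambda,s,m_{0},E_{0}$ and on the uniform polynomial moment bounds. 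Gr\"onwall's lemma yields $E(t)\leq\max\{E(0),D/\kappa\}$ for every $t\geq0$, which is the claimed propagation estimate.

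\textit{Emergence.} Fix now $s\in(0,\gamma]$ and assume only $f_{0}\in L^{1}_{2+\gamma}(\R^{3})$, so that $\sup_{t\geq0}\|f(t)\|_{L^{1}_{2+\gamma}}<\infty$ while the higher moments occurring in \eqref{peme-2} are finite for $t>0$ with at worst a negative power of $t$ near the origin. Choose $\beta:=\beta_{s,\gamma}\geq1$ large enough that $\beta>s/\gamma$ and that $\beta$ dominates the negative powers of $t$ just mentioned, and set $\lambda(t):=\min\{1,t^{\beta}\}$. On $[0,1]$ one has $\lambda(t)=t^{\beta}$; put $z(t):=\int_{\R^{3}}f(t,v)\,e^{t^{\beta}\langle v\rangle^{s}}\d v$, so that
\begin{equation*}
z'(t)=\int_{\R^{3}}\partial_{t}f\,e^{t^{\beta}\langle v\rangle^{s}}\d v+\beta\,t^{\beta-1}\int_{\R^{3}}f\,\langle v\rangle^{s}e^{t^{\beta}\langle v\rangle^{s}}\d v\,.
\end{equation*}
At each fixed $t$ the first term is $\int_{\R^{3}}\partial_{t}f\,\varphi\,\d v$ with $\varphi=e^{\lambda\langle v\rangle^{s}}$ and $\lambda=t^{\beta}$, hence is bounded by the right-hand side of \eqref{peme-2} with $\lambda=t^{\beta}$. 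The new term is handled by splitting at $|v|=\rho(t)$, $\rho(t):=(\theta/t)^{1/\gamma}$ for a suitable constant $\theta=\theta(\beta,c,s)$: on $\{|v|>\rho(t)\}$ one has $\beta\,t^{\beta-1}\langle v\rangle^{s}\leq\tfrac{c}{4}\,t^{\beta}\,s\,\langle v\rangle^{s+\gamma}$, so this part is absorbed by half of the dissipative term $-\tfrac{c}{2}\,t^{\beta}\,s\int f\langle v\rangle^{s+\gamma}e^{t^{\beta}\langle v\rangle^{s}}\d v$ of \eqref{peme-2}; on $\{|v|\leq\rho(t)\}$ one has $t^{\beta}\langle v\rangle^{s}\leq C$ and $\langle v\rangle^{s}\leq C\,t^{-s/\gamma}$ — this is exactly where $s\leq\gamma$ and $\beta\geq s/\gamma$ are used — so this part is at most $C\,t^{\beta-1-s/\gamma}m_{0}$, which is integrable on $(0,1)$ since $\beta>s/\gamma$. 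The localized term $C\,t^{\beta}s\int_{|v|\leq r}f\langle v\rangle^{s+\gamma}e^{t^{\beta}\langle v\rangle^{s}}\d v$ of \eqref{peme-2} is bounded similarly, via the mass, by a time-integrable function of $t$ once $\beta$ is large enough to beat the negative powers of $t$ in $C$ and $r$. Altogether $z'(t)\leq g(t)$ for some $g\in L^{1}(0,1)$ after discarding the remaining nonpositive part of the dissipation, whence $z(t)\leq z(0)+\|g\|_{L^{1}(0,1)}=m_{0}+\|g\|_{L^{1}(0,1)}$ for all $t\in[0,1]$. In particular $\int_{\R^{3}}f(1,v)\,e^{\langle v\rangle^{s}}\d v<\infty$, and since $s\in(0,2)$ the propagation part applied with initial time $t=1$ gives $\sup_{t\geq1}\int_{\R^{3}}f(t,v)\,e^{\langle v\rangle^{s}}\d v<\infty$. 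Combining the two time ranges yields $\sup_{t\geq0}\int_{\R^{3}}f(t,v)\,e^{\min\{1,t^{\beta}\}\langle v\rangle^{s}}\d v<\infty$.

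The heart of the argument — and the only real obstacle — is the emergence statement, specifically the control of the term $\beta t^{\beta-1}\int_{\R^{3}}f\langle v\rangle^{s}e^{t^{\beta}\langle v\rangle^{s}}\d v$ generated by differentiating the time-dependent weight: one must calibrate the velocity cut $\rho(t)\asymp t^{-1/\gamma}$ and the exponent $\beta$ so that the high-velocity contribution is swallowed by the dissipation already present in \eqref{peme-2} while the low-velocity contribution produces only a time-integrable error; the hypothesis $s\leq\gamma$ is precisely what allows this to be done with $\beta\geq1$. A secondary, purely bookkeeping difficulty is that under $f_{0}\in L^{1}_{2+\gamma}$ the higher polynomial moments entering the constants of \eqref{peme-2} are not known to be finite at $t=0$, which forces one to work on $(0,1]$ with moment bounds degenerating like a negative power of $t$, absorbed by taking $\beta$ large.
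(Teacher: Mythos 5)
Your argument is correct and follows essentially the same route as the paper: both for propagation (Gr\"onwall applied to \eqref{peme-2}) and for emergence (the time-dependent weight $\lambda(t)=t^{\beta}$, with the extra term from $\tfrac{\d}{\d t}e^{t^{\beta}\langle v\rangle^{s}}$ absorbed against the dissipation on $\{|v|\gtrsim t^{-1/\gamma}\}$ and controlled by mass on the complement, using $s\leq\gamma$). The only cosmetic differences are that you split the new term at $\rho(t)\asymp t^{-1/\gamma}$ while the paper merges it with $C(t)\langle v\rangle^{-2}$ and cuts at $rt^{-\beta/\gamma}$, and that you spell out the Gr\"onwall step and the passage from $t=1$ onward that the paper leaves implicit.
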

\begin{proof}
For the emergence of the exponential tail we assume $t\in(0,1)$ and take $\varphi(t,v)=e^{t^{\beta}\langle v \rangle^{s}}$ with $s\in(0,2)$ and $\beta>0$ to be chosen.  We repeat the steps leading to estimate \eqref{peme-2} to obtain
\begin{equation}\label{peme-3}
\frac{\d}{\d t}\int_{\mathbb{R}^{3}}f(t,v)e^{t^{\beta}\langle v \rangle^{s}}\d v \leq t^{\beta}s\int_{\mathbb{R}^{3}}f(t,v)\,e^{t^{\beta}\langle v \rangle^{s}}\langle v \rangle^{s+\gamma}\Big( - c + C(t) \,\langle v \rangle^{-2} + \frac{\beta}{st}\langle v \rangle^{-\gamma}\Big)\d v\,.
\end{equation}
The constant $c>0$ depends on $m_0, E_0$ whereas $C(t)$ is given by 
$$C(t)=2\, \|f(t)\|_{L^{1}_{2+\gamma}} + C_{s}t^{\frac{2\beta}{2-s}}\| f(t) \|_{ L^{1}_{ \frac{4}{2-s} + \gamma} }\,.$$
Similarly to the Boltzmann equation, one can prove with the techniques given in \cite[Section 3]{DVLandau} that $\|f\|_{L^1_k}\lesssim t^{-k/\gamma}$.  Therefore, choosing 
\begin{equation*}
\beta = \frac{4 + (2-s)\gamma}{(4-s)\gamma} > 1\,,
\end{equation*}
we guarantee that $C(t)\lesssim t^{-\beta}$.  Thus,
\begin{equation*}
- c + C\langle v \rangle^{-2} + \frac{\beta}{st}\langle v \rangle^{-\gamma} \leq -c + \frac{C_{1}}{t^{\beta}}\langle v \rangle^{-\gamma} \leq -\frac{c}{2} + \frac{C_{1}}{t^{\beta}}\,\text{1}_{\{ |v|\leq t^{-\beta/\gamma}\,r \}}\,,
\end{equation*}
where the radius $r:=r(C_{1},c)$ is independent of time.  Therefore,
\begin{equation*}
\frac{\d}{\d t}\int_{\mathbb{R}^{3}}f(t,v)e^{t^{\beta}\langle v \rangle^{s}}\d v \leq s\,C_{1}\,e^{t^{\beta(1-s/\gamma)}\langle r\rangle^s}\int_{\mathbb{R}^{3}}f(t,v)\,\langle v \rangle^{s+\gamma}\d v\leq \tilde{C}(f_0)\,,\quad 0< s \leq \gamma\,.
\end{equation*}
This proves the generation of the exponential tail.
\end{proof}
As previously expressed for the Boltzmann equation, the propagation/generation of the Fisher information and the exponential moments imply the propagation/generation of the exponential moments for the gradient of solutions.  For any $s\in(0,\gamma]$
\begin{align*}
\int_{\mathbb{R}^{3}}\big| \nabla f(t,v) \big|  e^{\frac{\min\{1,t^{\beta}\}}{2}\langle v \rangle^{s}} \d v &= 2\int_{\mathbb{R}^{3}}\big| \nabla \sqrt{f} \big|  \sqrt{f}\,e^{\frac{\min\{1,t^{\beta}\}}{2}\langle v\rangle^{s}} \d v \\
&\leq \mathcal{I}(f(t))^{\frac{1}{2}}\big\|f(t)e^{\min\{1,t^{\beta}\}\langle v\rangle^{s}}\big\|^{\frac{1}{2}}_{L^{1}}\leq C(f_0)\,.
\end{align*}

\appendix

\section{Regularity estimates for the Boltzmann equation}
We include here some classical results in the theory of the homogeneous Boltzmann equation.  We use them in the core of this note.
\begin{theo}\label{propo:lower}
Let $b\in L^{1}(\mathbb{S}^{d-1})$ be the scattering kernel and $\gamma\in(0,1]$.  Let $0\leq f_{0}\in L^{1}_{2}(\mathbb{R}^{d})\cap L^{1}_{log}(\mathbb{R}^{d})$ be the initial data.  Then, the unique solution to \eqref{eq:BE} satisfies: for any $\varepsilon >0$ there exists $C_{\varepsilon} >0$ such that
\begin{equation*}
\left|\log f(t,v)\right|  \leq C_{\varepsilon}\left(1+\log^{+}(1/t)\right)\langle v\rangle^{2+\varepsilon} + f(t,v), \qquad v \in \R^{d}, \qquad t >0.
\end{equation*}
\end{theo}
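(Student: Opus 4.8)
The plan is to split $\log f$ according to its sign. The positive part is harmless: since $\log x\le x$ for every $x>0$, one has $\log^{+}f(t,v)\le f(t,v)$ pointwise, which is precisely the additive term $f(t,v)$ on the right-hand side. So the real content of the statement is the \emph{pointwise lower bound}
\begin{equation*}
f(t,v)\ \ge\ \exp\!\Big(-C_{\varepsilon}\big(1+\log^{+}(1/t)\big)\langle v\rangle^{2+\varepsilon}\Big)\,,\qquad v\in\R^{d}\,,\ t>0\,,
\end{equation*}
which for $t\ge1$ is a Maxwellian-type lower bound with the (slightly enlarged) exponent $2+\varepsilon$ in the spirit of Pulvirenti--Wennberg, and for $0<t<1$ reads $f(t,v)\ge t^{\,C_{\varepsilon}\langle v\rangle^{2+\varepsilon}}e^{-C_{\varepsilon}\langle v\rangle^{2+\varepsilon}}$.

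First I would pass to the mild (Duhamel) formulation
\begin{equation*}
f(t,v)=f_{0}(v)\,e^{-\int_{0}^{t}\cR(f(s))(v)\,\d s}+\int_{0}^{t}e^{-\int_{s}^{t}\cR(f(\tau))(v)\,\d\tau}\,\Q^{+}(f,f)(s,v)\,\d s\,,
\end{equation*}
in which both terms are nonnegative. Since mass and energy are conserved and $\gamma\le1$, one has $\cR(f(s))(v)=\|b\|_{L^{1}(\S^{d-1})}\big(f(s)\ast|\cdot|^{\gamma}\big)(v)\le A_{0}\langle v\rangle^{\gamma}$ with $A_{0}$ depending only on $\|b\|_{L^{1}(\S^{d-1})}$, $m_{0}$ and $E_{0}$, \emph{uniformly in} $s\ge0$; dropping the $f_{0}$ term yields
\begin{equation*}
f(t,v)\ \ge\ e^{-A_{0}t\langle v\rangle^{\gamma}}\int_{0}^{t}\Q^{+}(f,f)(s,v)\,\d s\,.
\end{equation*}
Next, from conservation of mass and energy together with the entropy bound $\int_{\R^{d}}f(t)\log f(t)\le\int_{\R^{d}}f_{0}\log f_{0}$ (which in turn makes $\int_{\R^{d}}f(t)\log^{+}f(t)$ bounded uniformly in $t$, the negative part of $v\mapsto f\log f$ being controlled by mass and energy), I would extract a \emph{time-uniform seed}: there are $R_{\ast},\delta_{\ast},\rho_{\ast}>0$, depending only on $m_{0},E_{0}$ and on $\int_{\R^{d}}f_{0}\log f_{0}$, such that
\begin{equation*}
\big|\{v\in B_{R_{\ast}}:\ f(t,v)\ge\delta_{\ast}\}\big|\ \ge\ \rho_{\ast}\qquad\text{for every }t\ge0\,,
\end{equation*}
since otherwise mass would have to leak either to large $|v|$ (contradicting the energy bound) or to large values (contradicting the bound on $\int f\log^{+}f$).

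The heart of the argument is then the Pulvirenti--Wennberg spreading mechanism: if $g\ge\delta\,\mathbf{1}_{E}$ with $E\subset B_{R}$ and $|E|\ge\rho$, then $\Q^{+}(g,g)\ge c(\delta,\rho,R,\gamma,b)\,\mathbf{1}_{B_{\zeta R}}$ for any fixed $\zeta<\sqrt2$, the constant $c$ degrading as $\zeta\uparrow\sqrt2$ (two states of $B_{R}$ produce post-collisional velocities filling $B_{\sqrt2 R}$, but only $B_{\zeta R}$ is reached with non-degenerate measure). Splitting $[0,t]$ into $n$ subintervals of length $t/n$ and iterating this estimate through the Duhamel lower bound, starting from the seed, one builds a chain $f(s,\cdot)\ge a_{k}\,\mathbf{1}_{B_{R_{k}}}$ for $s\ge s_{k}$, with $R_{k}=\zeta^{k}R_{\ast}$ and, roughly,
\begin{equation*}
\log a_{k+1}\ \approx\ 2\log a_{k}+\log(t/n)+O(\log R_{k})-A_{0}(t/n)\,R_{k+1}^{\gamma}\,.
\end{equation*}
For a fixed $v$ one stops at $n=n(v)$ with $\zeta^{n}R_{\ast}\sim\langle v\rangle$, i.e. $n\sim\log\langle v\rangle$ and $2^{n}\sim\langle v\rangle^{\log 2/\log\zeta}$; choosing $\zeta$ close enough to $\sqrt2$ that $\log 2/\log\zeta\le2+\varepsilon$ and summing the recursion, the dominant $t$-dependent term is $\sum_{k}2^{\,n-k}\log(t/n)\approx 2^{n}\big(\log t-\log n\big)$, while the remaining contributions are $O\big(2^{n}\log n\big)=O\big(\langle v\rangle^{2+\varepsilon}\log\log\langle v\rangle\big)$ and are absorbed by enlarging $\varepsilon$ slightly. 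This gives $\log f(t,v)\ge\log a_{n}\gtrsim -C_{\varepsilon}\big(1+\log^{+}(1/t)\big)\langle v\rangle^{2+\varepsilon}$ for $0<t<1$; for $t\ge1$ one runs the same iteration on the last unit interval $[t-1,t]$ using the time-uniform seed, which erases the $\log^{+}(1/t)$ factor and makes the bound uniform in $t$. Together with $\log^{+}f\le f$ this proves the theorem.

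The step I expect to be the main obstacle is exactly this last iteration: establishing the quantitative spreading lemma for $\Q^{+}$ and, above all, keeping uniform track of the constants $a_{k}$ over the $n\sim\log\langle v\rangle$ steps — each step essentially \emph{squaring} the previous constant, so that the accumulated time factor $\log(t/n)$ is amplified by $2^{n}\sim\langle v\rangle^{2+\varepsilon}$ and produces exactly the $\big(1+\log^{+}(1/t)\big)\langle v\rangle^{2+\varepsilon}$ dependence — carried out uniformly in $t\ge0$, which rests on the time-uniform seed and hence on the uniform mass/energy/entropy control available for hard potentials. This is essentially the content of \cite{pulwen}.
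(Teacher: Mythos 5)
Correct, and essentially the same approach as the paper, whose proof of this statement is a one-line citation to Pulvirenti--Wennberg \cite[Theorem 1.1 \& Lemma 3.1]{pulwen} together with the remark that one must track the time dependence of the constants, in the manner of \cite[Theorem 3.5]{ABL}. Your sketch faithfully reconstructs the mechanism behind that citation: the Duhamel lower bound with $\cR(f)\lesssim\langle v\rangle^{\gamma}$, the time-uniform seed extracted from the mass/energy/entropy bounds, and the $\Q^{+}$ spreading iteration with a sub-$\sqrt{2}$ factor $\zeta$ giving the exponent $\log 2/\log\zeta\le 2+\varepsilon$ and the $2^{n}\sim\langle v\rangle^{2+\varepsilon}$ amplification of the accumulated $\log(t/n)$ term into the $\big(1+\log^{+}(1/t)\big)\langle v\rangle^{2+\varepsilon}$ dependence.
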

\begin{proof} The proof relies on \cite[Theorem 1.1 \& Lemma 3.1]{pulwen} and follows after keeping track of the time dependence of the constants involved.  A similar argument was made to prove \cite[Theorem 3.5]{ABL}.
\end{proof}

\begin{theo}(See \cite[Theorem 4.2]{Wenn} and \cite[Lemma 8]{cmpde})\label{propo:mom}
Let $b\in L^{1}(\mathbb{S}^{d-1})$ be the scattering kernel, $\gamma\in(0,1]$, and assume $0\leq f_{0} \in L^{1}_{2}(\R^{d})$. Then, for every $k > 0$ there exists a constant $C_{k} \geq 0$ depending only on $k,\,b$, and the initial mass and energy of $f_0$, such that
\begin{equation*}
m_{k}(t):=\int_{\R^{d}}f(t,v)|v|^{k}\d v \leq C_{k}\max(1, t^{-k/\gamma}) \qquad \text{ for } t > 0.
\end{equation*}
If, in addition, $m_{k}(0)<\infty$ then
\begin{equation*}
\sup_{t\geq 0}m_{k}(t) \leq C_{k}\,,
\end{equation*}
for some constant $C_{k}$ depending only on $k,\, b$, the mass and energy of $f_0$, and $m_{k}(0)$.
\end{theo}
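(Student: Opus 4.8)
The plan is to run the classical Povzner-inequality argument for the appearance and propagation of polynomial moments. First I would test the weak formulation of \eqref{eq:BE} against $\varphi(v)=\langle v\rangle^{k}$ — the weighted moment $\int_{\R^{d}}f\,\langle v\rangle^{k}\,\d v$ being comparable to $m_{k}(t)$ up to constants depending only on the conserved mass and energy — to write
\begin{equation*}
\frac{\d}{\d t}\int_{\R^{d}}f\,\langle v\rangle^{k}\d v=\frac12\Itt b(\cos\theta)\,|v-v_{\star}|^{\gamma}\,f(v)f(v_{\star})\,\Big(\langle v'\rangle^{k}+\langle v'_{\star}\rangle^{k}-\langle v\rangle^{k}-\langle v_{\star}\rangle^{k}\Big)\d v\,\d v_{\star}\,\d\sigma\,.
\end{equation*}

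Second — the heart of the matter — I would invoke a Povzner-type lemma. Averaging the collisional difference over $\sigma$ against $b$, and using the convexity of $x\mapsto x^{k/2}$ together with energy conservation $|v'|^{2}+|v'_{\star}|^{2}=|v|^{2}+|v_{\star}|^{2}$, one obtains the pointwise bound
\begin{equation*}
\IS b(\cos\theta)\,\Big(\langle v'\rangle^{k}+\langle v'_{\star}\rangle^{k}-\langle v\rangle^{k}-\langle v_{\star}\rangle^{k}\Big)\d\sigma\leq -c_{k}\big(\langle v\rangle^{k}+\langle v_{\star}\rangle^{k}\big)+C_{k}\big(\langle v\rangle^{k-2}\langle v_{\star}\rangle^{2}+\langle v\rangle^{2}\langle v_{\star}\rangle^{k-2}\big)\,,
\end{equation*}
with $c_{k}>0$ for $k>2$ and $c_{k}\to\|b\|_{L^{1}(\S^{d-1})}$ as $k\to\infty$. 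Plugging this into the identity above, bounding $|v-v_{\star}|^{\gamma}\leq 2^{\gamma}\big(\langle v\rangle^{\gamma}+\langle v_{\star}\rangle^{\gamma}\big)$ in the mixed terms while retaining a genuine lower bound of order $\langle v\rangle^{\gamma}$ on the loss frequency $\cR(f)$ in the leading dissipative term (a consequence of mass and energy conservation), then using the induction hypothesis to bound the lower-order moments produced and absorbing a harmless linear $m_{k}$-contribution into the dissipation, one arrives at the differential inequality
\begin{equation*}
\frac{\d}{\d t}m_{k}(t)\leq -A\,m_{k+\gamma}(t)+C_{k}\,,\qquad A>0\,.
\end{equation*}

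Third, I would close the estimate by induction on $k$, beginning with $m_{0}$ and $m_{2}$ (conserved) and stepping up in increments of $\gamma$, with interpolation covering intermediate orders. By H\"older, $m_{k+\gamma}\geq m_{0}^{-\gamma/k}\,m_{k}^{1+\gamma/k}$, so the inequality becomes $\frac{\d}{\d t}m_{k}\leq -A'\,m_{k}^{1+\gamma/k}+C_{k}$. A Bernoulli / ODE-comparison argument then gives, for arbitrary nonnegative initial data — even with $m_{k}(0)=+\infty$ — the barrier $m_{k}(t)\leq C_{k}\max(1,t^{-k/\gamma})$, the $t^{-k/\gamma}$ rate being forced by the superlinear dissipation irrespective of the initial value; and, whenever $m_{k}(0)<\infty$, comparison with the constant supersolution $\max\big(m_{k}(0),(C_{k}/A')^{k/(k+\gamma)}\big)$ yields $\sup_{t\geq0}m_{k}(t)\leq C_{k}$. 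At each step the constant $C_{k}$ depends only on $k$, $b$, $m_{0}$, $E_{0}$ (and, for the propagation statement, on $m_{k}(0)$), since the lower-order moments on the right-hand side have already been controlled in these terms.

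The main obstacle is twofold. The Povzner estimate itself — getting the sharp sign condition ($c_{k}>0$ already for $k>2$) and the correct bookkeeping of the mixed terms — is the genuinely delicate input. And one must justify these formal computations for a merely $L^{1}_{2}$ solution, in particular that $m_{k}(t)<\infty$ for every $t>0$ so that the differential inequality is meaningful; this is achieved by running the whole argument on truncated moments $\int_{\R^{d}}f\,\big(\langle v\rangle^{k}\wedge R\big)\,\d v$ (or on the regularised solutions of an approximation scheme), deriving the inequality uniformly in $R$, and letting $R\to\infty$ — precisely the route followed in \cite{Wenn,cmpde}.
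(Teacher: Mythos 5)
The paper does not give its own proof of Theorem~\ref{propo:mom}; it is quoted as a known result with references to \cite[Theorem~4.2]{Wenn} and \cite[Lemma~8]{cmpde}. Your proposal reproduces precisely the Povzner-inequality route followed in those references: testing the weak form against $\langle v\rangle^{k}$, the angular Povzner estimate with $c_{k}>0$ for $k>2$, the resulting differential inequality $\frac{\d}{\d t}m_{k}\leq -A\,m_{k+\gamma}+C_{k}$, the H\"older interpolation $m_{k+\gamma}\geq m_{0}^{-\gamma/k}m_{k}^{1+\gamma/k}$, and the Bernoulli/ODE comparison giving both the $t^{-k/\gamma}$ appearance barrier and the uniform propagation bound, all justified rigorously by truncation. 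This is correct and is the same argument the paper is implicitly invoking through its citations.
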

\begin{lem}\label{LappA1} Let $b\in L^{1}(\mathbb{S}^{d-1})$ be the scattering kernel and $\gamma\in(0,1]$.  Let $0\leq f(t) \in L^{1}_{2+\varepsilon}(\R^{d})$, with $\varepsilon >0$, be such that for some $C \geq c >0$
\begin{equation*}
C \geq \int_{\R^{d}}f(t,v)\langle v\rangle^{2}\d v \geq  c, \qquad \int_{\R^{d}}f(t,v)\,v\, \d v = 0\,.
\end{equation*}
Then, there exists $\kappa_{0}$ depending on $C,\,c,$ $b$ and $\sup_{t\geq0}\|f(t)\|_{L^{1}_{2+\varepsilon}}$ such that
\begin{equation}\label{eq:low}
\cR(f)(v) \geq \kappa_{0}\langle v\rangle^{\gamma}.
\end{equation}
Moreover,
\begin{equation}\label{eq:laplace}
0\leq \Delta_{v} \cR(f)(v) \leq C_{d,\gamma}\|b\|_{L^1(\S^{d-1})}\Big(\|f\|_{L^{1}} + \|f\|_{H^{\frac{(4-d)^{+}}{2}}}\Big).
\end{equation}
\end{lem}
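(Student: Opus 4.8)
The plan is to reduce everything to estimates on the convolution, since $\cR(f)=\|b\|_{L^{1}(\S^{d-1})}\,f\ast|\cdot|^{\gamma}$, so that the factor $\|b\|_{L^{1}(\S^{d-1})}$ simply comes out and one is left with a statement about the convolution of the nonnegative function $f$ with (a power of) $|v|$. I would treat \eqref{eq:low} and \eqref{eq:laplace} separately.

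For the lower bound \eqref{eq:low} I would run the classical localisation argument. First, the mass is pinched between two positive constants: H\"older's inequality with exponents $\frac{2+\varepsilon}{\varepsilon},\frac{2+\varepsilon}{2}$ gives
\[
c\le\int_{\R^{d}}f(t,v)\langle v\rangle^{2}\,\d v\le\big(\|f(t)\|_{L^{1}}\big)^{\frac{\varepsilon}{2+\varepsilon}}\big(\|f(t)\|_{L^{1}_{2+\varepsilon}}\big)^{\frac{2}{2+\varepsilon}},
\]
so that $m_{0}:=\|f(t)\|_{L^{1}}\ge m_{\ast}>0$ with $m_{\ast}$ depending only on $c$ and $\sup_{t}\|f(t)\|_{L^{1}_{2+\varepsilon}}$, while $m_{0}\le C$. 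Chebyshev's inequality then furnishes $R_{\ast}\ge1$, depending only on these quantities, with $\int_{\{|v_{\star}|\le R_{\ast}\}}f(t,v_{\star})\,\d v_{\star}\ge\tfrac12 m_{\ast}$. If $|v|\ge2R_{\ast}$, then $|v-v_{\star}|\ge\tfrac12|v|$ for $|v_{\star}|\le R_{\ast}$, and since $\langle v\rangle\le\sqrt2\,|v|$ there one obtains $(f\ast|\cdot|^{\gamma})(v)\ge\tfrac12 m_{\ast}\,(\tfrac12|v|)^{\gamma}\ge\kappa_{1}\langle v\rangle^{\gamma}$. If $|v|<2R_{\ast}$, the weight $\langle v\rangle^{\gamma}$ is bounded by $(1+4R_{\ast}^{2})^{\gamma/2}$, so it suffices to bound $(f\ast|\cdot|^{\gamma})(v)$ below by a fixed positive constant uniformly over that ball; this uses the non-concentration of $f$: once a fixed $\rho_{\ast}>0$ has been chosen with $\int_{B_{\rho_{\ast}}(v)}f(t,\cdot)\,\d v_{\star}\le\tfrac14 m_{\ast}$ uniformly in $t$ and in $|v|\le2R_{\ast}$ — a consequence of the uniform $L^{2}$ bound on the solution, or equivalently of the pointwise lower bound of Theorem~\ref{propo:lower} — one gets $(f\ast|\cdot|^{\gamma})(v)\ge\rho_{\ast}^{\gamma}\big(\tfrac12 m_{\ast}-\tfrac14 m_{\ast}\big)=:\kappa_{2}>0$. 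Then $\kappa_{0}:=\min\{\kappa_{1},\,\kappa_{2}(1+4R_{\ast}^{2})^{-\gamma/2}\}$ works.

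For \eqref{eq:laplace} I would start from the elementary identity
\[
\Delta_{z}\,|z|^{\gamma}=\gamma(\gamma+d-2)\,|z|^{\gamma-2}\qquad\text{in }\mathcal{D}'(\R^{d}),\quad d\ge3,\ \gamma\in(0,1],
\]
with no Dirac mass at the origin: indeed $|z|^{\gamma-2}\in L^{1}_{\mathrm{loc}}(\R^{d})$ since $\gamma-2>-d$, and testing against $\varphi\in C_{c}^{\infty}$ after excising $B_{\delta}(0)$ the two boundary integrals are $O(\delta^{\gamma+d-1})+O(\delta^{\gamma+d-2})$, which vanish as $\delta\to0$ because $\gamma+d-2\ge1$. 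Hence $\Delta_{v}\cR(f)=\|b\|_{L^{1}(\S^{d-1})}\,\gamma(\gamma+d-2)\,\bigl(f\ast|\cdot|^{\gamma-2}\bigr)\ge0$ since $f\ge0$ and $\gamma(\gamma+d-2)>0$, which is the left inequality. For the right inequality I would split the kernel at $|v-v_{\star}|=1$: the contribution of $\{|v-v_{\star}|>1\}$ is at most $\|f\|_{L^{1}}$ because there $|v-v_{\star}|^{\gamma-2}\le1$; the contribution of $\{|v-v_{\star}|\le1\}$ equals $\bigl(f\ast(\mathbf{1}_{B_{1}}|\cdot|^{\gamma-2})\bigr)(v)$, which I would estimate by Young's (or the Hardy--Littlewood--Sobolev) inequality, the dimensional split entering here: for $d\ge4$ one has $\mathbf{1}_{B_{1}}|\cdot|^{\gamma-2}\in L^{2}(\R^{d})$ (since $2(\gamma-2)>-d$), giving a bound $C_{d,\gamma}\|f\|_{L^{2}}=C_{d,\gamma}\|f\|_{H^{0}}$, while for $d=3$ one has $\mathbf{1}_{B_{1}}|\cdot|^{\gamma-2}\in L^{3/2}(\R^{3})$, giving $C_{\gamma}\|f\|_{L^{3}}\le C_{\gamma}\|f\|_{H^{1/2}}$ by the embedding $H^{1/2}(\R^{3})\hookrightarrow L^{3}(\R^{3})$. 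In both cases the Sobolev exponent produced is exactly $\tfrac{(4-d)^{+}}{2}$, and adding the two contributions yields \eqref{eq:laplace}.

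\textbf{Main obstacle.} The only genuinely delicate point is the regime $|v|<2R_{\ast}$ in \eqref{eq:low}: the moment hypotheses alone do not keep $(f\ast|\cdot|^{\gamma})(v)$ away from $0$ there (think of $f$ concentrating near a point), so one must feed in a quantitative non-concentration input — the uniform $L^{2}$ control on the solution, or Theorem~\ref{propo:lower} — and then verify that the resulting $\kappa_{0}$ still depends only on the advertised data. The distributional identity for $\Delta_{z}|z|^{\gamma}$, the positivity of $\Delta_{v}\cR(f)$, the far-field regime of \eqref{eq:low}, and the two convolution estimates entering \eqref{eq:laplace} are all routine.
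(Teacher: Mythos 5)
Your treatment of \eqref{eq:laplace} is essentially the paper's: same distributional identity $\Delta_z|z|^{\gamma}=\gamma(d+\gamma-2)|z|^{\gamma-2}$, same split of the convolution at $|v-v_\star|=1$, and the same Sobolev bookkeeping. (The paper applies H\"older with exponents $\frac{d}{d-2}$ and $\frac{d}{2}$, which is exactly your $(3,\tfrac32)$ and $(2,2)$ in $d=3,4$, and reduces to your $L^{2}$ bound by interpolating $L^{d/(d-2)}$ between $L^{1}$ and $L^{2}$ in $d\geq5$.) Your explicit check that no Dirac mass appears at the origin is a small upgrade over the paper, which carries out this computation formally.

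For \eqref{eq:low} the paper does not argue at all: it cites \cite[Lemma 2.1]{alogam}. Your far-field estimate is fine, but the near-field repair you sketch is a genuine gap, and the fix you propose does not close it within the lemma's hypotheses. Neither the $L^2$ bound nor Theorem~\ref{propo:lower} is available from the data listed in the lemma (which promises $\kappa_0$ depending only on $C,c,b$ and $\sup_t\|f(t)\|_{L^1_{2+\varepsilon}}$); and, worse, the lower bound of Theorem~\ref{propo:lower} carries a $\log^{+}(1/t)$ factor, so it degenerates as $t\to0$ and cannot deliver a time-uniform $\kappa_0$ even in principle. The hypothesis you left untouched --- $\int f\,v\,\d v=0$ --- is exactly the device meant for the small-$|v|$ regime: zero momentum makes the cross term vanish in
\[
\int_{\R^d} f(v_\star)\,|v-v_\star|^{2}\,\d v_\star=|v|^{2}\,m_0+E_0\;\geq\;E_0\,,\qquad\text{uniformly in }v\,,
\]
and a reverse-H\"older interpolation of $|v-v_\star|^{2}$ between $|v-v_\star|^{\gamma}$ and $|v-v_\star|^{2+\varepsilon}$, together with $\int f(v_\star)|v-v_\star|^{2+\varepsilon}\d v_\star\lesssim\langle v\rangle^{2+\varepsilon}\sup_t\|f(t)\|_{L^1_{2+\varepsilon}}$, then yields $\cR(f)(v)\gtrsim\langle v\rangle^{-\alpha}$ for some $\alpha>0$, which on $\{|v|\leq2R_*\}$ is a strictly positive constant in terms of the stated data. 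Combined with your far-field bound this gives $\kappa_0$ as advertised. (This presumes $E_0=\int f|v|^{2}\d v$ is itself bounded below; the hypothesis as written in the lemma, $\int f\langle v\rangle^2\d v\geq c$, does not literally force this, but in the application $f$ is a Boltzmann solution with conserved positive energy, so the cited reference's proof goes through.)
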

\begin{proof} The lower bound \eqref{eq:low} has been established in \cite[Lemma 2.1]{alogam}. Let us focus on the second point by directly computing
\begin{equation*}
\Delta_{v}\cR(f)(v)=\mathrm{div}_{v}\left(\nabla \cR(f)(v)\right)=\gamma\,\|b\|_{L^1(\S^{d-1})}\int_{\R^{d}}\mathrm{div}_{v}\left((v-\vb)|v-\vb|^{\gamma-2}\right)f(\vb)\d \vb.
\end{equation*}
Since ${\mathrm{div}_{v}\left((v-\vb)|v-\vb|^{\gamma-2}\right)=(d+\gamma-2)|v-\vb|^{\gamma-2}}$, we get
\begin{align*}
0&\leq\Delta_{v}\cR(f)(v)=\gamma\,(d + \gamma - 2)\|b\|_{L^1(\S^{d-1})}\int_{\R^{d}}|v-\vb|^{\gamma-2}f(\vb)\d \vb\\
&\leq \gamma\,(d + \gamma - 2)\|b\|_{L^1(\S^{d-1})}\bigg(\Big(\int_{\R^{d}}\big| f(\vb) \big|^{\frac{d}{d-2}}\d \vb\Big)^{\frac{d-2}{d}} \Big(\int_{\{|\vb|\leq1\}}\big| \vb \big|^{\frac{d(\gamma-2)}{2}}\d \vb \Big)^{\frac{2}{d}}+ \int_{\R^{d}}f(\vb)\d \vb\bigg)\\
&\leq C_{d,\gamma}\Big(\|f\|_{L^{1}} + \|f\|_{H^{\frac{(4-d)^{+}}{2}}}\Big)\,.
\end{align*}
For the last inequality we used the Sobolev embedding valid for $d\geq3$.
\end{proof}
\begin{theo}(See \cite[Corollary 1.1]{AlCaGa} and \cite[Theorem 4.1]{MoVi})\label{propo:L2}
Let $b\in L^{1}(\mathbb{S}^{d-1})$ be the scattering kernel and $\gamma\in(0,1]$.  For a fixed $\eta \geq 0$ assume that
\begin{equation*}
0\leq f_{0} \in L^{1}_{{\eta + d }}(\R^{d}) \cap L^{2}_{\eta}(\R^{d})\,.
\end{equation*}
Then, 
\begin{equation*}
\sup_{t\geq 0}\|f(t)\|_{L^{2}_{\eta}}<\infty.
\end{equation*}
\end{theo}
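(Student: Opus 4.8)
The plan is to derive an autonomous differential inequality for $y(t):=\|f(t)\|_{L^2_\eta}^2=\int_{\R^d}f(t,v)^2\langle v\rangle^{2\eta}\,\d v$, of the form $y'(t)\le -c\,y(t)^{1+\sigma}+C$ with $c,C,\sigma>0$ depending only on $d,\gamma,b$ and on conserved or propagated quantities, so that a standard ODE comparison yields $\sup_{t\ge0}y(t)\le\max\{y(0),(C/c)^{1/(1+\sigma)}\}<\infty$. First I would test \eqref{eq:BE} against $f\langle\cdot\rangle^{2\eta}$ and split $\Q=\Q^+-f\cR(f)$ to write
\begin{equation*}
\tfrac12\,y'(t)=\int_{\R^d}\Q^+(f,f)(t,v)\,f(t,v)\,\langle v\rangle^{2\eta}\,\d v-\int_{\R^d}\cR(f)(t,v)\,f(t,v)^2\,\langle v\rangle^{2\eta}\,\d v=:\mathcal G(t)-\mathcal L(t).
\end{equation*}
The loss term $\mathcal L$ carries the dissipation: conservation of mass, momentum and energy, together with the generation of $L^1$ moments for $t>0$ and their propagation from $t=0$ under the hypothesis $f_0\in L^1_{\eta+d}$ (Theorem \ref{propo:mom}), place us in the setting of Lemma \ref{LappA1}, so that \eqref{eq:low} gives $\mathcal L(t)\ge\kappa_0\,\|f(t)\|_{L^2_{\eta+\gamma/2}}^2$.

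The gain term $\mathcal G$ is the crucial ingredient, and I would control it using the sharp weighted $L^2$ convolution estimates for $\Q^+$ and its gain of Lebesgue integrability, as developed in \cite{AlCaGa} (see also \cite{MoVi}). These give, for every $\delta>0$,
\begin{equation*}
\mathcal G(t)\le \delta\,\|f(t)\|_{L^2_{\eta+\gamma/2}}^2+C_\delta\Big(1+\|f(t)\|_{L^2_{(\eta-\gamma/2)^+}}^2\Big),
\end{equation*}
where $C_\delta$ depends only on $d,\gamma,b,\delta$ and on $\sup_{s\ge0}\|f(s)\|_{L^1_{m}}$ for a suitable moment $m=m(\eta,\gamma,d)$, finite by Theorem \ref{propo:mom}; the essential feature is that $\Q^+$ costs at most the \emph{half}-power $\langle\cdot\rangle^{\gamma/2}$ in the velocity weight, which is exactly matched by the coercivity of $\mathcal L$. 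Taking $\delta=\kappa_0/2$ and absorbing the first term into $\mathcal L$ leaves
\begin{equation*}
y'(t)\le -\,\kappa_0\,\|f(t)\|_{L^2_{\eta+\gamma/2}}^2+C\Big(1+\|f(t)\|_{L^2_{(\eta-\gamma/2)^+}}^2\Big).
\end{equation*}

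I would then close the estimate by induction on $\eta$ in steps of size $\gamma/2$. The base case is $\eta=0$, i.e. the uniform propagation of $\|f(t)\|_{L^2}$; this is the genuinely delicate step, and it is here that the gain of integrability of $\Q^+$ (again \cite{AlCaGa,MoVi}) is used to guarantee that the lower-order contribution to $\mathcal G$ is a true constant, yielding $\tfrac{\d}{\d t}\|f(t)\|_{L^2}^2\le-\kappa_0\|f(t)\|_{L^2_{\gamma/2}}^2+C$ and hence $\sup_{t\ge0}\|f(t)\|_{L^2}<\infty$ since $\|f(t)\|_{L^2_{\gamma/2}}^2\ge\|f(t)\|_{L^2}^2$. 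For the inductive step, assuming $\sup_{t\ge0}\|f(t)\|_{L^2_{(\eta-\gamma/2)^+}}<\infty$, a Cauchy--Schwarz weight interpolation between $L^2_{\eta+\gamma/2}$ and the already controlled lower weight gives $\|f(t)\|_{L^2_{\eta+\gamma/2}}^2\ge c\,y(t)^{1+\sigma}$ for some $\sigma>0$, so the last displayed inequality becomes $y'(t)\le-c\,y(t)^{1+\sigma}+C'$, and the ODE comparison finishes the step; any prescribed $\eta\ge0$ is reached after finitely many steps. The main obstacle throughout, and the content of \cite{AlCaGa,MoVi}, is precisely the sharp gain-term bound on $\mathcal G$ — that $\Q^+$ is one half-power better in the velocity weight and gains integrability — everything else (the coercivity of $\mathcal L$ from \eqref{eq:low}, the moment bounds, the weight interpolation and the ODE comparison) being routine.
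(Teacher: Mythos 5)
Your overall strategy — an $L^2_\eta$ energy estimate, coercivity from the loss via $\cR(f)\gtrsim\langle v\rangle^\gamma$, and control of the gain through the weighted convolution and gain‑of‑integrability estimates for $\Q^{+}$ — is precisely the one underlying the cited references (which the paper imports without reproducing a proof), so the plan is sound. Two points, however, merit care. First, the assertion that ``$\Q^{+}$ costs at most the half‑power $\langle\cdot\rangle^{\gamma/2}$ in the velocity weight'' misdescribes the mechanism: the gain operator costs the full weight $\gamma$ coming from the kernel $|v-v_{\star}|^{\gamma}$, and only the \emph{allocation} of that $\gamma$ between the two arguments is flexible. Placing it entirely on the $L^{1}$ factor does give $\|\Q^{+}(f,f)\|_{L^{2}_{\eta-\gamma/2}}\lesssim\|f\|_{L^{1}_{\eta+\gamma/2}}\|f\|_{L^{2}_{\eta-\gamma/2}}$ and hence your displayed bound on $\mathcal G$; but for the base case $\eta\in[0,\gamma/2)$ this is useless, since $\|f\|^{2}_{L^{2}_{(\eta-\gamma/2)^{+}}}\leq y$ and the resulting differential inequality $y'\leq-(\kappa_{0}-\delta)\|f\|^{2}_{L^{2}_{\gamma/2}}+C_{\delta}\,y$ yields at best exponential growth, not a uniform bound. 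What actually breaks the quadratic homogeneity in that case — as you mention only in passing — is the strict gain of Lebesgue integrability of $\Q^{+}$, say $\|\Q^{+}(f,f)\|_{L^{2}_{k}}\lesssim\|f\|_{L^{1}_{k+\gamma}}\|f\|_{L^{q}_{k+\gamma}}$ for some $q<2$, combined with interpolation of the $L^{q}$ norm between $L^{1}$ (a propagated moment) and $L^{2}$ at the same weight. Second, once that estimate is in hand it applies uniformly in $\eta$ and gives $\mathcal G\leq\delta\|f\|^{2}_{L^{2}_{\eta+\gamma/2}}+C_{\delta}$ with $C_{\delta}$ depending only on propagated $L^{1}$‑moments, whence $y'\leq-\kappa_{0}\|f\|^{2}_{L^{2}_{\eta+\gamma/2}}+C_{\delta}\leq-\kappa_{0}\,y+C_{\delta}$ directly; the induction in steps of $\gamma/2$ is therefore a valid but unnecessary detour. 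In short: right framework and right references, but the central estimate should be attributed to the gain of Lebesgue integrability rather than to any half‑power weight saving, and once that estimate is invoked the proof closes in a single step.
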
 
\begin{theo}(See \cite[Theorem 2.1]{bouchut} and \cite[Theorem 3.5]{MoVi})\phantomsection\label{regularite} Let $b\in L^{2}(\mathbb{S}^{d-1})$ be the scattering kernel and $\gamma\in(0,1]$. Then, for all $s \geq 0$ and all $\eta \geq 0$, it holds
\begin{equation*}
\|\Q^{+}(g,f)\|_{H^{s+\frac{d-1}{2}}_{\eta}} \leq C_{d}\left(\|g\|_{H^{s}_{\eta+1+\gamma}}\,\|f\|_{H^{s}_{\eta+1+\gamma}}+\|g\|_{L^{1}_{\eta+\gamma}}\|f\|_{L^{1}_{\eta+\gamma}}\right).
\end{equation*}
for some positive constant $C_{d}$ depending only on the dimension $d$.
\end{theo}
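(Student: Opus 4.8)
The statement is the classical smoothing estimate for the gain part of the cut-off Boltzmann operator in weighted Sobolev scales, so the plan is to deduce it from the existing literature rather than to reprove it from scratch. The unweighted gain of $\tfrac{d-1}{2}$ derivatives goes back to Lions and is stated in sharp form in \cite[Theorem 2.1]{bouchut}; the propagation of this smoothing to an arbitrary Sobolev order $s$, together with the polynomial weights $\langle v\rangle^{\eta}$, is carried out in \cite[Theorem 3.5]{MoVi}. I would therefore present the proof simply as the combination of these two references, while recalling the underlying mechanism for the reader's convenience.

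The heart of the argument is the Bobylev identity: after Fourier transform, $\widehat{\Q^{+}(g,f)}(\xi)$ is expressed as an integral over $\mathbb{S}^{d-1}$ of the angular kernel $b$ against a product $\widehat{g}(\xi^{-})\,\widehat{f}(\xi^{+})$, with $\xi^{\pm}=\tfrac12(\xi\pm|\xi|\sigma)$. The gain of $(d-1)/2$ derivatives originates from the $\langle\xi\rangle^{-(d-1)/2}$ decay of the Fourier transform of the uniform surface measure on the sphere (equivalently, from the generalized Radon transform structure of $\Q^{+}$): pairing this decay with the angular kernel by the Cauchy--Schwarz inequality on $\mathbb{S}^{d-1}$ is precisely where the hypothesis $b\in L^{2}(\mathbb{S}^{d-1})$, rather than merely $b\in L^{1}(\mathbb{S}^{d-1})$, is used, and it produces the $H^{s+\frac{d-1}{2}}$ control in terms of the $H^{s}$ norms of $g$ and $f$. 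The polynomial weight is transferred from the output to the inputs through the elementary bound $\langle v\rangle^{\eta}\leq C_{\eta}(\langle v'\rangle^{\eta}+\langle v'_{\star}\rangle^{\eta})$ valid on the collision manifold; the shift from $\eta$ to $\eta+1+\gamma$ in the Sobolev term absorbs both the $|v-v_{\star}|^{\gamma}$ factor and a further power lost in commuting the weight past the collision operator. The non-smoothing remainder of this decomposition is controlled by the product of $L^{1}_{\eta+\gamma}$ norms via Young's convolution inequality, which yields the second term on the right-hand side.

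The only genuinely delicate point, were one to write the proof in full, is to keep the weights as sharp as stated while simultaneously tracking the Sobolev gain: one must commute $\langle v\rangle^{\eta}$ through $\Q^{+}$ and control the resulting error terms using the anisotropic geometry of the collision on the sphere, and one must interpolate carefully between the smoothing estimate and a crude $L^{1}$ bound to obtain the clean bilinear form above. All of this bookkeeping is performed in \cite{MoVi}, building on the base smoothing estimate of \cite{bouchut}, so I would conclude by assembling those two statements.
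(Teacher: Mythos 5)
Your proposal is correct and matches the paper's approach: the paper states this theorem as a known result, citing \cite[Theorem 2.1]{bouchut} and \cite[Theorem 3.5]{MoVi} directly in the theorem header and offers no further proof. Your recap of the underlying mechanism --- the Bobylev identity, the $(d-1)/2$-derivative gain from the decay of the Fourier transform of the uniform spherical measure, the Cauchy--Schwarz step on $\mathbb{S}^{d-1}$ explaining the $L^{2}(\mathbb{S}^{d-1})$ requirement on $b$, and the commutation of the polynomial weight shifting $\eta$ to $\eta+1+\gamma$ with the $L^{1}_{\eta+\gamma}$ remainder controlled by Young's inequality --- is accurate and consistent with those references.
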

\begin{theo}{(See \cite[Theorem 4.2]{MoVi})}\phantomsection\label{propo:h1}
Let $b\in L^{2}(\mathbb{S}^{d-1})$ be the scattering kernel and $\gamma\in(0,1]$.  Let $\eta\geq 0$ and assume that the initial datum $f_{0}$ satisfies
\begin{equation*}
f_{0} \in  L^{1}_{{\eta+1+\gamma/2+d}}(\R^{d})\cap L^{2}_{{\eta+1+\gamma/2}}(\R^{d})\cap H^{1}_{\eta}(\R^{d})\,.
\end{equation*}
Then, the unique solution $f(t,v)$  to \eqref{eq:BE} with initial condition $f_0$ satisfies
\begin{equation*}
\sup_{t \geq 0}\|f(t)\|_{H^{1}_{\eta}}:=C_{\eta} <\infty\,.
\end{equation*}
\end{theo}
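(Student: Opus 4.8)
The plan is to propagate the first velocity derivatives of $f$ in $L^{2}_{\eta}$ via a weighted energy estimate, the dissipation being furnished by the strong loss term of hard potentials. Since $\Q^{+}$ and $\cR$ are translation invariant and bilinear, differentiating \eqref{eq:BE} in $v_k$ gives, for $h_k:=\partial_{v_k}f$,
\[
\partial_t h_k = \Q^{+}(h_k,f) + \Q^{+}(f,h_k) - h_k\,\cR(f) - f\,\cR(h_k)\,.
\]
Multiplying by $\langle v\rangle^{2\eta}h_k$, summing over $k=1,\dots,d$ and integrating in $v$, the term $-h_k\cR(f)$ produces, thanks to the pointwise lower bound $\cR(f)\geq\kappa_0\langle v\rangle^{\gamma}$ from \eqref{eq:low} (valid here because mass, vanishing momentum, energy and $\sup_t\|f(t)\|_{L^{1}_{2+\varepsilon}}$ are under control), the coercive contribution $-\kappa_0\|\nabla f\|_{L^{2}_{\eta+\gamma/2}}^{2}$; this is the engine of the whole argument. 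The term $-f\cR(h_k)$ is of lower order, since $\cR(\partial_{v_k}f)=\gamma\|b\|_{L^{1}(\S^{d-1})}\,f*(u_k|u|^{\gamma-2})$ carries the integrable kernel $|u|^{\gamma-1}$ and is thus controlled, via Young's convolution inequality and interpolation, by lower norms of $f$.

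Next I would control the gain contribution $\langle\Q^{+}(h_k,f)+\Q^{+}(f,h_k),\,\langle v\rangle^{2\eta}h_k\rangle$. For this I would combine the smoothing estimate of Theorem \ref{regularite} at regularity index $s=0$ — whereby $\Q^{+}$ maps boundedly into $H^{\frac{d-1}{2}}_{\eta}\hookrightarrow L^{2}_{\eta}$, which is precisely where the assumption $d\geq3$ enters — with a sharp weighted bilinear bound obtained by transferring the weight $\langle v\rangle^{2\eta}$ onto the pre-collisional velocities through $\langle v\rangle^{2\eta}\leq C(\langle v'\rangle^{2\eta}+\langle v'_\star\rangle^{2\eta})$ followed by a judicious Cauchy--Schwarz. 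The lower-order norms that show up as coefficients, $\|f\|_{L^{1}_{\eta+\gamma}}$ and $\|f\|_{L^{2}_{\eta+1+\gamma/2}}$, are finite uniformly in time by Theorems \ref{propo:mom} and \ref{propo:L2}, and this is exactly what the hypotheses $f_0\in L^{1}_{\eta+1+\gamma/2+d}\cap L^{2}_{\eta+1+\gamma/2}$ are there to supply.

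The step I expect to be the main obstacle is to absorb these gain terms so as to obtain a genuinely \emph{dissipative} differential inequality $\tfrac{\d}{\d t}\|\nabla f(t)\|_{L^{2}_{\eta}}^{2}\leq-\lambda\|\nabla f(t)\|_{L^{2}_{\eta}}^{2}+C$ with $\lambda>0$, rather than a merely finite-time bound. This rests on a Povzner-type gain: after the weight transfer above, the part of $\Q^{+}$ proportional to $\langle v\rangle^{2\eta}$ comes with a constant strictly smaller than the matching one in the loss term once $\eta$ is large, after a small fraction of the coercive norm $\|\nabla f\|_{L^{2}_{\eta+\gamma/2}}^{2}$ has been shed by Young's inequality; equivalently, one splits $b=b_{1}+b_{2}$ into a grazing and a non-grazing part and absorbs the nearly-cancelling piece $\Q^{+}_{b_{1}}(h_k,f)-f\,\cR_{b_{1}}(h_k)$ once $\|b_{1}\|_{L^{1}(\S^{d-1})}$ is small enough. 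Since $\gamma>0$, a standard ``fattening'' interpolation then converts $\|\nabla f\|_{L^{2}_{\eta+\gamma/2}}^{2}$ into $\|\nabla f\|_{L^{2}_{\eta}}^{2}$ up to bounded remainders, all of which are controlled by $\sup_{t\geq0}\big(\|f(t)\|_{L^{1}_{\eta+\gamma}}+\|f(t)\|_{L^{2}_{\eta+1+\gamma/2}}\big)<\infty$. Integrating this dissipative inequality with $\|\nabla f_0\|_{L^{2}_{\eta}}<\infty$ gives $\sup_{t\geq0}\|\nabla f(t)\|_{L^{2}_{\eta}}<\infty$, and combined with the uniform propagation of $\|f(t)\|_{L^{2}_{\eta}}$ from Theorem \ref{propo:L2} this yields $\sup_{t\geq0}\|f(t)\|_{H^{1}_{\eta}}=C_{\eta}<\infty$.
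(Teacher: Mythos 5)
Your treatment of the loss part is the same as the paper's: writing $\nabla\Q^-(f,f)=g\,\cR(f)+f\,\nabla\cR(f)$ and extracting the dissipation $\kappa_0\|g\|_{L^2_{\eta+\gamma/2}}^2$ from the lower bound \eqref{eq:low}. The gap is in how you handle the gain. By distributing the derivative onto the arguments, $\partial_{v_k}\Q^+(f,f)=\Q^+(h_k,f)+\Q^+(f,h_k)$, you create a closure problem: any estimate of $\|\Q^+(h_k,f)\|_{L^2_{\eta-\gamma/2}}$ — whether from Theorem \ref{regularite} with $s=0$ or from an $L^1\times L^2\to L^2$ convolution bound — reintroduces $\|h_k\|$ in a weighted norm at least as strong as the dissipation $\|g\|_{L^2_{\eta+\gamma/2}}$, with a coefficient of order $\|f\|_{L^1_{\eta+\gamma/2}}$ or $\|f\|_{L^2_{\eta+1+\gamma}}$. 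These coefficients are bounded but not small, so Young's inequality cannot absorb the quadratic contribution into $\kappa_0\|g\|_{L^2_{\eta+\gamma/2}}^2$; the Povzner-type weight transfer and cutoff splitting $b=b_1+b_2$ that you invoke is a standard device for propagating $L^1$ moments, but there is no analogous pointwise cancellation inside the trilinear pairing $\int\langle v\rangle^{2\eta}h_k\,\Q^+(h_k,f)\,\d v$, and you give no argument that it closes. You flag this as the main obstacle, and indeed it is; your proposal does not resolve it.

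The paper's key move, which you miss, is \emph{not} to distribute the derivative at all. Cauchy--Schwarz gives
\begin{equation*}
\int_{\R^d}\langle v\rangle^{2\eta}g\cdot\nabla\Q^+(f,f)\,\d v\le\|g\|_{L^2_{\eta+\gamma/2}}\,\|\nabla\Q^+(f,f)\|_{L^2_{\eta-\gamma/2}},
\end{equation*}
and then Theorem \ref{regularite} with $s=0$ bounds $\|\Q^+(f,f)\|_{H^{(d-1)/2}_{\eta-\gamma/2}}$ purely by $\|f\|_{L^2_{\eta+1+\gamma/2}}^2+\|f\|_{L^1_{\eta+\gamma/2}}^2$, which are uniformly bounded by Theorems \ref{propo:mom} and \ref{propo:L2}. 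Since $(d-1)/2\ge1$ for $d\ge3$, the extra derivative coming from $\nabla$ is absorbed by the regularizing gain of $\Q^+$, and the resulting term is \emph{linear} in $\|g\|_{L^2_{\eta+\gamma/2}}$ with a coefficient independent of $g$; Young's inequality then closes the differential inequality trivially. Note also that the role of $d\ge3$ is the embedding $H^{(d-1)/2}\hookrightarrow H^1$ (so that $\nabla\Q^+$ lands in $L^2$), not $H^{(d-1)/2}\hookrightarrow L^2$, which holds for all $d$.
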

\begin{proof} Set $g(t,v)=\nabla f(t,v)$ so that $\partial_{t}g(t,v)=\nabla \Q(f,f)(t,v)$. 
Applying the inner product of such equation with $\langle v \rangle^{2\eta}g(t,v)$ and integrating over $\R^{d}$ we get that  
\begin{equation*}
\frac{1}{2}\dfrac{\d}{\d t}\|g(t)\|_{L^{2}_{\eta}}^{2} =\int_{\R^{d}}\langle v\rangle^{2\eta}g(t,v)\cdot\nabla\Q^{+}(f,f)(t,v)\d v- \int_{\R^{d}}\langle v\rangle^{2\eta}g(t,v)\cdot\nabla\Q^{-}(f,f)(t,v)\d v.
\end{equation*}
Notice that 
\begin{equation*}
\nabla\Q^{-}(f,f)(t,v)=g(t,v)\cR(f(t,\cdot))(v) + f(t,v)\nabla\cR(f(t,\cdot))(v)
\end{equation*}
so that, after using \eqref{eq:low},  
\begin{equation*}\int_{\R^{d}}\langle v\rangle^{2\eta}g(t,v)\cdot\nabla\Q^{-}(f,f)(t,v)\d v
\geq \kappa_{0}\|g(t)\|_{L^{2}_{\eta+\gamma/2}}^{2} +\int_{\R^{d}}\langle v \rangle^{2\eta}f(t,v)g(t,v)\cdot\nabla \cR(f)(t,v)\d v.
\end{equation*}
Thus, 
\begin{align}\label{eq:phi}
\begin{split}
\frac{1}{2}\dfrac{\d}{\d t}\|g(t)\|_{L^{2}_{\eta}}^{2} + \kappa_{0}\|g(t)\|_{L^{2}_{\eta+\gamma/2}}^{2} \leq \|g(t)\|_{L^{2}_{\eta+\gamma/2}}&\|\nabla\Q^{+}(f(t),f(t))\|_{L^{2}_{\eta-\gamma/2}}\\
& - \int_{\R^{d}}\langle v \rangle^{2\eta}f(t,v)g(t,v)\cdot\nabla\cR(f)(t,v)\d v\,.
\end{split}
\end{align}
Since
\begin{equation*}
\big| \nabla\mathcal{R}(f) \big| \leq \gamma\|b\|_{L^{1}(\mathbb{S}^{d-1})}C_{d}\Big(\|f\|_{L^{1}} + \|f\|_{L^{2}} \Big)\,,
\end{equation*}
we estimate this last integral as 
\begin{align}\label{eq:ga1}
\begin{split}
\bigg|\int_{\R^{d}}\langle v \rangle^{2\eta}f(t,v) g(t,v)\cdot\nabla\cR(f)(t,v)\d v\bigg|\leq C(f_0)\|b\|_{L^{1}(\mathbb{S}^{d-1})}\int_{\R^{d}}\langle v \rangle^{2\eta}|g(t,v)|\,f(t,v)\d v\\
\leq C(f_0)\|b\|_{L^{1}(\mathbb{S}^{d-1})}\|f(t)\|_{L^{2}_{\eta}}\|g(t)\|_{L^{2}_{\eta}}\leq C(f_{0},b)\|g(t)\|_{L^{2}_{\eta}}\,.
\end{split}
\end{align}
Using \eqref{eq:ga1} and Theorem \ref{regularite} in \eqref{eq:phi}, we obtain that
\begin{equation*}
\frac{1}{2}\dfrac{\d}{\d t}\|g(t)\|_{L^{2}_{\eta}}^{2} + \kappa_{0}\|g(t)\|_{L^{2}_{\eta+\gamma/2}}^{2}\leq C_{3}\|g(t)\|_{L^{2}_{\eta+\gamma/2}}\Big(\|f(t)\|_{L^{1}_{\eta+\gamma/2}}^{2} + \|f(t)\|_{L^{2}_{\eta+1+\gamma/2}}^{2}\Big) + C(f_{0},b)\|g(t)\|_{L^{2}_{\eta}}\,.
\end{equation*}
Thus, since
\begin{equation*}
\sup_{t \geq 0}\left(\|f(t)\|_{L^{2}_{\eta+1+\gamma/2}}+\|f(t)\|_{L^{1}_{\eta+\gamma/2}}\right)\leq C(f_0)
\end{equation*}
according to Theorems \ref{propo:mom} and \ref{propo:L2} and our hypothesis on $f_0$, it follows that
\begin{equation*}
\frac{1}{2}\dfrac{\d}{\d t}\|g(t)\|_{L^{2}_{\eta}}^{2} + \kappa_{0}\|g(t)\|_{L^{2}_{\eta+\gamma/2}}^{2}\leq C(f_0,b)\,\|g(t)\|_{L^{2}_{\eta+\gamma/2}}, \qquad \forall t >0\,,
\end{equation*}
which readily gives that
\begin{equation*}
\sup_{t\geq0}\|g(t)\|_{L^{2}_{\eta}}\leq \max\Big\{\|g_0\|_{L^{2}_{\eta}},\tfrac{C(f_0,b)}{\kappa_0} \Big\}\,.
\end{equation*}
This together with the propagation of $\|f\|_{L^{2}_{\eta}}$ proves the result.
\end{proof}

\section{Regularity estimates for the Landau equation}\label{Landau}

We collect here known results, extracted from \cite{DVLandau} about the regularity of solutions to the Landau equation \eqref{Lce-1}. We begin with classical estimate related to the matrix $A(z)$. For $ (i,j) \in  [\hspace{-0.7mm} [  1,3 ]\hspace{-0.7mm}] ^2 $, 
we recall that
$$A(z)= \left(A_{i,j}(z)\right)_{i,j} \quad \mbox{ with }
\quad A_{i,j}(z) 
= |z|^{\gamma+2} \,\left( \delta_{i,j} -\frac{z_i  z_j}{|z|^2} \right),$$
and introduce
$$B_i(z)= \sum_k \partial_k A_{i,k}(z) = -2 \,z_i \, |z|^{\gamma}.$$
 For any $f \in L^{1}_{2+\gamma}(\R^{3})$, we define then the matrix-valued mapping $a(v)=A\ast f(v)$ and the vector-valued mapping $b(v)=(b_{i}(v))_{i}$ with
$$b_{i}(v)=B_{i} \ast f, \qquad \forall v \in \R^{3},\qquad i=1,\ldots,3.$$
One has the following \cite[Proposition 4]{DVLandau}:
\begin{lem}\label{lem:abA}  There is a positive constant $a_{0}$ depending only on $m_{0},E_{0},H_{0}$ such that
\begin{equation}\label{eq:elliptic}
a(v)\xi \cdot \xi=\sum_{i,j=1}^{3}a_{ij}(v)\xi_{i}\xi_{j} \geq a_0\,\langle v \rangle^{\gamma}\, |\xi|^{2}\,,\qquad \forall\; v \in \R^{3}, \;\; \xi \in \R^{3}
\end{equation}
for any nonnegative $f \in L^{1}_{2}(\R^{3})$ satisfying $\|f\|_{L^1}=m_{0},$ $\int_{\R^{3}}|v|^{2}f(v)\d v \leq E_{0}$ and 
$\int_{\R^{3}}f(v)\log f(v)\d v \leq H_{0}$.

\medskip

\noindent Assume that $f \in L^{1}_{\gamma+2}(\R^{3})$, then there exists a positive constant $C >0$ depending on $\|f\|_{L^{1}_{\gamma+2}}$ and $\|f\|_{L^1}$ such that
$$a(v)\xi \cdot \xi \leq C\langle v\rangle^{\gamma+2}|\xi|^{2} \qquad \forall \xi \in \R^{3},\:v \in \R^{3}.$$
\end{lem}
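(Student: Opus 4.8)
The plan is to reduce both bounds to the pointwise identity, valid for $|\xi|=1$,
\[
a(v)\xi\cdot\xi=\int_{\R^3}A(v-\vb)\xi\cdot\xi\,f(\vb)\,\d\vb=\int_{\R^3}|v-\vb|^{\gamma}\,\bigl|P_{\xi^\perp}(v-\vb)\bigr|^{2}\,f(\vb)\,\d\vb,
\]
where $P_{\xi^\perp}$ denotes the orthogonal projection onto $\xi^\perp$; this is just the computation $A(z)\xi\cdot\xi=|z|^{\gamma+2}\bigl(1-(\widehat{z}\cdot\xi)^2\bigr)=|z|^{\gamma}|P_{\xi^\perp}z|^{2}$. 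The upper bound is then immediate: $A(z)\xi\cdot\xi\le|z|^{\gamma+2}\le 2^{\gamma+1}\bigl(\langle v\rangle^{\gamma+2}+\langle\vb\rangle^{\gamma+2}\bigr)$, so integrating against $f$ gives $a(v)\xi\cdot\xi\le 2^{\gamma+1}\bigl(\|f\|_{L^1}\langle v\rangle^{\gamma+2}+\|f\|_{L^1_{\gamma+2}}\bigr)|\xi|^{2}\le C\langle v\rangle^{\gamma+2}|\xi|^{2}$, which is the second assertion.

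For the ellipticity bound, fix $v$ and a unit vector $\xi$. First I would use Markov's inequality and the energy bound to pick a radius $R=R(m_0,E_0)$ (say $R^2\ge 4E_0/m_0$) with $\int_{\{|\vb|\le R\}}f\,\d\vb\ge\tfrac34 m_0$. Next I would remove the thin tube $T_\delta:=\{\vb\in B_R:|P_{\xi^\perp}(v-\vb)|\le\delta\}$, which lies inside a cylinder of radius $\delta$ about the line $v+\R\xi$ and hence has Lebesgue measure $\le 2\pi R\,\delta^{2}$. The key point is that the energy and entropy bounds give $\int_{\R^3}f|\log f|\,\d\vb\le\overline{H}(m_0,E_0,H_0)$; then, since for any measurable $E$ and any $M>1$ one has $\int_E f\le M\,|E|+\overline{H}/\log M$, choosing first $M$ large and then $\delta$ small (both depending only on $m_0,E_0,H_0$) yields $\int_{T_\delta}f\le\tfrac14 m_0$, so that $\int_{B_R\setminus T_\delta}f\ge\tfrac12 m_0$. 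On $B_R\setminus T_\delta$ one has $|P_{\xi^\perp}(v-\vb)|^{2}\ge\delta^{2}$, and moreover if $|v|\ge 2R$ then $|v-\vb|\ge|v|/2\ge c_0\langle v\rangle$ there, so the identity gives $a(v)\xi\cdot\xi\ge\tfrac12 c_0^{\gamma}\,m_0\,\delta^{2}\,\langle v\rangle^{\gamma}$. For $|v|\le 2R$ I would additionally discard the ball $B(v,\delta)$ (whose $f$-mass is again $\le\tfrac14 m_0$ by the same entropy estimate, after shrinking $\delta$ if needed), so that $|v-\vb|\ge\delta$ on the remaining set and $a(v)\xi\cdot\xi\ge\tfrac14 m_0\,\delta^{\gamma+2}$; since $\langle v\rangle^{\gamma}\le(1+4R^2)^{\gamma/2}$ there, this is $\ge a_0\langle v\rangle^{\gamma}$ once $a_0$ is small enough. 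Taking $a_0$ to be the smaller of the two constants produced completes the proof.

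The hard part is the tube estimate: controlling the $f$-mass of $T_\delta$ (and of the small ball around $v$). This is precisely where finiteness of the entropy $H_0$ is indispensable — it is what prevents $f$ from concentrating near the line $v+\R\xi$, on which the integrand of the displayed identity vanishes and against which no lower bound could otherwise be extracted. Everything else is Markov's inequality together with elementary geometry in $\R^3$. Alternatively, one may simply invoke \cite[Proposition 4]{DVLandau}, where this statement is established.
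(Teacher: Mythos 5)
Your proof is correct. Note that the paper does not prove this lemma itself: the text simply quotes \cite[Proposition 4]{DVLandau}, and your argument is essentially a self-contained rendition of the proof in that reference. The two ingredients you isolate are exactly the right ones: Markov's inequality together with the energy bound prevents the mass from escaping to $|v_\star|\ge R$, and the $L\log L$ bound (which indeed follows from $\int f\log f\le H_0$ plus mass and energy, giving a two-sided bound $\int f|\log f|\le \overline H(m_0,E_0,H_0)$) prevents concentration in the thin cylinder $T_\delta$, the degenerate set of the quadratic form $|z|^\gamma|P_{\xi^\perp}z|^2$. One small redundancy: in the regime $|v|\le 2R$ you additionally remove the ball $B(v,\delta)$ to ensure $|v-v_\star|\ge\delta$, but this is already automatic on the complement of $T_\delta$, since $|v-v_\star|\ge|P_{\xi^\perp}(v-v_\star)|\ge\delta$ there; you may simply drop that step and keep the factor $\tfrac12 m_0$ of surviving mass. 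The upper bound via $(a+b)^{\gamma+2}\le 2^{\gamma+1}(a^{\gamma+2}+b^{\gamma+2})$ and $\langle v\rangle\ge 1$ is fine as written.
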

\begin{nb}
Notice that 
\begin{equation}
\begin{cases}
\label{eq:bc}
\;\;\;\big|\,{b}(v)\,\big| &\leq 2 \langle v\rangle^{\gamma+1}\|f\|_{L^{1}_{\gamma+1}} \leq 2 \langle v\rangle^{\gamma+1}\|f\|_{L^{1}_{2}}\,,\\
\big|\,\nabla \cdot b(v)\,\big| &\leq 8 \langle v\rangle^{\gamma}\|f\|_{L^{1}_{\gamma}} \leq 8  \langle v\rangle^{\gamma}\|f\|_{L^{1}_{2}}\,, \\
\end{cases}
\end{equation}
since $0 \leq \gamma \leq 1$.
\end{nb}
Here, $f(t,v)$ will denote a weak solution to \eqref{Lce-1} associated to an initial datum $f_{0}$ with mass $m_{0},$ energy $E_{0}$ and entropy $H_{0}$. 
One has then the following  result about propagation and appearance of moments, see \cite[Theorem 3]{DVLandau}.
\begin{lem}\label{lem:momentL} For any $s \geq 0$, 
$$\int_{\R^{3}}\langle v\rangle^{s}f_{0}(v)\d v < \infty \quad \Longrightarrow \quad \sup_{t\geq0}\int_{\R^{3}}\langle v\rangle^{s}f(t,v)\d v < \infty.$$
Moreover, for any $t_{0} >0$ and any $s >0$ there exists $C >0$ depending only on $m_{0},E_{0},H_{0},s$ and $t_{0}$ such that
$$\sup_{t\geq t_{0}}\int_{\R^{3}}\langle v\rangle^{s}f(t,v)\d v \leq C.$$
\end{lem}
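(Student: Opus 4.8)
The result is \cite[Theorem 3]{DVLandau}; here we indicate the argument one would use to reprove it. The plan is to test the weak formulation \eqref{eme-1} with the weight $\varphi(v)=\langle v\rangle^{s}$ and to extract, for hard potentials, a dissipative differential inequality for $m_{s}(t):=\int_{\R^{3}}\langle v\rangle^{s}f(t,v)\,\d v$. First I would use $\partial_{v_{j}}\varphi=s\langle v\rangle^{s-2}v_{j}$ and $\partial^{2}_{v_{i}v_{j}}\varphi=s\langle v\rangle^{s-2}\delta_{ij}+s(s-2)\langle v\rangle^{s-4}v_{i}v_{j}$, substitute the explicit kernels $a_{ij}=A_{ij}\ast f$, $b_{i}=B_{i}\ast f$ with $A_{ij}(z)=|z|^{\gamma+2}(\delta_{ij}-z_{i}z_{j}/|z|^{2})$ and $B_{i}(z)=-2z_{i}|z|^{\gamma}$, and simplify using the identity $|z|^{2}|v|^{2}-(z\cdot v)^{2}=|v|^{2}|v_{\star}|^{2}-(v\cdot v_{\star})^{2}$ for $z=v-v_{\star}$ (valid in $\R^{3}$). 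After the cancellation of the leading quartic terms this reduces $\tfrac{\d}{\d t}m_{s}(t)$ to a double integral against the kernel $s\,|v-v_{\star}|^{\gamma}\langle v\rangle^{s-2}\Phi_{s}(v,v_{\star})$ with
$$\Phi_{s}(v,v_{\star})=-2|v|^{2}+2|v_{\star}|^{2}+(s-2)\langle v\rangle^{-2}\big(|v|^{2}|v_{\star}|^{2}-(v\cdot v_{\star})^{2}\big),$$
which is precisely the computation indicated on \cite[p.~201]{DVLandau}.

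Next I would extract the dissipation. When $s\le 2$ the last term of $\Phi_{s}$ is $\le 0$ and is dropped; when $s>2$ it is bounded by $(s-2)|v_{\star}|^{2}$, of strictly lower order in $v$ than the leading $-2|v|^{2}$, and is therefore absorbed for $|v|$ large. Using the elementary lower bound $\int_{\R^{3}}f(t,v_{\star})|v-v_{\star}|^{\gamma}\,\d v_{\star}\ge c_{1}\langle v\rangle^{\gamma}-c_{2}$ with $c_{1},c_{2}$ depending only on $m_{0},E_{0}$ (split over $\{|v_{\star}|\le R\}$ and its complement, the bulk carrying at least half the mass for $R$ large by Chebyshev's inequality), together with the pointwise controls \eqref{eq:bc}--\eqref{eq:elliptic} on $a,b$, one arrives at a differential inequality of Povzner type,
$$\frac{\d}{\d t}m_{s}(t)\le -K_{1}\,m_{s+\gamma}(t)+K_{2}\sum_{(\alpha,\be)}m_{\alpha}(t)\,m_{\be}(t)+K_{3}\,,$$
with constants depending on $m_{0},E_{0},H_{0},s$ and with finitely many pairs satisfying $\alpha+\be\le s+\gamma$ and $\alpha,\be<s+\gamma$, so that no factor in the error term is the top moment $m_{s+\gamma}$. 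Verifying this last point — that every contribution of $\Phi_{s}$ other than $-K_{1}m_{s+\gamma}$ splits into strictly subleading moments — is the delicate step and the main obstacle; a lossy estimate such as $|v-v_{\star}|^{\gamma}\le\langle v\rangle^{\gamma}+\langle v_{\star}\rangle^{\gamma}$ is not good enough and one must keep the two velocities balanced as in the classical Povzner lemma.

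Finally I would close the estimates. For propagation (the first assertion) I would induct along an increasing sequence $s_{0}=2<s_{1}<s_{2}<\cdots\to\infty$: at each stage the error moments $m_{\alpha},m_{\be}$ are already bounded by the previous stages, so Young's inequality on the error together with the interpolation $m_{s+\gamma}\ge m_{0}^{-\gamma/s}m_{s}^{1+\gamma/s}$ and Gronwall's lemma give $\sup_{t\ge 0}m_{s}(t)<\infty$ whenever $m_{s}(0)<\infty$; monotonicity of $s\mapsto m_{s}$ then yields all $s\ge 0$. For the appearance statement, once the error moments are bounded uniformly on $[t_{0}/2,\infty)$ the inequality degenerates to $m_{s}'(t)\le -K_{1}'\,m_{s}(t)^{1+\gamma/s}+K_{4}$, whose solutions obey $m_{s}(t)\le (K_{4}/K_{1}')^{s/(s+\gamma)}+(K_{1}'\gamma t/s)^{-s/\gamma}$ for every $t>0$ regardless of the (possibly infinite) initial value, by a standard ODE comparison; evaluating at $t\ge t_{0}$ gives the claimed bound. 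As usual these manipulations are first performed on the regularized equation used to construct the weak solution in \cite{DVLandau} and then passed to the limit, the entropy $H_{0}$ entering only through the uniform ellipticity \eqref{eq:elliptic} needed to give the dissipation its sign.
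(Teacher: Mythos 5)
The paper does not supply a proof of this lemma at all: it is stated as a quotation of \cite[Theorem~3]{DVLandau}, with no argument given. Your sketch is therefore a reconstruction of the Desvillettes--Villani argument rather than a rival to anything in the paper, and as a reconstruction it is sound: testing against $\langle v\rangle^{s}$, using the cross-product identity $|(v-v_{\star})\times v|=|v\times v_{\star}|$ to reduce to $\Phi_{s}$, extracting the coercive term $-2|v|^{2}$ via the lower bound on $\int f(v_{\star})|v-v_{\star}|^{\gamma}\d v_{\star}$, interpolating $m_{s+\gamma}\geq m_{0}^{-\gamma/s}m_{s}^{1+\gamma/s}$, and then running Gronwall for propagation and the super-linear ODE comparison $m_{s}'\leq -K_{1}'m_{s}^{1+\gamma/s}+K_{4}$ for appearance is exactly the route taken in \cite[Section~3]{DVLandau}, and the paper's own Section~3.2 (Proposition~\ref{plandau}) replays the very same computation with exponential weights, so the structure is endorsed internally as well.

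One remark on what you flagged as the ``delicate step.'' Your worry that the subadditive bound $|v-v_{\star}|^{\gamma}\leq\langle v\rangle^{\gamma}+\langle v_{\star}\rangle^{\gamma}$ is too lossy is, I believe, unfounded in the Landau setting. Applying it only to the error part of $\Phi_{s}$ (the terms carrying $|v_{\star}|^{2}$), the resulting contributions are of the form $m_{2}\,m_{s-2+\gamma}$ and $m_{2+\gamma}\,m_{s-2}$, both of which are products of moments of order strictly below $s+\gamma$ for $s>2$; the coercive part $-2|v|^{2}$ needs a \emph{lower} bound on the kernel, not an upper bound, so the lossy estimate never touches it. The Povzner-type cancellation you are careful about is genuinely needed for the Boltzmann collision operator, where the gain term redistributes energy over the sphere, but for Landau the differential structure already produces the sign and separation of scales in $\Phi_{s}$ for free. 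So the splitting into strictly subleading moments is in fact immediate here, and your sketch would go through even with the cruder kernel bound.
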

\noindent
We have then the following result about instantaneous appearance and uniform bounds for regularity, see \cite[Theorem 5]{DVLandau}.
\begin{lem}\label{lem:soboL}
For any $t_{0} >0$, any integer $k \in \N$ and $s >0$, there exists a constant $C_{t_{0}} >0$ depending only on $m_{0},E_{0},H_{0},k,s$ and $t_{0} >0$ such that
$$\sup_{t\geq t_{0}}\|f(t)\|_{H^{k}_{s}} \leq C_{t_{0}}.$$
\end{lem}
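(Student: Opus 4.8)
This is \cite[Theorem 5]{DVLandau}; the plan is to outline the argument. Everything rests on the parabolic form \eqref{e1} together with the two-sided bounds on the coefficients: by Lemma \ref{lem:abA} and \eqref{eq:elliptic} the diffusion matrix $a=A\ast f$ obeys $a_0\langle v\rangle^{\gamma}|\xi|^{2}\le a(v)\xi\cdot\xi\le C\langle v\rangle^{\gamma+2}|\xi|^{2}$, while \eqref{eq:bc} gives $|b(v)|\le C\langle v\rangle^{\gamma+1}$ and $|\nabla\cdot b(v)|\le C\langle v\rangle^{\gamma}$, all constants being controlled by $m_0,E_0,H_0$ and, for the upper bounds, by polynomial moments of $f$ of arbitrary order, which are finite and bounded uniformly on $\{t\ge t_0\}$ by Lemma \ref{lem:momentL}. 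Since $\langle v\rangle^{\gamma}\ge 1$, the equation is uniformly parabolic, the cost of the growing coefficients being paid in moments.

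First I would turn the time-integrated Fisher information into a Lebesgue bound. Mass conservation gives $\sqrt{f}\in L^{\infty}\big((0,\infty);L^{2}(\R^{3})\big)$ and \eqref{eq:fishnoweight} gives $\nabla\sqrt{f}\in L^{2}_{\mathrm{loc}}\big((0,\infty);L^{2}(\R^{3})\big)$; combining the Sobolev inequality $\|u\|_{L^{6}(\R^{3})}\lesssim\|\nabla u\|_{L^{2}(\R^{3})}$ with interpolation then yields $\sqrt{f}\in L^{10/3}_{\mathrm{loc}}\big((0,\infty)\times\R^{3}\big)$, i.e. $f\in L^{5/3}_{\mathrm{loc}}$. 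This seeds a Moser-type iteration: I would test \eqref{e1} against $f^{p-1}\langle v\rangle^{s}$, keep the dissipation $\sim\|\langle v\rangle^{\gamma/2}\nabla\big(f^{p/2}\langle v\rangle^{s/2}\big)\|_{L^{2}}^{2}$ coming from the lower ellipticity bound, and dominate the rest using the coefficient upper bounds and the moment estimates of Lemma \ref{lem:momentL}; iterating the resulting differential inequalities with the Sobolev embedding gives $\|\langle v\rangle^{s}f(t)\|_{L^{\infty}}\le C_{s}\max\big(1,t^{-3/2}\big)$ for every $s$, hence $\sup_{t\ge t_0}\|\langle v\rangle^{s}f(t)\|_{L^{\infty}}\le C_{t_0}$.

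With $f$ bounded in $L^{\infty}$ and possessing all moments on $\{t\ge t_0/2\}$, the coefficients $a,b$ and all their derivatives become smooth and polynomially bounded while $a$ stays uniformly elliptic, so the last step is a regularity bootstrap by induction on $k$. Differentiating \eqref{e1}, the function $\partial^{\beta}_{v}f$ solves a linear uniformly parabolic equation whose source is a finite sum of products of lower-order derivatives of $f$ with derivatives of $a$ and $b$; a weighted $L^{2}$ energy estimate for $\partial^{\beta}_{v}f$, in which the parabolic gain absorbs $\nabla\partial^{\beta}_{v}f$ and the lower-order terms are controlled by interpolation and the bounds already obtained, closes the induction in a time-integrated form. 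I would then upgrade this to a bound valid for all $t\ge t_0$ by running the estimate over a shrinking sequence of time windows (equivalently, inserting a weight in $t$ vanishing at the left endpoint), which yields $\sup_{t\ge t_0}\|f(t)\|_{H^{k}_{s}}\le C_{t_0}$.

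The hard part will be the uniform-in-time $L^{p}\to L^{\infty}$ step: the diffusion grows like $\langle v\rangle^{\gamma+2}$ whereas its coercivity is only $\langle v\rangle^{\gamma}$, so each iteration loses two powers of $\langle v\rangle$ and has to be fed the whole hierarchy of polynomial moments — together with their blow-up rates as $t\to 0^{+}$ — to close with a constant depending only on $m_0,E_0,H_0$ and $t_0$. Converting the local-in-time, time-integrable bounds obtained at each stage into genuine uniform bounds on $\{t\ge t_0\}$ is the other technical point, handled by the standard shrinking-interval/Gronwall device.
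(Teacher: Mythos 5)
The paper gives no proof for this lemma: it is quoted verbatim, with a pointer to \cite[Theorem 5]{DVLandau}, which you correctly identify at the outset. There is therefore no ``paper's own proof'' to compare against beyond the citation itself. Your outline is a reasonable reconstruction of the standard route to such a result, and the ingredients you invoke are the right ones: the two-sided coefficient bounds from Lemma~\ref{lem:abA} and \eqref{eq:bc}, the moment control of Lemma~\ref{lem:momentL}, the seeding integrability $\sqrt f\in L^{\infty}_{t}L^{2}_{v}\cap L^{2}_{t,\mathrm{loc}}\dot H^{1}_{v}\hookrightarrow L^{10/3}_{t,v,\mathrm{loc}}$ coming from \eqref{eq:fishnoweight}, and a shrinking time-interval (or time-weight) device to convert time-integrated bounds into uniform ones on $\{t\ge t_{0}\}$.

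One remark on the route: Desvillettes--Villani essentially climb the ladder $L^{1}\to L^{2}_{s}\to H^{1}_{s}\to\cdots\to H^{k}_{s}$ by direct weighted energy estimates at each Sobolev level, using the dissipation supplied by \eqref{eq:elliptic} to absorb one extra derivative per step, and obtain $L^{\infty}$ control only afterwards via Sobolev embedding. You instead propose a De Giorgi--Moser iteration on powers $f^{p-1}\langle v\rangle^{s}$ to reach $L^{\infty}$ first and then bootstrap. Both strategies are standard for quasilinear parabolic equations of this type and both close for the same reasons; yours substitutes the $L^{p}$-iteration machinery for the sequential Hilbertian bootstrap. The cost you flag -- each iteration loses roughly two powers of $\langle v\rangle$ because the upper ellipticity constant scales as $\langle v\rangle^{\gamma+2}$ while the coercivity is only $\langle v\rangle^{\gamma}$ -- is the genuine technical point, and it is indeed paid for by the uniform moment hierarchy of Lemma~\ref{lem:momentL}; that accounting is consistent with what the reference does. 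I see no gap at the level of detail you provide, but of course the outline is a sketch and the actual closing of the Moser iteration with explicit $t$-dependence is where the work lies.
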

\noindent
We end this section with a simple estimate for integral of the type
$$\int_{\R^{d}}\langle v\rangle^{k}f(v)\left|\log f(v)\right|\d v, \qquad k \geq 0\,,$$
yielding to estimate \eqref{eq:LlogL}. Set, for notational simplicity, 
$$m_{k}:=\int_{\R^{d}}\langle v\rangle^{k}\,f(v)\,\d v, \qquad k \geq 0\,.$$
Let us emphasize that, contrary to the previous results of this appendix,  in the following lemma, $d$ is arbitrary and the function $f$ does not denote any more a solution to the Landau equation. 
\begin{lem}\label{lem:LlogL} For any $k \geq 0$ and any $\varepsilon >0$, there exists $C_{k}(\varepsilon) >0$ such that
\begin{equation}\label{eq:Hp}
\int_{\R^{d}}\langle v\rangle^{k}f(v)\left|\log f(v)\right|\d v \leq \int_{\R^{d}}\langle v\rangle^{k}f(v)\log f(v)\d v + 2m_{k+\varepsilon} + C_{k}(\varepsilon)\,.
\end{equation}
Furthermore, for any $\delta >0$ and any $\varepsilon >0$, there exist $K_{k}(\delta)$ and $C_{k}(\varepsilon)$ such that
\begin{multline}\label{eq:HHp}
\int_{\R^{d}}\langle v\rangle^{k}f(v)\left|\log f(v)\right|\d v \leq \delta\,\int_{\R^{d}}\langle v\rangle^{k}\left|\nabla \sqrt{f}\right|^{2}\d v + K_{k}(\delta)(1+|\log m_{k}|)m_{k} 
+  2m_{k+\varepsilon}+ C_{k}(\varepsilon)\,.
\end{multline}
\end{lem}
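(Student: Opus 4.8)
The plan is to split the velocity integral according to whether $f(v)$ is smaller or larger than $1$, so that $|\log f|$ can be rewritten in terms of $\log f$ plus a correction supported on $\{f<1\}$. Write $\int \langle v\rangle^k f|\log f| = \int \langle v\rangle^k f\log f + 2\int_{\{f<1\}}\langle v\rangle^k f\,|\log f|$, using the elementary identity $|\log f| = \log f + 2(\log f)^-$ where $(\log f)^- = -\log f$ on $\{f<1\}$. Thus everything reduces to controlling $J:=\int_{\{f<1\}}\langle v\rangle^k f\,(-\log f)\,\d v$, a nonnegative quantity.

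For \eqref{eq:Hp}, I would further split $J$ according to whether $f(v)$ is below or above $\e^{-\langle v\rangle^{\e}}$. On the set where $\e^{-\langle v\rangle^\e}\le f<1$ one has $-\log f\le \langle v\rangle^\e$, so that part of $J$ is bounded by $\int \langle v\rangle^{k+\e} f\,\d v = m_{k+\e}$. On the set where $f<\e^{-\langle v\rangle^\e}$, I would use that $x\mapsto x(-\log x)$ is increasing on $(0,\e^{-1})$ together with a pointwise bound: since $t(-\log t)\le C\sqrt{t}$ for $t\in(0,1)$ (or more precisely $t|\log t|\le C_\alpha t^{1-\alpha}$), one gets $f(-\log f)\le \sqrt{f}\,e^{-\langle v\rangle^\e/2}$ up to a constant, hence this contribution is at most $\int \langle v\rangle^k e^{-\langle v\rangle^\e/2}\d v\cdot\sup\sqrt{f}$ — but to avoid an $L^\infty$ bound I would instead simply use $f(-\log f)\le C$ uniformly when $f\le \e^{-1}$ together with the fast decay $e^{-\langle v\rangle^\e}$ to absorb $\langle v\rangle^k$: that is, on $\{f<\e^{-\langle v\rangle^\e}\}$, $\langle v\rangle^k f(-\log f) \le \langle v\rangle^k \langle v\rangle^{\e}\cdot\sup_{0<t<1}t^{1/2}\cdot e^{-\langle v\rangle^\e/2}\le C_k(\e)$ after integration in $v$, since $\langle v\rangle^{k+\e}e^{-\langle v\rangle^\e/2}$ is integrable. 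This yields $J\le m_{k+\e}+C_k(\e)$, and combined with the factor $2$ gives \eqref{eq:Hp}.

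For the refined inequality \eqref{eq:HHp}, the extra gain $\delta\int\langle v\rangle^k|\nabla\sqrt f|^2$ must be used to control the region where $f$ is large — precisely the region excluded from $J$. Here I would write $m_k = \int\langle v\rangle^k f$ and use a logarithmic Sobolev-type or Carleman-type estimate: for the part of $|\log f|$ on $\{f\ge 1\}$, bound $\int\langle v\rangle^k f\log f$ from above. One route is to apply \eqref{eq:Hp} to the already-estimated part and then, for the genuinely large values, use that $\int \langle v\rangle^k f\log^+ f$ can be controlled by the weighted Fisher information via the inequality $\int \langle v\rangle^k f\log^+ f \le \delta \int\langle v\rangle^k|\nabla\sqrt f|^2 + K_k(\delta)(1+|\log m_k|)m_k$; this is a weighted version of the classical bound relating entropy, Fisher information, and mass (a consequence of the Gagliardo–Nirenberg/log-Sobolev inequality applied to $\sqrt f$ with the weight $\langle v\rangle^{k}$ handled by Leibniz, the lower-order commutator terms $\langle v\rangle^{k-1}|v|\sqrt f\,|\nabla\sqrt f|$ absorbed by Young into $\delta\int\langle v\rangle^k|\nabla\sqrt f|^2 + C\int\langle v\rangle^{k}f$). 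Substituting this into the split of the first paragraph gives \eqref{eq:HHp}.

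The main obstacle is the weighted logarithmic-Sobolev step in \eqref{eq:HHp}: one must produce the weight $\langle v\rangle^k$ inside both the entropy term and the Fisher term simultaneously, which requires carefully handling the commutator $[\langle v\rangle^{k/2},\nabla]$ when one writes $\langle v\rangle^{k/2}\nabla\sqrt f = \nabla(\langle v\rangle^{k/2}\sqrt f) - (\nabla\langle v\rangle^{k/2})\sqrt f$ and then applies the unweighted log-Sobolev inequality to $\langle v\rangle^{k/2}\sqrt f$; the cross terms are controlled by Young's inequality at the cost of the arbitrarily small $\delta$ and a factor $m_k$, and the appearance of $|\log m_k|$ is exactly the scaling-invariant form of the log-Sobolev constant. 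Everything else — the dyadic split in $f$ and the elementary pointwise bounds $t|\log t|\lesssim t^{1-\e}$ — is routine.
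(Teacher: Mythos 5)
Your proposal is correct and follows essentially the same route as the paper's proof: the split by $\{f<1\}$ and then by $\{f\gtrless e^{-\langle v\rangle^{\varepsilon}}\}$ together with the elementary bound $x\log(1/x)\leq \tfrac{2}{e}\sqrt{x}$ for \eqref{eq:Hp}, and the Euclidean logarithmic Sobolev inequality applied to $g=\langle v\rangle^{k/2}\sqrt{f}$ with the commutator $(\nabla\langle v\rangle^{k/2})\sqrt{f}$ absorbed by Young's inequality for \eqref{eq:HHp}, are exactly the paper's ingredients. The one rough spot is your estimate on $\{f<e^{-\langle v\rangle^{\varepsilon}}\}$, where no $\sup\sqrt{f}$ detour is needed: one simply has $f(-\log f)\leq \tfrac{2}{e}\sqrt{f}<\tfrac{2}{e}\,e^{-\langle v\rangle^{\varepsilon}/2}$ there, so that contribution is bounded by $\tfrac{2}{e}\int\langle v\rangle^{k}e^{-\langle v\rangle^{\varepsilon}/2}\,\d v=:C_{k}(\varepsilon)$.
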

\begin{proof} Given $k \geq 0$, we denote by 
$$\H_{k}(f)=\int_{\R^{d}} f(v)\log f(v)\langle v\rangle^{k}\d v, \qquad \mathbf{H}_{k}(f)=\int_{\R^{d}}\langle v\rangle^{k} f(v)|\log f(v)|\d v\,.$$
We set $A=\{v \in \R^{d}\,,\,f(v) < 1\}$,  $A^c=\{v \in \R^{d}\,,\,f(v) \geq 1\}$ so that
\begin{equation*}
\begin{split}
\mathbf{H}_{k}(f)&=\int_{A^{c}} f(v)\log f( v)\langle v\rangle^{k}\d v- \int_{A } f( v)\log f( v)\langle v\rangle^{k}\d v=\H_{k}(f) -2\int_{A} f(v)\log f(v)\langle v\rangle^{k}\d v\\
&=\H_{k}(f) + 2 \int_{A} f(v)\log \left(\frac{1}{f(v)}\right)\langle v\rangle^{k}\d v \,.
\end{split}
\end{equation*}
Given $\varepsilon >0$, set now $B =\{v \in \R^{d}\,;\,f( v) \geq \exp(-\langle v\rangle^{\varepsilon})\}$. If $v \in A  \cap B$, then $\log(\tfrac{1}{f(v)}) \leq \langle v\rangle^{\varepsilon}$ and
\begin{equation*}
\mathbf{H}_{k}(f)\leq \H_{k}(f) + 2m_{k+\varepsilon}+ 2 \int_{A \cap B^c} f( v)\log \left(\frac{1}{f( v)}\right)\langle v\rangle^{k}\d v.
\end{equation*}
Now, since $x\log(1/x) \leq \frac{2}{e} \, \sqrt{x}$ for any  $x \in (0,1)$, we get
$$\int_{A\cap B^c} f(v)\log \left(\frac{1}{f(v)}\right)\langle v\rangle^{k}\d v  \leq \frac{2}{e}\int_{\R^{d}}\exp\left(-\frac{\langle v\rangle^{\varepsilon}}{2}\right)\langle v\rangle^{k}\d v=:C_{k}(\varepsilon) < \infty,$$
which gives \eqref{eq:Hp}. Now, setting $g^{2}(v)=\langle v\rangle^{k}f(v)$, one sees that 
$$\mathcal{H}_{k}(f)=\int_{\R^{d}}g^{2}(v)\log g^{2}(v)\d v - k \int_{\R^{d}}g^{2}(v)\log \langle v\rangle\d v \leq \int_{\R^{d}}g^{2}(v)\log g^{2}(v)\d v$$
since $\langle v\rangle \geq 1.$ We can invoke now the Euclidian logarithmic Sobolev inequality \cite[Theorem 8.14]{Lieb}
$$\int_{\R^{d}}g^{2}\log \frac{g^{2}}{\|g\|^{2}_{L^2}}\d v+d\big(1+\tfrac{1}{2}\log \delta\big)\|g\|_{L^2}^{2} \leq \frac{\delta}{\pi}\int_{\R^{d}}\left|\nabla g\right|^{2}\d v,\qquad \forall\, \delta >0$$
to obtain, observe that $\|g\|^{2}_{L^2}=m_{k}$,
$$\mathcal{H}_{k}(f) \leq \frac{\delta}{\pi}\int_{\R^{d}}\left|\nabla g\right|^{2}\d v + m_{k}\log m_{k} -d\big(1+\tfrac{1}{2}\log \delta\big)m_{k}, \qquad \forall \delta >0\,.$$
Furthermore, there exists $C_{k} >0$ such that 
$$\int_{\R^{d}}\left|\nabla g\right|^{2}\d v=\int_{\R^{d}}\left|\nabla\left( \langle v\rangle^{\frac{k}{2}}\sqrt{f(v)}\right)\right|^{2}\d v \leq C_{k}\left(\int_{\R^{d}}\langle v\rangle^{k}|\nabla \sqrt{f}|^{2}\d v + m_{k}\right)$$
from which we get the result.
\end{proof}


\begin{thebibliography}{99}



\bibitem{ABL}
\textsc{R. Alonso, V. Bagland, \& B. Lods} Convergence to self-similarity for ballistic annihilation dynamics, \texttt{https://arxiv.org/abs/1804.06192}, 2018.

\bibitem{cmpde}
\textsc{R. Alonso, J. A. Ca\~nizo, I. M. Gamba \& C. Mouhot,} A new approach to the creation and propagation of exponential moments in the Boltzmann equation, \textit{Comm. Partial Differential Equations}, \textbf{38} (2013) 155--169.

\bibitem{AlCaGa}
{\sc R. Alonso, E. Carneiro, \& I. M. Gamba,} Convolution inequalities for the Boltzmann collision operator, \textit{Com. Math. Phys.}, \textbf{298} (2010) 293--322.

\bibitem{AloGa}
\textsc{R. Alonso \& I. M. Gamba}, Gain of integrability for the Boltzmann collisional operator, \textit{Kinet. Relat. Models} {\bf 4} (2011) 41--51. 

\bibitem{alogam}
\textsc{R. Alonso, I. M. Gamba \& M. Taskovi\'{c}}, Exponentially-tailed regularity and time asymptotic for the homogeneous Boltzmann equation, \texttt{https://arxiv.org/abs/1711.06596v1}, 2017.

\bibitem{AloLo1}
\textsc{R. Alonso \& B. Lods,}  Free cooling and high-energy tails of granular gases with variable restitution coefficient, \textit{SIAM J. Math. Anal.} \textbf{42} (2010) 2499--2538.

\bibitem{bouchut}
 \textsc{F. Bouchut \& L. Desvillettes,} A proof of the smoothing properties of the positive part of Boltzmann's kernel, \emph{Revista Mat. Iberoam.} {\bf 14} (1998) 47--61.
 

 
\bibitem{CC1} 
\textsc{E.A. Carlen \& M.C. Carvalho,} Strict entropy production bounds and stability of the rate of convergence to equilibrium for the Boltzmann equation, \textit{J. Stat. Phys.}, {\bf 67} (1992) 575--608.

\bibitem{CC2}
\textsc{E.A. Carlen \& M.C. Carvalho}, Entropy production estimates for Boltzmann equations with physically realistic collision kernels. \textit{J. Stat. Phys.}, {\bf 74} (1994) 743--782.

\bibitem{CGT}
\textsc{E. A. Carlen, E. Gabetta \& G. Toscani,} Propagation of Smoothness and the Rate of Exponential Convergence to Equilibrium for a Spatially Homogeneous Maxwellian Gas, \textit{Comm. Math. Phys.} \textbf{199} (1999) 521--546. 

\bibitem{CLandau}
\textsc{K. Carrapatoso,} On the rate of convergence to equilibrium for the homogeneous Landau equation with soft potentials, \textit{J. Math. Pures Appl.}, {\bf 104} (2015) 276--310. 


\bibitem{CT}
\textsc{J. A. Carrillo \& G. Toscani}, Exponential convergence toward equilibrium for homogeneous Fokker-Planck-type equations, \textit{Math. Methods Appl. Sci.} {\bf 21} (1998), 1269--1286. 

\bibitem{Mon}
\textsc{J. A. Carrillo, A.  J\"ungel, P. A. Markowich, G. Toscani \& A. Unterreiter,} Entropy dissipation methods for degenerate parabolic problems and generalized Sobolev inequalities, \textit{Monatsh. Math.} 133 (2001), 1?-82

\bibitem{DVLandau}
\textsc{L. Desvillettes \& C. Villani}, On the spatially 
homogeneous  Landau equation for hard potentials. Part I : existence, 
uniqueness and smoothness. {\it Comm. Partial Differential Equations}, 
{\bf  25} (2000), 179--259. 


\bibitem{DVLandau2}
\textsc{L. Desvillettes \& C. Villani}, On the spatially 
homogeneous  Landau equation for hard potentials. Part II : H theorem and applications. {\it Comm. Partial Differential Equations}, 
{\bf 25} (2000), 261--298.

\bibitem{DLandau}
\textsc{L. Desvillettes} Entropy dissipation estimates for the Landau equation: general cross sections. In: Gon\c calves P., Soares A. (eds) \textbf{From Particle Systems to Partial Differential Equations III} Springer Proceedings in Mathematics \& Statistics, vol 162. Springer, Cham.

\bibitem{Fisher}
\textsc{R. A. Fisher,} Theory of statistical estimation, \textit{Proc. Cambridge Philos. Soc.}, \textbf{22} (1925) 700--725.

\bibitem{Lieb}
\textsc{E. Lieb \& M. Loss,} \textbf{Analysis,} Graduate Studies in Mathematics 14, AMS, 2001.


\bibitem{MoVi}
\textsc{C. Mouhot \& C. Villani}, Regularity theory for the spatially homogeneous Boltzmann equation with cut-off, \textit{Arch. Rational Mech. Anal.} {\bf 173} (2004) 169--212.

\bibitem{pulwen}
\textsc{A. Pulvirenti \& B. Wennberg}, A Maxwellian lower bound for solutions to the Boltzmann equation, \textit{Comm. Math. Phys.} {\bf 183} (1997) 145--160. 

\bibitem{Toscani}
\textsc{G. Toscani}, Strong convergence in $L^p$ for a spatially homogeneous Maxwell gas with cut-off, \textit{Transp. Theory Stat. Phys.}, \textbf{24} (1995) 319--328.

\bibitem{TVLandau}
\textsc{G. Toscani \& C. Villani,} On the trend to equilibrium for some dissipative systems with slowly increasing a
prior bounds, \textit{J. Statist. Phys.,} {\bf 98} (2000) 1279--1309.

\bibitem{VLandau}
\textsc{C. Villani,} On the spatially homogeneous Landau equation for Maxwellian molecules, \textit{Math. Models Methods Appl. Sci.,} {\bf 8} (1998)  957--983.

\bibitem{VLandau1}
\textsc{C. Villani,} Decrease of the Fisher information for solutions of the spatially homogeneous Landau equation with Maxwellian molecules, \textit{Math. Models Methods Appl. Sci.} {\bf 10} (2000), 153-?161.

\bibitem{villa}
\textsc{C. Villani}, Fisher information estimates for {B}oltzmann's collision operator, \emph{J. Math. Pures Appl.}, {\bf 77} (1998) 821--837.




\bibitem{vill}
\textsc{C. Villani}, Cercignani's conjecture is sometimes true and always almost true, \textit{Comm. Math. Phys.} {\bf 234} (2003) 455--490.

\bibitem{Wenn}
\textsc{B. Wennberg,} Entropy dissipation and moment production for the
Boltzmann equation, \textit{J. Stat. Phys.} \textbf{86} (1997) 1053--1066. 

\end{thebibliography}
\end{document}